\newtheorem{theorem}{Theorem}[section] 
\newtheorem{lemma}[theorem]{Lemma}     
\newtheorem{proposition}[theorem]{Proposition}
\theoremstyle{definition}
\newtheorem{example}[theorem]{Example}
\title[Quotient graphs and random walks]
{Eichler orders, quotient graphs and random walks}
\author{Luis Arenas-Carmona}
\author{Marco Godoy}
\newcommand\classgp{\mathfrak{g}}
\newcommand\anything{*}
\newcommand\sgn{\textnormal{sgn}}
\newcommand\diag{\textnormal{diag}}
\newcommand\disc{\textnormal{disc}}
\newcommand\ad{\mathbb{A}}
\newcommand\Z{\mathbb{Z}}
\newcommand\lu{\mathfrak{L}}
\newcommand\compleji{\mathbb{C}}
\newcommand\alge{\mathfrak{A}}
\newcommand\tr{\textnormal{ tr}}
\newcommand\la{\Lambda}
\newcommand\ideala{\mathcal{A}}
\newcommand\idealb{\mathcal{B}}
\newcommand\bark{\bar{k}}
\newcommand\uno{\{1\}}
\newcommand\quadc{k^*/(k^*)^2}
\newcommand\Ba{\mathfrak B}
\newcommand\oink{\mathcal O}
\newcommand\D{\mathcal D}
\newcommand\nuD{\nu_{\D}}
\newcommand\Q{\mathbb Q}
\newcommand\enteri{\mathbb Z}
\newcommand\gal{\mathcal G}
\newcommand\dete{\textnormal{det}}
\newcommand\ovr[2]{#1|#2}
\newcommand\vaa{\longrightarrow}
\newcommand\pws{\mathcal{P}}
\newcommand\Da{\mathfrak{D}}
\newcommand\Ha{\mathfrak{H}}
\newcommand\Ea{\mathfrak{E}}
\newcommand\Ra{\mathfrak{R}}
\newcommand\Fa{\mathfrak{F}}
\newcommand\Ma{\mathfrak{M}}
\newcommand\ma{\mathfrak{m}}
\newcommand\Txi{\lceil}
\newcommand{\red}[1]{{\color{red}#1}}
\newcommand{\green}[1]{{\color{teal}#1}}
\newcommand{\blue}[1]{{\color{cyan}#1}}
\newcommand{\orange}[1]{{\color{orange}#1}}
\newcommand\bmattrix[4]{\left(\begin{array}{cc}#1&#2\\#3&#4\end{array}\right)}
\newcommand\sbmattrix[4]{\textnormal{\scriptsize$\left(\begin{array}{cc}#1&#2\\#3&#4\end{array}\right)$\normalsize}}
\newcommand\blattice[4]{\left<\begin{array}{cc}#1&#2\\#3&#4\end{array}\right>}
\newcommand\bspace[4]{\left[\begin{array}{cc}#1&#2\\#3&#4\end{array}\right]}
\newcommand\wq{\mathfrak{q}}
\newcommand\uink{\mathcal U}
\newcommand\qunitaryomega{\uink}
\newcommand\quniti[4]{\rm{SU}^{#4}}
\newcommand\qunitarylam{\quniti nkh{\la}}
\newcommand\spinomega{Spin_{n}(Q)}
\newcommand\hspinomega{Spin_{n}(\D,h)}
\newcommand\matrici{\mathbb{M}}
\newcommand\finitum{\mathbb{F}}
\newcommand\splitt[1]{\matrici_2(#1)}
\newcommand\splitk{\splitt k}
\newcommand\Hgot{\mathfrak{H}}
\newcommand{\powerset}{\raisebox{.15\baselineskip}{\Large\ensuremath{\wp}}}
\begin{document}
\maketitle

\begin{abstract}
We study the extent to which the quotient of the Bruhat-Tits tree
at one place $Q$, associated to a genus of orders of maximal rank,
can be computed from the analogous quotient at a different 
place $P$.
We show that this computation can be carried out, except for a
small set of vertices depending on $P$, but not on $Q$.
We give some geometrical conditions on the quotient at $P$ that
ensure that this
exceptional set is empty. This generalizes the formulas from a
previous work that allow the computation of the quotient graph
at all places, for the genus of maximal orders over the
projective line. The methods presented here yield
similar results
for other genera or other curves.
\end{abstract}

\section{Introduction}\label{intro}

In all of this work $K$ is a global function field, 
i.e. the field of rational functions on a smooth irreducible
projective curve $X$ over a finite field $\finitum$. 
We let $\oink_X$ denote the structure sheaf of $X$, 
and we let $|X|$
be the set of closed points in $X$. Furthermore,
we write $\alge=\mathbb{M}_2(K)$ for the $2$-by-$2$ matrix algebra, while $\Ra$, $\Da$ and $\Ea$ denote
$X$-orders of maximal rank in $\alge$, i.e. sheaves of $\oink_X$-algebras whose generic fiber is $\alge$.
Throughout this work, $K_P$ is the completion of $K$ for the valuation defined by the place $P\in|X|$.
 A similar convention applies to completions of vector spaces, algebras, and orders. 
For example $\alge_P= \mathbb{M}_2(K_P)$.
The completion $\Da_P$ of an order $\Da$ in $\alge$ is defined as the closure in $\alge_P$ of the ring
$\Da(U)\subseteq K$, for any affine open set $U$ of $X$ containing $P$.
This definition is independent of the choice of $U$. 
Recall that the adele ring of an algebra is the restricted topological
product
\begin{equation}\label{rtp}
\alge_{\ad}=\left\{a=(a_P)_P\in\prod_{P\in|X|}\alge_P
\Big|a_P\in\Da_P\textnormal{ for almost all }P\right\}.
\end{equation}
  This definition is independent of $\Da$. The global algebra $\alge$
is identified with the image of the diagonal embedding $\mathrm{diag}:\alge\rightarrow\alge_\ad$
defined by $\mathrm{diag}(a)=(a)_P$.
Similarly, we define the adele ring $\ad=K_\ad$, replacing  
$\Da$  in (\ref{rtp}) by the order $\oink_X$.
The group $J_X=\ad^*$ is called the idele group of $K$ in the sequel.
 More generally, we write $R^*$ for the group of units of an arbitrary ring $R$.

Two orders $\Da$ and $\Ea$ are said to be locally conjugate,
 or to belong to the same genus, if any of the following equivalent statements hold:
\begin{enumerate}
\item The stalks $\Da_P^o,\Ea_P^o\subseteq\alge$ are conjugates at every place $P\in|X|$. 
\item The completions $\Da_P,\Ea_P\subseteq\alge_P$ are conjugate at every place $P\in|X|$. 
\item There is an element $a\in\alge_{\ad}^*$ satisfying $a_P\Da_P a_P^{-1}=\Ea_P$
at every place $P$.
\end{enumerate}
When $(3)$ is satisfied we write simply $a\Da a^{-1}=\Ea$ by abuse of notation. Similar conventions 
apply to other linear algebraic groups acting on lattices.
Being locally conjugate is an equivalence relation.
As usual (see \cite{abelianos} or \cite{cqqgvro}), by a genus of orders of maximal rank, we mean an equivalence class
for this relation.
The study of genera is an important part of the theory of orders, since local classification is much simpler
than global classification. On the other hand, over a Dedekind domain, like $A=\oink_X(U)$ for an affine set
$U\subseteq X$, two $A$-orders in $\alge$, 
like $\Da(U)$ and $\Ea(U)$, are conjugate precisely when they
are in the same spinor genus. Two $X$-orders in a genus are said to be in the same spinor genus
if the adelic element $a$ in (3) above can be chosen of the form $a=bc$, where $b\in\alge$,
 and the local coordinate $c_P$ of $c=(c_P)_P$ has trivial reduced norm at each place $P$.
Spinor genera of $A$-orders are defined analogously. 

Determining whether two $X$-orders in the same spinor genus are conjugate
can be studied through classifying graphs. Fix a place $P\in |X|$, let $U=\{P\}^c\subset X$ be its complement,
and set $A=\oink_X(U)$ as before.
Fix a genus $\mathbb{O}$ whose orders are maximal at $P$. The classifying graph $C_P(\mathbb{O})$ is a graph,
with a vertex for each conjugacy class in  $\mathbb{O}$, whose connected
components $C_P(\Da)$ are in correspondence with the conjugacy classes of $A$-orders of the form $\Da(U)$ for 
$\Da\in\mathbb{O}$. Furthermore, each connected component is the quotient of the Bruhat-Tits tree 
$\mathfrak{t}(K_P)$ for $\mathrm{PSL}_2(K_P)$ by the 
normalizer in $\mathrm{PSL}_2(K)$ of the 
corresponding $\oink_X(U)$-order $\Da(U)$. 
Fix one such $A$-order $\Da(U)$.
For any local maximal order $\Da'(P)\subseteq\alge_P$,
there is an order $\Da'\in \mathbb{O}$ satisfying
$\Da'(U)=\Da(U)$ and $\Da'_P=\Da'(P)$.
Such orders are called $P$-variants of $\Da$.
The vertices in $C_P(\Da)$ are in correspondence with the conjugacy classes of these $P$-variants.

Computing the classifying graph for a given genus $\mathbb{O}$
at a single place $P$ is sufficient for the purpose of  
determine all
conjugacy classes. There are, however, additional reasons to compute these classifying graphs at different places:
\begin{enumerate}
\item Basse-Serre' Theory  allows us to determine generators for the 
associated arithmetic subgroups (c.f. \cite[\S5]{trees}).
\item Genera of Eichler orders can be studied in terms of genera of simpler 
Eichler orders via suitable quotient graphs (c.f. \cite{ogcseooff}
and \cite{bravo1}).
\item Selectivity of certain suborders is related to the structure of the graphs in several ways (c.f. \cite{cqqgvro}, \cite{ruidqo}). 
\end{enumerate}
For these reasons, it would be desirable to be able to compute the classifying graph $C_P(\mathbb{O})$ in terms
of the corresponding graph $C_Q(\mathbb{O})$ at a different place. 
In \cite{cqqgvro}, we were able to compute
all quotient graphs for maximal orders over the projective line 
$\mathbb{P}^1$ using this idea. 
An alternative approach, for the same curve and genus appears 
in \cite{Kohl}. In general terms, the strategy used in the 
former reference can be summarized as follows:
\begin{itemize}
    \item From the classifying graph $C_P(\mathbb{O})$
    we compute a ``neighborhood matrix $N_P(\mathbb{O})$''.
    \item The neighborhood matrix $N_Q(\mathbb{O})$, at a different
    place, can be computed from $N_P(\mathbb{O})$.
    \item The neighborhood matrix $N_Q(\mathbb{O})$ can be used
    to obtain the quotient graph $C_Q(\mathbb{O})$.
\end{itemize}
Above, the neighborhood matrix $N_P(\mathbb{O})=(m_{i,j})_{i,j}$ 
is an infinite matrix whose entry
$m_{i,j}$ is the number of  neighbors in 
$\mathfrak{t}(K_P)$ of $\Da_j$ that are
isomorphic to $\Da_i$, for some set $\{\Da_1,\Da_2,\dots\}$
of representatives of all conjugacy classes in $\mathbb{O}$.  
Our ability to compute the 
classifying graph from the corresponding neighborhood matrix 
depends on our understanding
on the action of the stabilizer of a vertex in its neighbor set. 
This is simple for some vertices, namely those
corresponding to split orders, as defined in Proposition \ref{p41},
to the point where we can
give explicit formulas for the number of edges joining two 
vertices from the corresponding coefficient $m_{i,j}$
and conversely, but potentially much harder for vertices 
corresponding to non-split orders (c.f. \cite[\S6]{cqqgvro}).
 In this work we concentrate in the central step, 
 i.e., computing one neighborhood matrix in terms of another.  
With this tool alone, we can compute which pairs of vertices are 
neighbors, but not the multiplicity of edges,
and we cannot distinguish half edges from loops (see \S2). 
We note, however, that our neighborhood
matrix encodes the same information that the weighted graph for 
Hecke operators described in \cite{Alvetall},
where we also find some computations for
the case of orders in a three dimensional
matrix algebra.
These Hecke operators play a role in Drinfeld and Lafforgue' proof of  Langland's correspondence,
see \cite[\S3]{Frenkel} for an account on that connection.

The present work can be seen as a partial extension
of the computations in \cite[\S6]{Alvetall}.
We say ``partial'' mainly for the following
reason:
\begin{quote}
Sometimes there is a small set of conjugacy classes 
for which our methods cannot be applied to compute
the weights of the edges joining them, 
so a full computation requires
another method, as the ones used in \cite{Alvetall}.
However, such exceptional set depends only on the
initial place $P$ and can be bounded by an examination
of the geometry of the quotient graph at $P$.
In any case, our method reduces significantly the effort
needed to obtain the full graph.
\end{quote}

 In \cite{cqqgvro} we were able to compute the neighborhood matrix, at all places, for the genus 
$\mathbb{O}_0$ of maximal orders over the  projective line $X=\mathbb{P}^1$. There are two facts that allowed us
 to perform these computations:
\begin{enumerate}
\item The neighborhood matrices $N_P(\mathbb{O})$ and $N_Q(\mathbb{O})$ at different places commute.
\item The graph $C_Q(\mathbb{O})$ is well understood beyond a finite set
$T=T(\mathbb{O},Q)$.
\end{enumerate}
It would be useful to be able to extends this method to a larger family of genera. In order to do this we need to
extend the description of the graph outside a finite set, a result proved by Serre in \cite{trees} for
the genus $\mathbb{O}_0$ of maximal orders.  We can achieve this, at this point, for Eichler orders of multiplicity free level,
although sometimes we can extend it to Eichler orders
whose level has small multiplicities. The meaning of this 
condition should  be clear in the examples.

In this work, we show that conditions (1) and (2) above, plus the knowledge of a single matrix $N_P(\mathbb{O})$,
  is sufficient to compute the neighborhood  matrix $N_Q(\mathbb{O})$ at any other place $Q$, 
except for the coefficients $m_{i,j}$ with $\Da_i$ and $\Da_j$ in some finite set $S=S(\mathbb{O},P)$ of vertices 
depending only on $P$. 
Since the classifying graph is defined only for places $P$ 
in the cofinite set
\begin{equation}\label{piq}
\Pi_{\mathbb{O}}=\left\{P\in|X|\Big|\Da_P\textnormal{ is maximal
for some (and therefore any) }\Da\in\mathbb{O}\right\},
\end{equation}
 we assume throughout that $P,Q\in\Pi_{\mathbb{O}}$. 
 We give sufficient conditions for the set $S$ to be empty, in terms of the geometry of the classifying graph $C_P(\mathbb{O})$. This is to be expected in some cases, e.g., when $\mathbb{O}=\mathbb{O}_0$, $X=\mathbb{P}^1$
and $\mathrm{deg}(P)=1$, which is precisely the main  result in \cite{cqqgvro}. However, we provide a few examples
where this is not the case. This holds in particular when the classifying graph $C_P(\mathbb{O})$, and the 
corresponding matrix $N_P(\mathbb{O})$, have a symmetry that coincide with the identity outside a finite subgraph.
See \S\ref{sec6} for details. When $S$ is not empty, our method can probably complemented with Hall algebra
computations as in \cite{Alvetall}, but the number of such computations needed is greatly reduced.

In \cite{Kohl}, all quotient graphs for the case where $X=\mathbb{P}^1$ and $\mathbb{O}=\mathbb{O}_0$
are described via simultaneous actions at different places. Their method requires the ring 
$A=\oink_X\Big(X-\{P\}\Big)$ to be a PID, as it is pointed out in the reference. We rely mostly on the geometry of the 
graph, which makes our result more general. 

\section{A few conventions on graphs}

 In all this work, we use the following notations concerning graphs: A graph $\mathfrak{g}$ is a $5$-tuple 
$\mathfrak{g}=(V,A,s,t,r)$ consisting on the following data:
\begin{itemize} \item Two disjoint sets $V=V_\mathfrak{g}$ and $A=A_\mathfrak{g}$,
 called the vertex set and the edge set of $\mathfrak{g}$.
\item Two maps $s,t:A\rightarrow V$ called source and target.
\item A map  $r:A\rightarrow A$ is called the reverse.
\end{itemize}
 They are assumed to satisfy the following compatibility properties:
$$r\big(r(a)\big)=a,\quad r(a)\neq a,\quad s\big(r(a)\big)=t(a).$$
In particular, we admit multiple edges and loops. 
The number of edges $a$ satisfying $s(a)=v$, for a fixed vertex
$v$ is called the valency of $v$. Valency one vertices are 
called leaves, although this convention is mostly used for
trees, as defined below.
If there is at most one edge $a$ satisfying $s(a)=v$ and 
$t(a)=w$ for each pair
of vertices $(v,w)$ the graph is called reduced.
An important example of reduced graph is the generic ray 
$\mathfrak{c}$ with 
vertex and edge sets given by
$V=\{v_0,v_1,\dots\}$ and $A=\{a_0,\bar{a}_0,a_1,\bar{a}_1,\dots\}$,
where
$$r(a_i)=\bar{a}_i,\qquad s(a_i) =v_i, \qquad t(a_i) =v_{i+1}.$$

 A morphism of graphs
$\delta:\mathfrak{g}\rightarrow \mathfrak{g}'$, where $\mathfrak{g}'=(V',A',s',t',r')$,
 is a pair of maps $\delta_V:V\rightarrow V'$ and $\delta_A:A\rightarrow A'$ satisfying
$s'\circ \delta_A=\delta_V\circ s$,
$t'\circ \delta_A=\delta_V\circ t$  and $r'\circ \delta_A=\delta_A\circ r$.
There is a category called $\mathfrak{Graphs}$, whose objects
are graphs in our sense, and its morphisms are those just
described. A ray is any isomorphic image, in $\mathfrak{Graphs}$,
of the generic ray.
A subgraph $\mathfrak{h}$ of a graph $\mathfrak{g}$ is a graph admitting a morphism 
$\iota:\mathfrak{h}\rightarrow\mathfrak{g}$ for which $\iota_V$ and $\iota_A$ are inclusions.

The category $\mathfrak{Graphs}$ has coproducts and fibered coproducts,
but not products.
The only fibered coproduct that we use in the sequel is the attachment of a graph $\mathfrak{e}$ 
with a distinguished initial vertex $v_0$  to a graph 
$\mathfrak{g}$ at a vertex $v\in V_{\mathfrak{g}}$,
 which is defined as the fibered coproduct of $\mathfrak{g}$ and $\mathfrak{e}$
with respect to the maps $\gamma:\mathfrak{p}\rightarrow \mathfrak{g}$ and  
$\delta:\mathfrak{p}\rightarrow \mathfrak{e}$, where
$\mathfrak{p}=(\{p\},\varnothing,\varnothing,\varnothing,\varnothing)$, $\gamma_V(p)=v$ and
$\delta_V(p)=v_0$. The attached graph  $\mathfrak{e}$ is usually a ray.
For example, the real line graph is the graph
obtained by attaching to rays to each other
by their initial vertex.
In the sequel, we often consider a finite graph with finitely many
attached rays called cusps. Figure \ref{sdoq} in \S3 shows a graph 
with a few cusps.

A graph is connected if it is not isomorphic to the coproduct of two non-empty graphs.
 The interval $\mathfrak{i}_n$ is defined as the smallest connected
subgraph of $\mathfrak{c}$ containing both $v_0$ and $v_n$.
 A path of length $n$ is an injective
 morphism of graphs $\gamma:\mathfrak{i}_n\rightarrow \mathfrak{g}$. A non-necessarily injective
morphism $\mu:\mathfrak{i}_n\rightarrow \mathfrak{g}$ is called a promenade. The integer
$n$ is called the length of the path or promenade. 
We usually emphasize the initial vertex $w_0=\mu_V(v_0)$ and the final vertex $w_n=\mu_V(v_n)$
by saying a promenade (or path) from $w_0$ to $w_n$ or between $w_0$ and $w_n$.
A graph $\mathfrak{g}$ is connected if and only if there is a 
promenade between every two vertices in $V_\mathfrak{g}$. For every two vertices $v$ and $w$
in a connected graph, the distance $d(v,w)$ is the minimal length of a promenade from $v$ to $w$.
A promenade, from $v$ to $w$, of minimal length is a path. Similarly, an injective
 morphism of graphs $\gamma:\mathfrak{c}\rightarrow \mathfrak{g}$ is called an infinite path, while
a non-necessarily injective morphism 
$\mu:\mathfrak{c}\rightarrow \mathfrak{g}$ is called an infinite promenade. 
For any infinite path $\gamma:\mathfrak{c}\rightarrow \mathfrak{g}$, we say that 
$\gamma_V(v_{n+1})$ is the vertex following 
$\gamma_V(v_n)$. This is used in particular for 
cusps. You can also define maximal paths or
promenades using the real line graph instead of
a ray. Details are left to the reader.
If a graph has a unique path from any
vertex $v$ to any vertex $w$, it is called a tree. The image of a maximal path, in a tree,
is call a full geodesic.

Let $G$ be a group.
 By a $G$-action by graph automorphisms,  on a graph $\mathfrak{g}$, 
 we mean an action of $G$ on both sets $A_\mathfrak{g}$ and 
 $V_\mathfrak{g}$ preserving the maps $s$, $t$ and $r$.
 With this definition, there is a well defined quotient graph 
 for the $G$-action provided that $a$ and $r(a)$ are in 
 different orbits for every edge $a$. In this case we say that 
 the action has no inversions.
Since the actions that concern us here typically 
have inversions, we define
quotient graphs in a slightly different way that allows them.
We define the barycentric subdivision $\mathfrak{g}'=(V'A',s',t',r')$ by
$$V'=V\cup A,\quad A'=\{0,1\}\times A,\qquad r(0,a)=\big(1,r(a)\big),\quad 
 r(1,a)=\big(0,r(a)\big),$$
$$s(0,a)=s(a),\quad 
 s(1,a)=a,\quad t(1,a)=t(a),\quad 
 t(0,a)=a.$$

Any action of a group $G$ on $\mathfrak{g}$ defines an action without inversions on $\mathfrak{g}'$,
 so the quotient graph $G\backslash\mathfrak{g}'$ is always defined.
We call $G\backslash\mathfrak{g}'$ the fine quotient of $G$ and denote it 
$G\backslash\backslash\mathfrak{g}$.
 For actions without inversions the fine quotient is just the barycentric
subdivision of the quotient. More generally, in the fine quotient graph, the elements of $V\subseteq V'$
 are called actual vertices, while the elements of $A\subseteq V'$ are called virtual
vertices. In the fine quotient we say an actual edge for a  length-$2$ path 
$\gamma:\mathfrak{i}_2\rightarrow
G\backslash\backslash\mathfrak{g}$ 
satisfying both $\gamma_A(a_0)=(0,a)$ and $\gamma_A(a_1)=(1,a)$ 
for some $a\in A$, while the actual distance between two actual vertices is one half of their usual graph distance.
 In particular, actual vertices are actual neighbors if the actual distance between them is $1$.
We use a few other conventions along these lines, so that, aside of a few words \emph{actual},
there is no difference between a statement on the quotient graph an its equivalent on
 the fine quotient graph for actions without inversions. By abuse of notations, we ignore the
word actual if there is no risk of confusion, which is usually the case since we make almost no use
of standard quotients, the exceptions being the formula (\ref{doubleqt}) below and some computations in \S\ref{intro}.
We also ignore virtual vertices in notations and drawings, except for the following
exception: An inversion produce a virtual vertex of valency $1$ in the fine quotient,
 joined to a vertex by an edge called a half-edge in the sequel. The latter virtual vertex
is drawn explicitly, and denoted by "$*$" in all figures. More generally, any bipartite graph
is called a fine graph whenever vertices on one side of the partition are called virtual vertices, and their
valencies are either $1$ or $2$. In this case, all of the preceding conventions apply.

 A multiple edge on a graph is a pair of vertices 
$(v,w)$ with more than one edge $a$ satisfying
$s(a)=v$ and $t(a)=w$. The analogous definition in the fine quotient graph goes as follows:
A multiple edge in the fine quotient graph is  a pair of actual vertices $(v,w)$ with more than one actual
edge from $v$ to $w$. The pair $(v,v)$ is considered a multiple
edge if there is, at least, either an actual  loop
or more than one half edge starting at $v$. For example, $(v_1,v_1)$ and $(v_2,v_3)$ are multiple
edges in Figure \ref{F2}, but $(v_4,v_4)$  is not, as there is only a  half edge at $v_4$.  If a fine quotient 
has no multiple edges, in this sense, we call it reduced. Reduced fine quotients can have half edges 
but not loops. Note that every fine quotient has a reduction, i.e., an associated reduced fine graph
with the same neighborhood relation
between actual vertices.

\begin{figure}
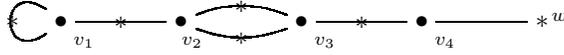

\[  \xygraph{
!{<0cm,0cm>;<.8cm,0cm>:<0cm,.8cm>::}
!{(0,1)}*+{\bullet}="a" !{(0.4,0.6)}*+{{}^{v_1}}="a1"
!{(2,1)}*+{\bullet}="b" !{(2.2,0.6)}*+{{}^{v_2}}="b1"
!{(4,1)}*+{\bullet}="c" !{(4.4,0.6)}*+{{}^{v_3}}="c1"
!{(6,1)}*+{\bullet}="x" !{(6.4,0.6)}*+{{}^{v_4}}="x1"
!{(1,0.975)}*+{*}="l1" !{(3,1.25)}*+{*}="l2"
!{(3,0.725)}*+{*}="l3" !{(5,.975)}*+{*}="l4"
!{(-0.805,1)}*+{*}="l5"
!{(8,1)}*+{*}="d" !{(8.3,1)}*+{{}^{w}}="d1"
"a"-@(lu,ld) "a" "a"-"b" "b"-@/^/"c" "b"-@/_/"c" "c"-"x" "x"-"d"} 
\]
\caption{A fine graph with a loop at $v_1$, a double edge between $v_2$ and $v_3$, and a half edge at $v_4$.
In latter pictures, the virtual vertices, which are denoted by $*$, are ignored unless they have valency 1,
 as it is the case for $w$.}\label{F2}
\end{figure}

A simplicial map $\gamma: \mathfrak{g}\rightarrow\mathfrak{g}'$ 
is called
surjective if both $\gamma_V$ and $\gamma_A$ are surjective. 
It is called
locally surjective at a vertex $v$ if the induced map
$\mathrm{nb}_\gamma(v)$
 from the set $\mathrm{nb}_{\mathfrak{g}}(v)$, of neighbors of $v$, 
to $\mathrm{nb}_{\mathfrak{g}'}\big(\gamma_V(v)\big)$ is surjective. 
It is called locally
surjective, or LC, if it is locally surjective at every vertex.
A ramified covering is a surjective LC simplicial map. 
A vertex $v$ is unramified if
$\mathrm{nb}_\gamma(v)$ is injective, and ramified otherwise. If $N\subseteq G$ are groups
acting on a graph $\mathfrak{g}$, 
there is a natural ramified covering $c:N\backslash\backslash\mathfrak{g}\rightarrow 
G\backslash\backslash\mathfrak{g}$.
Furthermore, if $N$ is normal in $G$, then $G/N$ acts without inversions on the fine quotient
$N\backslash\backslash\mathfrak{g}$,
and the corresponding quotient is
\begin{equation}\label{doubleqt}
  (G/N)\backslash(N\backslash\backslash\mathfrak{g})\cong G\backslash\backslash\mathfrak{g}.
\end{equation}
When this is the case, the covering is said to be regular.
In this case, a vertex $v$ is ramified precisely when its stabilizer
$\mathrm{Stab}_{G/N}(v)$ acts non-trivially on its neighbors.  

We need one final piece of notation for quotient graphs. 
Recall that the weight $m_{v,w}$ is the number of neighbors
of one (any) pre-image of $v$, in $\mathfrak{g}$, whose image is $w$.
The Weighted Reduced fine quotient
(WRFQ) of $\mathfrak{g}$ by $G$, denoted $G|\mathfrak{g}$, is the reduction of the fine quotient
$G\backslash\backslash\mathfrak{g}$, together with the 
data of the weights $m_{v,w}$, for all pairs of vertices
$(v,w)$ of $G\backslash\backslash\mathfrak{g}$. 
These are the quotients that our tools
allows us to compute directly. For instance, the WRFQ in Figure \ref{F3} does not allows us to discriminate
if the fine quotient $G\backslash\backslash\mathfrak{g}$
have a loop at $v_1$ (as it is the case in Fig. \ref{F2}), or two half edges. The only thing we can say for sure
if we only have the WRFQ is that the multiplicity of the (actual) edge joining $v$ and $w$ in
$G\backslash\backslash\mathfrak{g}$ is bounded by $\min\{m_{v,w},m_{w,v}\}$. Note that,
in this regard, loops count as edges of multiplicity $2$, so we know there is no loop, only a half edge,
at $v_4$. These weighted graph are those computed in \cite{Alvetall}.

\begin{figure}
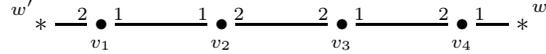

\[  \xygraph{
!{<0cm,0cm>;<.8cm,0cm>:<0cm,.8cm>::}
!{(-1,1)}*+{*}="u" !{(-1.3,1.2)}*+{{}^{w'}}
!{(0,1)}*+{\bullet}="a" !{(0,0.6)}*+{{}^{v_1}}
!{(2,1)}*+{\bullet}="b" !{(2,0.6)}*+{{}^{v_2}}
!{(4,1)}*+{\bullet}="c" !{(4,0.6)}*+{{}^{v_3}}
!{(6,1)}*+{\bullet}="x" !{(6,0.6)}*+{{}^{v_4}}
!{(7,1)}*+{*}="d" !{(7.3,1.2)}*+{{}^{w}}
!{(6.3,1.1)}*+{{}^{1}} !{(5.7,1.1)}*+{{}^{2}}
!{(4.3,1.1)}*+{{}^{1}} !{(3.7,1.1)}*+{{}^{2}}
!{(2.3,1.1)}*+{{}^{2}} !{(1.7,1.1)}*+{{}^{1}}
!{(.3,1.1)}*+{{}^{1}} !{(-.3,1.1)}*+{{}^{2}}
"a"-"u" "a"-"b" "b"-"c" "c"-"x" "x"-"d"} 
\]
\caption{One possible WRFQ $G|\mathfrak{t}$ corresponding to a fine quotient 
$G\backslash\backslash\mathfrak{g}$ isomorphic to the fine graph
in Figure \ref{F2}. No virtual vertices are depicted except for the leaves at the half edges. If this picture represents a fine
quotient of a tree $\mathfrak{t}$ where
every vertex has valency $2$,
the weights indicate
that the only ramified (actual) vertices in the map $\mathfrak{t}^{[1]}\twoheadrightarrow
 G\backslash\backslash\mathfrak{g}$ are the 
 pre-images of $v_4$, where a single edge in Figure \ref{F2} corresponds to an edge having wheight $2$ in this picture.}\label{F3}
\end{figure}

\section{Statement of the main results}

In all of this section we let $X$, $K$, $\oink_X$, and $\alge=\matrici_2(K)$ be as defined at the beginning of 
\S1. Let $\Da$ be an
$X$-order of rank $4$ in a genus $\mathbb{O}$, and let $P\in\Pi_{\mathbb{O}}$, as defined in Equation
(\ref{piq}), so that $\Da_P$
is maximal. In this work, the Bruhat-Tits tree 
$\mathfrak{t}=\mathfrak{t}(P)=(V_\mathfrak{t},A_\mathfrak{t},r,s,t)$ 
is a graph whose vertex set $V_\mathfrak{t}$ is 
the set of local maximal orders in the completion $\alge_P$. 
Two such orders $\Da_P$
and $\Da'_P$ are neighbors if and only if, in some basis, they have the form
$$\Da_P=\bmattrix {\oink_P}{\oink_P}{\oink_P}{\oink_P}\quad\textnormal{and}\quad 
\Da'_P=\bmattrix {\oink_P}{\pi_P\oink_P}{\pi_P^{-1}\oink_P}{\oink_P},$$
for some local uniformizing parameter $\pi_P$. 
This is the same definition we used in \cite{Eichler2},
\cite{cqqgvro}, \cite{ruidqo}, \cite{scffgeo} or \cite{ogcseooff}.
Structurally, $\mathfrak{t}$  is a homogeneous tree having vertices of
valency  $p+1$, where $p=|\finitum(P)|$ is the cardinality of the residue field.
 The set $V_{\mathfrak{t}}$ can also be identified as the set of $P$-variant of a single $X$-order $\Da$ in $\alge$.
A $P$-variant of $\Da$ is an order $\Da'$ satisfying the following conditions:
\begin{enumerate}
\item $\Da_Q=\Da'_Q$ for any place $Q\in |X|\smallsetminus\{P\}$.
\item $\Da'_P$ is maximal.
\end{enumerate}
If $\Da_P$ and $\Da'_P$ are neighbors we say that $\Da$ and $\Da'$ are $P$-neighbors.
Any $P$-variant of $\Da$ is contained in the genus $\mathbb{O}$. The fact that every 
vertex corresponds to an $X$-order is a consequence of the fact that orders can be defined
locally. To make this statement precise, we fix an order $\Da$ and say that a family
$\{\Da'(Q)\}_Q$ is coherent whenever $\Da_Q=\Da'(Q)$  at all places $Q\in|X|\smallsetminus S$,
for some finite set $S\subseteq |X|$. This property is independent of the choice of $\Da$,
and the main property of coherent families reads as follows:
\begin{quote} The map sending every order $\Da'$ to the family $\{\Da'_Q\}_Q$ is a bijection
between the set of orders of maximal rank, and the coherent families of local orders of maximal rank.
\end{quote}
This tells us that we can define every maximal order by fixing one such order and computing successive
$P_i$ variants  at a finite family $\{P_1,\dots,P_N\}$ of places. Furthermore, at each step the
coordinate $\Da_P'$ is defined arbitrarily.
Another consequence of this fact is that, when $\Da'$
belong to the genus of $\Da$, the element $a_Q$ satisfying $\Da'_Q=a_Q\Da_Q a_Q^{-1}$ can be assumed 
to be trivial for almost every place $Q$. In particular, $a=(a_Q)_Q$ is an adele, and we write
$\Da'=a\Da a^{-1}$. Additionally, the conjugate $a\Da a^{-1}$ is a well defined lattice in the genus of $\Da$ 
for every adele $a$, so genera can be studied as adelic orbits 
of orders. Adeles $a=(a_Q)_Q$ satisfying $a_Q=\tilde a$ for 
every place $Q$ and a fixed element $\tilde a\in \alge$  
are called diagonal adeles or global adeles, and the
map $\tilde a\mapsto a$ 
is called the diagonal embedding. 
The orders $\Da$ and $a\Da a^{-1}$ in the same genus
are conjugate if the adelic element $a$ can be assumed to be
in the image of the diagonal embedding, i.e.,
if the coordinate $a_Q$ can be chosen independently of $Q$.
In all that follows, any element $\tilde{a}\in\alge$ is identified  with the corresponding
adele $a$ as above. 

Note that  two  $P$-variants $\Da,\Da'$ are conjugates if and 
only if there is an element $a$ in the normalizer 
$N\subseteq PGL_2(K)$ of  $\Da(U)=\Da'(U)$ satisfying $a\Da_Pa^{-1}=\Da'_P$. 
In particular, conjugacy classes of such orders are in correspondence with actual vertices of the fine quotient graph
$\mathfrak{c}_P(\Da)=N\backslash\backslash\mathfrak{t}$. In the nineteen seveties, J.-P. Serre gave a precise 
description of these  quotient graphs, or more precisely a closely related graph which we call the S-graph 
$\mathfrak{s}_P(\Da)$, whenever $\Da$ is maximal
 \cite{trees}. We rephrase his description as follows:
\begin{quote}
The quotient graph $\mathfrak{s}_P(\Da)=\Gamma\backslash\mathfrak{t}$,
 where $\Gamma=\Da(U)^*$ is the group of units,
 is a combinatorially finite (CF) graph,
i.e., the graph obtained by attaching a finite
number of cusps to a finite graph $Y$ as in
Figure \ref{sdoq}(A). Furthermore, the cusps 
are indexed by the Picard group of the 
Dedekind domain 
$A=\oink_X(U)$.
\end{quote}
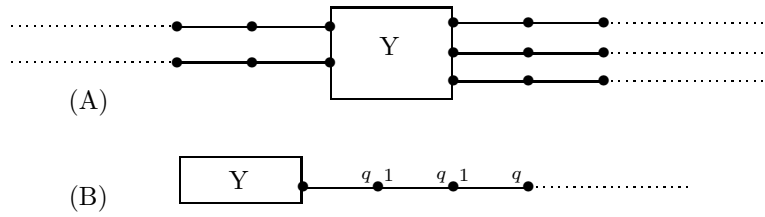
\begin{figure}
\unitlength 1mm 
\linethickness{0.4pt}
\ifx\plotpoint\undefined\newsavebox{\plotpoint}\fi 
\begin{picture}(80.25,17.5)(0,0)
\put(27.25,5.5){\framebox(16,12)[cc]{}}
\put(43.25,15.5){\line(1,0){20}}
\put(6.75,15){\line(1,0){20}}
\put(43.5,7.75){\line(1,0){20}}
\put(43.25,11.5){\line(1,0){20}}
\put(6.75,10.25){\line(1,0){20}}
\put(35,12.5){\makebox(0,0)[cc]{Y}}
\put(-5,5){\makebox(0,0)[cc]{(A)}}
\put(43.5,15.5){\makebox(0,0)[cc]{$\bullet$}}
\put(43.5,7.75){\makebox(0,0)[cc]{$\bullet$}}
\put(43.5,11.5){\makebox(0,0)[cc]{$\bullet$}}
\put(53.5,15.5){\makebox(0,0)[cc]{$\bullet$}}
\put(53.5,7.75){\makebox(0,0)[cc]{$\bullet$}}
\put(53.5,11.5){\makebox(0,0)[cc]{$\bullet$}}
\put(63.5,15.5){\makebox(0,0)[cc]{$\bullet$}}
\put(63.5,7.75){\makebox(0,0)[cc]{$\bullet$}}
\put(63.5,11.5){\makebox(0,0)[cc]{$\bullet$}}
\put(6.75,15){\makebox(0,0)[cc]{$\bullet$}}
\put(16.75,15){\makebox(0,0)[cc]{$\bullet$}}
\put(27.1,15){\makebox(0,0)[cc]{$\bullet$}}
\put(6.75,10.25){\makebox(0,0)[cc]{$\bullet$}}
\put(16.75,10.25){\makebox(0,0)[cc]{$\bullet$}}
\put(27.1,10.25){\makebox(0,0)[cc]{$\bullet$}}
\multiput(63.5,15.5)(1,0){22}{{\rule{.4pt}{.4pt}}}
\multiput(63.5,11.5)(1,0){22}{{\rule{.4pt}{.4pt}}}
\multiput(63.5,7.75)(1,0){22}{{\rule{.4pt}{.4pt}}}
\multiput(-15.25,15)(1,0){22}{{\rule{.4pt}{.4pt}}}
\multiput(-15.25,10.25)(1,0){22}{{\rule{.4pt}{.4pt}}}
\end{picture}
\unitlength 1mm 
\linethickness{0.4pt}
\ifx\plotpoint\undefined\newsavebox{\plotpoint}\fi 
\begin{picture}(80.25,17.5)(0,0)
\put(7.25,9.5){\framebox(16,6)[cc]{}}
\put(23.25,11.5){\line(1,0){30}}
\put(15,12.5){\makebox(0,0)[cc]{Y}}
\put(-5,10){\makebox(0,0)[cc]{(B)}}
\put(23.5,11.5){\makebox(0,0)[cc]{$\bullet$}}
\put(33.5,11.5){\makebox(0,0)[cc]{$\bullet$}}
\put(43.5,11.5){\makebox(0,0)[cc]{$\bullet$}}
\put(53.5,11.5){\makebox(0,0)[cc]{$\bullet$}}
\multiput(53.5,11.5)(1,0){22}{{\rule{.4pt}{.4pt}}}
\put(32,12.5){\makebox(0,0)[cc]{${}^{q}$}}
\put(35,12.5){\makebox(0,0)[cc]{${}^{1}$}}
\put(42,12.5){\makebox(0,0)[cc]{${}^{q}$}}
\put(45,12.5){\makebox(0,0)[cc]{${}^{1}$}}
\put(52,12.5){\makebox(0,0)[cc]{${}^{q}$}}
\end{picture}
\caption{In (A), an S-graph with $5$ cusps according to Serre's description.
In (B), a cusp in the WRFQ. Here $q=\sharp\mathbb{F}(P)$.}\label{sdoq}
\end{figure}
Note that $\Gamma K^*/K^*$ is a normal subgroup of $N$ of finite index, so that 
$\mathfrak{c}_P(\Da)$  is the fine quotient of $\mathfrak{s}_P(\Da)$
by the natural action of the quotient group $N/\Gamma$. If $\mathfrak{g}^{[1]}$ 
denotes the barycentric subdivision of $\mathfrak{g}$, the map
$\mathfrak{s}_P(\Da)^{[1]}\twoheadrightarrow \mathfrak{c}_P(\Da)$
 is a regular ramified covering.
In this context, the first result of the present work is an extension of Serre's result to all genera of orders
of maximal rank. It is unsurprising and probably known to experts. Furthermore, the particular case of
Eichler orders is discussed, in a slightly different language, in \cite{bravo1}, where a precise bound on the number
of cusps is given. We decided to include it here as an independent result, mainly because it is critical in what follows.  

\begin{theorem}\label{t1}
Let $\Ra$ be an order of maximal rank in 
$\alge$, and let $P$
be a place where $\Ra$ is maximal. 
Set $U=X-\{P\}$ and 
$\Gamma=\Ra(U)^*K^*/K^*$. Let $N$ be the
normalizer of the 
order $\Ra(U)$ in the projective general 
linear group $\mathrm{PGL}_2(K)$. 
  Then either quotient graph 
  $\Gamma\backslash\mathfrak{t}$ or 
  $N\backslash\backslash\mathfrak{t}$ is CF.
  Furthermore,
if  $\Ra$ is an Eichler order of 
multiplicity-free level the cusps are 
indexed by the Picard group of 
$A=\mathcal{O}_X(U)$,
and the set of orders in each cusp 
can be explicitly described
as split orders.
\end{theorem}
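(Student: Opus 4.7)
My plan is to reduce the combinatorial finiteness to Serre's theorem by enlarging $\Ra$ to a maximal envelope, and to use the rigidity of multiplicity-free Eichler orders for the identification of cusps with $\mathrm{Pic}(A)$.

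First, at each of the finitely many places $Q\neq P$ where $\Ra_Q$ is not maximal, I choose a local maximal order $\Ma_Q\supseteq\Ra_Q$; elsewhere, I set $\Ma_Q=\Ra_Q$. The coherence dictionary recalled in \S3 then produces a global $X$-order $\Ma\supseteq\Ra$ that is maximal everywhere, with $\Ma_P=\Ra_P$. Next I observe that $\Ma(U)/\Ra(U)$ is a finitely generated torsion $A$-module whose residue quotients are finite (because $A$ has finite residue fields), hence is itself finite. Picking a nonzero $f\in A$ with $f\Ma(U)\subseteq\Ra(U)$, one checks that the congruence subgroup $\Ma(U)^*\cap(1+f\Ma(U))$ lies in $\Ra(U)^*$ and coincides with the kernel of the natural map $\Ma(U)^*\to(\Ma(U)/f\Ma(U))^*$, whose target is a finite group. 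Hence $[\Ma(U)^*:\Ra(U)^*]<\infty$, and a fortiori $[\Gamma_\Ma:\Gamma]<\infty$, where $\Gamma_\Ma=\Ma(U)^*K^*/K^*$.

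The simplicial surjection $\Gamma\backslash\mathfrak{t}\twoheadrightarrow\Gamma_\Ma\backslash\mathfrak{t}$ thus has fibers of cardinality at most $[\Gamma_\Ma:\Gamma]$. Since Serre's theorem asserts that $\Gamma_\Ma\backslash\mathfrak{t}$ is CF, and any finite-sheeted simplicial cover of a CF graph is again CF---its compact core pulls back to a finite subgraph and each cusp pulls back, past finitely many vertices, to a disjoint union of finitely many rays---the graph $\Gamma\backslash\mathfrak{t}$ is itself CF. Moreover, $N/\Gamma K^*$ acts on $\Gamma\backslash\mathfrak{t}$ with finite quotient group (a manifestation of the fact that the outer automorphism group of an order of maximal rank in $\alge=\matrici_2(K)$ is finite), so $N\backslash\backslash\mathfrak{t}$ is CF as well.

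For the Eichler refinement, write $\Ra=\Ma_1\cap\Ma_2$ with $\Ma_1,\Ma_2$ a pair of maximal orders; the multiplicity-free hypothesis guarantees this pair is uniquely determined up to swap, and that locally $\Ra_Q$ is either maximal or of neighbor distance $1$ in $\mathfrak{t}(Q)$. Applying the preceding construction with $\Ma=\Ma_1$, Serre indexes the cusps of $\mathfrak{c}_P(\Ma_1)$ by $\mathrm{Pic}(A)$; along each such cusp, far from the finite core, the stabilizer preserves both $\Ma_1$ and $\Ma_2$ simultaneously, so in an adapted basis the vertices of the lifted ray are split orders of shape $\bigl(\begin{smallmatrix}A&\mathfrak{a}\\ \mathfrak{b}&A\end{smallmatrix}\bigr)$ with prescribed level product $\mathfrak{a}\mathfrak{b}$. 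The main obstacle I anticipate is verifying that this correspondence remains bijective after passing from $\mathfrak{c}_P(\Ma_1)$ to $\mathfrak{c}_P(\Ra)$---equivalently, that the Atkin--Lehner involutions interchanging $\Ma_1$ and $\Ma_2$ do not collapse distinct cusps---which is exactly where the multiplicity-freeness of the level is essential.
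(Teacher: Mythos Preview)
Your reduction to Serre's theorem via a maximal overorder $\Ma\supseteq\Ra$ and a congruence subgroup is exactly the paper's route. Two steps, however, need more than you give them.

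The assertion that a finite-sheeted simplicial cover of a CF graph is again CF is false in general: the preimage of a ray under a finite simplicial surjection need not be a disjoint union of rays. What makes it work here is the weight pattern $(q,1)$ along Serre's cusps, which forces a lift of a cusp with a prescribed initial vertex to be unique. The paper isolates this as a ``smooth on cusps'' condition and proves (Lemma~\ref{l41p}) that under it two liftings can merge at most once, so past finitely many vertices the preimage does split into rays. Your parenthetical states the right conclusion but skips this argument. Likewise, the passage to $N\backslash\backslash\mathfrak{t}$ needs the check that the finite group $N/\Gamma$, after possibly shrinking the cusps, genuinely permutes them rather than shifting along a single one; the paper carries this out explicitly by ruling out $\psi(v_k)=v_{k+t}$ with $t\neq 0$ via the finiteness of $\psi$.

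For the Eichler refinement, your anticipated obstacle is in the right neighborhood, but the paper's resolution is not Atkin--Lehner bookkeeping. It shows that for $\deg B$ large, $\Da_B$ is singled out among the maximal overorders $\{\Da_{B+D'}:0\le D'\le D\}$ of $\Ea_B=\Da_B\cap\Da_{B+D}$ as the one with the smallest ring of global sections (a Riemann--Roch count), so $\Da_B\mapsto\Ea_B$ is injective and neighbor-preserving far out on each cusp; that every cusp of the Eichler graph arises this way is then imported from Prop.~7.6 of \cite{ogcseooff}. Your sketch supplies neither ingredient, so the indexing of cusps by $\mathrm{Pic}(A)$ and their explicit description as split orders remain unestablished.
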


When seen in the WRFQ,  the cusps looks like 
the one in  Figure \ref{sdoq}(B). See Prop. 
\ref{p41} below for details.

It is customary to extend the classifying graph 
$\mathfrak{c}_P(\Ra)=
N\backslash\backslash\mathfrak{t}$ to
a non-connected graph 
$\mathfrak{c}_P(\mathbb{O})$, depending only on 
the genus $\mathbb{O}$ of $\Ra$,
defined as the coproduct in $\mathfrak{Graphs}$ 
of a finite number of graphs 
$\mathfrak{c}_P(\Ra_1),\dots,
\mathfrak{c}_P(\Ra_m)$, where 
$\Ra_1(U),\dots,\Ra_m(U)$ are
representatives of all conjugacy classes in the 
corresponding genus. See for example 
\cite{cqqgvro}.
 The advantage of this construction
is that set of actual vertices of the graph
$\mathfrak{c}_P(\mathbb{O})$ is precisely 
the set of
conjugacy classes of orders in $\mathbb{O}$, 
and therefore it is independent of the 
choice of the
place $P\in\Pi_{\mathbb{O}}$. Analogously, 
to state our main result in a simpler way, 
we define
$\mathfrak{wq}_P(\Ra)=N|\mathfrak{t}_P$, 
where $\mathfrak{t}_P$ is the Bruhat-Tits 
tree at $P$
and $N$ the normalizer of $\Ra(U)$
as above. Then we 
define the full WRFQ as the coproduct
$\mathfrak{wq}_P(\mathbb{O})=
\coprod_{i=1}^m\mathfrak{wq}_P(\Ra_i)$. 
These are weighted graphs, no graphs,
but extending the coproduct to this setting 
should cause no confusion. Now our main result 
is the following:

\begin{theorem}\label{t2b}
Let $\Ra$ be an order of maximal rank in 
$\alge$, and let $P$ and $Q$ be two places 
where $\Ra$ is maximal. 
Assume that we have the following  data:
\begin{itemize}
\item the full weighted quotient  $\mathfrak{wq}_P(\mathbb{O})$ and
\item the cusps of  $\mathfrak{wq}_Q(\mathbb{O})$.
\end{itemize}
Then, we can compute every edge, and the corresponding  weights,
of the graph
$\mathfrak{wq}_Q(\mathbb{O})$, outside a finite set of 
``bad'' pairs $(v,w)$ that depends
on  $\mathfrak{wq}_P(\mathbb{O})$, but not on $Q$.
\end{theorem}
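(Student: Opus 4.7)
The plan is to work entirely at the level of neighborhood matrices. The WRFQ $\mathfrak{wq}_P(\mathbb{O})$ is, as a piece of data, equivalent to the infinite matrix $N_P=(m_{i,j})$ indexed by the conjugacy classes of orders in $\mathbb{O}$, and similarly $\mathfrak{wq}_Q(\mathbb{O})$ determines and is determined by $N_Q$. Recovering ``every edge and the corresponding weights of $\mathfrak{wq}_Q(\mathbb{O})$'' therefore amounts to computing the entries of $N_Q$. Following the strategy of \cite{cqqgvro}, the key input is the commutation relation $N_PN_Q=N_QN_P$, which written entrywise reads
$$\sum_{k}(N_P)_{v,k}\,(N_Q)_{k,w}\;=\;\sum_{k}(N_Q)_{v,k}\,(N_P)_{k,w}\qquad\text{for every pair }(v,w).$$
Since $N_P$ is known by hypothesis, these are linear relations on the unknown entries of $N_Q$.

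I would next invoke Theorem \ref{t1} to control the shape of both graphs. Each connected component of $\mathfrak{wq}_P(\mathbb{O})$ consists of a finite core $Y_P$ with finitely many cusps attached, and Proposition \ref{p41} pins down the vertices of each cusp as explicit split orders. On a cusp of $P$, the column of $N_P$ at a vertex $v$ interior to the cusp has only two nonzero off-diagonal entries, at the successor $v^+$ and predecessor $v^-$ along the cusp, with weights $1$ and $q=|\finitum(P)|$ as in Figure \ref{sdoq}(B). The same rigidity applies on cusps of $Q$; in matrix terms, the second datum of the theorem gives us every row and every column of $N_Q$ indexed by a vertex that lies sufficiently deep inside a $Q$-cusp.

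The heart of the argument is to turn the commutation identity into a recursion along the $P$-cusps, seeded by the $Q$-cusp data. Fix a column index $w$ and a vertex $v$ interior to a $P$-cusp. By the rigidity of $N_P$, the left-hand side of the commutation identity collapses to the two-term expression $q\,(N_Q)_{v^+,w}+(N_Q)_{v^-,w}$. The right-hand side is $\sum_k (N_Q)_{v,k}(N_P)_{k,w}$, where $(N_P)_{\cdot,w}$ is already known and is supported in finitely many coordinates; when $v$ additionally lies deep in a $Q$-cusp, the row $(N_Q)_{v,\cdot}$ is also known, so the right-hand side is fully computable. The identity thus becomes a two-term linear recurrence in $v$ along the $P$-cusp, which, initialized at the outer end by the $Q$-cusp data, propagates inward and determines $(N_Q)_{v,w}$ for every interior $v$ of the cusp. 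Symmetrizing to exchange the roles of $v$ and $w$, and iterating over all cusps of $\mathfrak{wq}_P(\mathbb{O})$, one obtains every entry $(N_Q)_{v,w}$ for which at least one of $v$, $w$ lies in a cusp of $\mathfrak{wq}_P(\mathbb{O})$. The undetermined pairs are therefore confined to $S\times S$, where $S=S(\mathbb{O},P)$ is the finite set of vertices lying in the cores of $\mathfrak{wq}_P(\mathbb{O})$; this set depends only on $P$, as required.

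The main obstacle I foresee is verifying that the procedure is truly well posed. One must check that the two-term recurrence can always be inverted in the required direction, that the $Q$-cusp boundary data is compatible with the commutation relation (so that the recurrence is consistent and not merely over-determined), and that the transition across the overlap region where $P$- and $Q$-cusps meet does not create degeneracies. All these points should reduce to explicit computations inside the closed-form split-order description of the cusps supplied by Proposition \ref{p41}; carrying them out carefully, and thereby giving a precise intrinsic description of the exceptional set $S$ directly from the geometry of $\mathfrak{wq}_P(\mathbb{O})$, is the substantive step of the proof.
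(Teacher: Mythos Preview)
Your recursion along $P$-cusps via the commutation relation is precisely the engine of the paper's proof, but the paper packages it so as to dissolve the well-posedness and compatibility concerns you raise at the end. Instead of computing $N_Q$ directly, the paper considers the difference $\mathbf{F}=N_Q''-N_Q'$ of two candidates, each having the prescribed $Q$-cusps and commuting with $N_P$. This $\mathbf{F}$ automatically vanishes on all $Q$-cusp rows and columns, hence has a finite \emph{shell} $W$ (a set such that $\mathbf{F}$ is supported on the $W\times W$ block). The key Lemma~\ref{l52} then shows that a minimal shell cannot meet any $P$-cusp: if $v_t$ is the outermost $P$-cusp vertex in $W$, one applies $\mathbf{F}$ to the identity $\mathbf{T}(\delta_{t+1})=p\,\delta_t+q\,\delta_{t+2}$ to force $\mathbf{F}(\delta_t)=0$, with a dual argument on the image side, so $v_t$ may be removed from $W$. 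Iterating confines $W$ to the $P$-core, which is the required $Q$-independent finite set. Because one is bounding where two solutions can differ rather than solving for one, the questions of invertibility of the recurrence, consistency of boundary data, and the handover between $P$- and $Q$-cusp regions simply do not arise. Your direct computation can be made to work, but it must be run row-by-row (computing the full row $(N_Q)_{v^-,\cdot}$ from the rows at $v$ and $v^+$), not one fixed column $w$ at a time as your phrasing suggests, since the right-hand side $\sum_k (N_Q)_{v,k}(N_P)_{k,w}$ needs the entire row at $v$; and the initialization relies on the fact that sufficiently deep $P$-cusp vertices lie in $Q$-cusps, which is true but is an extra step. The obstruction-space formulation absorbs all of this into one clean lemma.
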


A few comments are in order here, knowing the cusps of $\mathfrak{wq}_Q(\mathbb{O})$, do indeed mean knowing
every edge, and the corresponding weights, except for a finite 
set $S'$ of pairs $(v,w)$, but this set depends on
$Q$ and grows with the degree of $Q$, so having a uniform bound
on our ``hard to compute'' set is generally useful,
making more likely that we can finish the computation with 
some sort of ``ad-hoc' argument. This is illustrated by
the examples in \S8.
More importantly,
this set of bad pairs is quite explicit, and depends heavily on the geometry of  $\mathfrak{wq}_P(\mathbb{O})$,
so we can frequently prove it to be empty in explicit examples.

Another useful remark is that knowing the weighted graph  $\mathfrak{wq}_Q(\mathbb{O})$ is the
same as knowing the weights $m_{v,w}$ provided that we set $m_{v,w}=0$ when $w$ and $v$ are
not neighbors. The reduced graph is recovered by joining the 
pairs $(v,w)$ for which $m_{v,w}\neq0$.
For this reason it is useful to re-state our main result 
in terms of matrices, which requires that we define 
cusps in an infinite matrix.  As in \S1,
the neighborhood matrix is written as $N_P(\mathbb{O})=(m_{i,j})_{i,j=1}^\infty$,
where $m_{i,j}$ is the number of neighbors of an order $\Da_j$ that are conjugate
to $\Da_i$, for some sequence $\Da_0,\Da_1,\Da_2,\dots$ of representatives of
all conjugacy classes in $\mathbb{O}$. 

An infinite matrix $A=(a_{i,j})_{i,j=0}^\infty$ is said to be graphic if the relation
$a_{i,j}\neq0$ is symmetrical in $i$ and $j$. If this is the case, there is a reduced fine
graph whose edges are precisely the pairs $(\Da_i,\Da_j)$ for which
$a_{i,j}\neq0$, and we call it the associated graph for $A$. 
An infinite graphic matrix is CF if so is its associated graph.
In this case, there is a finite number of increasing sequences 
$\{r_{1,t}\}_{t=0}^\infty,\dots,\{r_{N,t}\}_{t=0}^\infty$
 of positive integers, called cusp-sequences, satisfying the following conditions:
\begin{enumerate}
\item The cusp-sequences are disjoint, i.e., $r_{i,t}\neq r_{j,s}$ whenever $i\neq j$.
\item If $t>0$ and $i=r_{m,t}$, then $a_{i,j}\neq0$ if and only if $j\in\{r_{m,t-1},r_{m,t+1}\}$.
\item Every positive integer, outside a finite set, is in the rank of some cusp-sequence.
\end{enumerate}
By a cusp of a CF matrix we mean one of these sequences $\{r_{i,t}\}_{t=0}^\infty$,
 together with the corresponding sequence of rows $\{R(r_{i,t})\}_{t=0}^\infty$
and columns $\{C(r_{i,t})\}_{t=0}^\infty$ of $A$,
so that the cusps of a CF matrix have all the information needed to describe it outside a finite set of
coordinates. The word ``explicitly'' in Theorem \ref{t1} refers to the fact that these rows and column are known
for some Eichler orders, as it will be clear on the examples. Next result is a re-statement of Theorem \ref{t2b}
in terms of infinite matrices. Recall that 
$\Pi_\mathbb{O}$ is the set of places of $K$ where the orders in $\mathbb{O}$ are maximal.

\begin{theorem}\label{t2}
Let $\mathbb{O}$ be a genus of orders of maximal rank, and let $P\in\Pi_{\mathbb{O}}$.
Then, there is an explicit finite set $T=T(P)\subseteq\mathbb{N}$ such that,
for any place $Q\in\Pi_{\mathbb{O}}$, 
every coefficient $m_{i,j}$ of the neighborhood matrix $N_Q(\mathbb{O})$
satisfying $(i,j)\notin T\times T$
can be computed from the following data:
\begin{enumerate}
\item The cusps of $N_Q(\mathbb{O})$.
\item The matrix $N=N_P(\mathbb{O})$.
\end{enumerate}
\end{theorem}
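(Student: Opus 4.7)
My plan is to combine Theorem \ref{t1} with the commutation of the neighborhood matrices at distinct places. By Theorem \ref{t1}, each connected component of $\mathfrak{c}_P(\mathbb{O})$ is a CF graph; hence the matrix $N_P = N_P(\mathbb{O})$ has finitely many cusp-sequences $\{r_{m,t}\}_{t \geq 0}$, for $m = 1,\dots,N$, whose ranges exhaust $\mathbb{N}$ outside a finite set $T = T(P)$. I would take this $T$ as the finite set in the statement. For every $i\notin T$, the row $R(i)$ of $N_P$ has support of cardinality two, at the two cusp-neighbors $r_{m,t\pm1}$ of $i = r_{m,t}$, and the corresponding matrix entries are explicitly known from $N_P$.

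The key algebraic input is the commutation $N_P N_Q = N_Q N_P$, which follows from the independence of the local Hecke actions at $P$ and $Q$ on the adelic double-coset space of $\mathbb{O}$, as used in \cite{cqqgvro}. In coordinates,
\begin{equation}\label{commut}
\sum_k (N_P)_{i,k}(N_Q)_{k,j} \;=\; \sum_k (N_Q)_{i,k}(N_P)_{k,j}.
\end{equation}
When $j = r_{m,t}\notin T$, the right-hand side of (\ref{commut}) collapses to the two terms $(N_Q)_{i,r_{m,t-1}}(N_P)_{r_{m,t-1},j} + (N_Q)_{i,r_{m,t+1}}(N_P)_{r_{m,t+1},j}$; the left-hand side involves only the entries $(N_Q)_{k,j}$ for $k$ in the finite, explicitly known $P$-neighborhood of $i$.

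From this, I would induct downward on $t$ along each cusp of $N_P$. The base case is provided by the cusps of $N_Q$: Theorem \ref{t1} applied at $Q$ shows that the cusps of $N_Q$ are indexed by the Picard group of $\oink_X(X-\{Q\})$, and the explicit description of the orders in a cusp as split orders is intrinsic to the genus, so the tails of the $N_P$-cusp sequences $\{r_{m,t}\}_{t\gg 0}$ eventually coincide (after a finite shift and reindexing) with tails of cusps of $N_Q$. Thus for $t$ large enough, $(N_Q)_{i,r_{m,t}}$ is part of the given data. Solving (\ref{commut}) for varying $i$ then extracts $(N_Q)_{\cdot,r_{m,t-1}}$ from $(N_Q)_{\cdot,r_{m,t}}$, $(N_Q)_{\cdot,r_{m,t+1}}$, and the known matrix $N_P$. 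Iterating decreases $t$ until we reach the boundary of $T$, determining every $m_{i,j}$ with $j\notin T$; the symmetric zero pattern of $N_Q$ handles the case $i\notin T,\,j\in T$ by transposing the argument.

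The main obstacle is the asymptotic alignment of cusps between $N_P$ and $N_Q$ and the invertibility of the linear system extracted from (\ref{commut}) at each inductive step. Non-vanishing of the leading coefficient $(N_P)_{r_{m,t+1},j}$ is automatic from the cusp structure, but identifying a deep cusp index of $N_P$ with a deep cusp index of $N_Q$ requires the intrinsic characterization of the orders in a cusp as split orders, furnished by Theorem \ref{t1} at both places. For pairs $(i,j)\in T\times T$, the vertex $j$ lies in the finite core of $\mathfrak{c}_P(\mathbb{O})$ where several cusp branches meet and the right-hand side of (\ref{commut}) regains many terms; the tridiagonal reduction that drives the induction then breaks down, which is the unavoidable source of the exceptional set $T\times T$.
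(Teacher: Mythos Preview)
Your argument is correct and is essentially the paper's proof unpacked: the paper packages your recursion along $P$-cusps in the language of an \emph{obstruction space} $O_T$ and \emph{shells} (Lemmas~\ref{l52}--\ref{l54}), observing that two candidates for $N_Q$ with the prescribed $Q$-cusps and commuting with $N_P$ differ by an $\mathbf{F}\in O_T$ with finite shell, and then showing (by exactly your inductive step $\mathbf{T}(\delta_{t+1})=p\delta_t+q\delta_{t+2}$) that a minimal shell for such $\mathbf{F}$ cannot meet any $P$-cusp. One small correction: your base-case justification via the split-order description of cusp vertices is over-specific---that description is only supplied for Eichler orders of multiplicity-free level, whereas Theorem~\ref{t2} is stated for arbitrary genera. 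All you actually need is that a vertex sufficiently deep in a $P$-cusp is a $Q$-cusp vertex, and this follows immediately from the CF property at $Q$ (Theorem~\ref{t1}): only finitely many conjugacy classes lie outside the $Q$-cusps, so each infinite $P$-cusp eventually consists of $Q$-cusp vertices.
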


In fact, we prove that the set of  CF matrices commuting with $N$
that have a prescribed set of cusps lies on a coset of a finite dimensional vector
space whose dimension depends only on $N_P(\mathbb{O})$.
In \cite{cqqgvro} we managed to get a closed formula for all matrices $N_Q(\mathbb{O}_0)$,
when $\mathbb{O}_0$ is the genus of maximal orders on the projective line $X=\mathbb{P}^1$.
 This is in fact a consequence of the following result,
 which we prove in \S5:

\begin{theorem}\label{t3}
Let $\mathbb{O}$ be a genus of orders of maximal rank and let $\mathfrak{wq}=\mathfrak{wq}_P(\mathbb{O})$
its  corresponding full WRFQ at some place $P\in\Pi_{\mathbb{O}}$. Assume that each connected component of
the graph obtained by removing all half edges in $\mathfrak{wq}$ is a tree with at most one leaf. 
Then there is a unique CF matrix commuting with $N_P(\mathbb{O})$
with any prescribed set of cusps.
\end{theorem}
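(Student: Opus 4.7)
The plan is to prove uniqueness by reducing to a homogeneous statement and then propagating vanishing from the cusps inward. Suppose $M_1$ and $M_2$ are two CF matrices with identical cusp data that both commute with $N := N_P(\mathbb{O})$. Their difference $M := M_1 - M_2$ has support contained in some finite product $T \times T$ (the cusp data fixes every entry outside the finite core) and still satisfies $MN = NM$. Thus it suffices to show that any matrix $M$ supported on a finite square and commuting with $N$ is identically zero.

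The structural hypothesis forces each connected component of $\mathfrak{wq}$, after deleting half edges, to be a strictly linear object: an infinite ray, a bi-infinite line, or a short finite chain. In particular, each vertex $v$ of such a component has at most two actual-vertex neighbors distinct from $v$ itself, and these lie along the unique chain traversing the component. Loops and half edges contribute only to the diagonal entry $N(v, v)$, and do not introduce additional off-diagonal neighbors in $N$.

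The argument is then an outside-in sweep. Enumerate the vertices of a cusp as $v_0, v_1, v_2, \dots$ with $v_0$ attached to the core, and let $v_s$ be the deepest index still lying in $T$. For an arbitrary index $i$, evaluate the identity $(MN)_{i, v_{s+1}} = (NM)_{i, v_{s+1}}$. The right-hand side vanishes, because $M(k, v_{s+1}) = 0$ for every $k$ when $v_{s+1} \notin T$. The left-hand side, expanded using the linear neighborhood structure at $v_{s+1}$, collapses to
\begin{equation*}
M(i, v_s)\, N(v_s, v_{s+1}) + M(i, v_{s+1})\, N(v_{s+1}, v_{s+1}) + M(i, v_{s+2})\, N(v_{s+2}, v_{s+1}),
\end{equation*}
and the last two summands vanish because $v_{s+1}, v_{s+2} \notin T$. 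Since the cusp weight $N(v_s, v_{s+1})$ is strictly positive (cf.\ Figure \ref{sdoq}(B)), we conclude $M(i, v_s) = 0$ for every $i$, killing the column indexed by $v_s$. The symmetric computation at position $(v_{s+1}, j)$ kills the corresponding row.

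Hence $v_s$ may be dropped from $T$, and the same step applies to the new outermost index. Iterating along each cusp, and simultaneously from both ends of any bi-infinite line component, we exhaust $T$ and conclude $M = 0$. The main technical point is to verify that the three-term recurrence remains valid once the sweep leaves the pure cusp region and enters the finite core of a component; here the \emph{at most one leaf} hypothesis is decisive, since it forbids branching along the chain and guarantees that each inductive step retains a single nonzero summand on the $MN$ side, never obstructed by additional neighbors entering the equation.
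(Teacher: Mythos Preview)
Your proof has a genuine gap: the structural claim that ``at most one leaf'' forces linearity is false. A tree with at most one leaf can have vertices of arbitrarily high degree; for instance, take a vertex $c$ with three infinite rays attached --- this tree has no leaves, yet $c$ has degree $3$. The paper explicitly allows such \emph{essential nodes} (vertices with three or more non-half-edge neighbors), and its proof is built precisely around handling them. Your assertion that ``each vertex $v$ of such a component has at most two actual-vertex neighbors'' is simply incorrect, and your three-term recurrence, as written, collapses at any branching point: the equation $(MN)_{i,u}=(NM)_{i,u}$ at a vertex $u$ of degree $\geq 3$ involves more than one potentially surviving $T$-term on the left, so you cannot isolate $M(i,v_s)$ without further argument.

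The paper's route is different from your sweep. Rather than peeling vertices directly, it shows that the obstruction space $O_T$ admits the empty set as a shell by iteratively simplifying the graph: at the essential node $w$ furthest from the unique leaf, two or more generalized cusps meet, and one merges a pair of them into a single generalized cusp (reassigning the weight at $w$ as in Figure~\ref{fi8}(A)). Any shell of the original system remains a shell of the reduced one by the argument of Lemma~\ref{l52}. Repeating, one eventually reaches a single generalized cusp or a line, handled by Lemma~\ref{l53}. Your outside-in idea can in fact be salvaged, but not by the claim you made: what you actually need is that for every non-empty finite $T$ in a tree with at most one leaf there exists $u\notin T$ with a \emph{unique} $T$-neighbor. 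This is true, and is where the leaf hypothesis genuinely enters --- one way to see it is via the convex hull $H$ of $T$: every leaf of $H$ lies in $T$, at most one such leaf can coincide with the ambient tree's unique leaf, so some other hull-leaf has an external neighbor, and that neighbor is exposed.
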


In other words, if the hypotheses are satisfied, the whole
neighborhood matrix $N_Q(\mathbb{O})$ can be computed from the cusps and $N_P(\mathbb{O})$
by applying the commuting relations.
We provide explicit examples of genera for which all neighborhood matrices can be computed from a unique
matrix by this procedure and one for which we have an actual obstruction. The preceding result indicates that
this fact is, sometimes, a consequence of the geometry of the graph. Indeed, we provide graphs $\mathfrak{g}$
such that any neighborhood matrix whose associated graph is $\mathfrak{g}$ would have an obstruction for 
this computation, and others for which the existence of such an obstruction depends on the matrix.  

Recall that the divisor valued distance between two maximal orders
$\Da$ and $\Da'$ is defined as 
$D=D(\Da,\Da')=\sum_{P\in|X|}d_P(\Da_P,\Da'_P)P$,
where $d_P$ is the usual graph distance as vertices of the
Bruhat-Tits tree at $P$. An Eichler order of level $D$
is, by definition, the intersection of two maximal orders
whose divisor valued distance is $D$. All Eichler orders
of level $D$ belong to a unique genus denoted $\mathbb{O}_D$.

Applying the commuting relation can be a long process in general,
but, for the particular case of Eichler orders,
we have a closed formula that allows us to compute
the WRFQ at a second place $Q$ whose image $\bar{Q}$
in the Picard group is a multiple of $\bar{P}$. 
It can be stated in terms of the polynomials $\tilde{f}_i$
defined by the following recursive relation:
$$ \tilde{f}_1(\mathbf{x},\mathbf{y})=\mathbf{x},
\qquad
\tilde{f}_2(\mathbf{x},\mathbf{y})=
\mathbf{x}^2-2\mathbf{y}, 
\qquad \tilde{f}_{n+2}(\mathbf{x},\mathbf{y})=
\mathbf{x}\tilde{f}_{n+1}(\mathbf{x},\mathbf{y})-
\mathbf{y}\tilde{f}_n(\mathbf{x},\mathbf{y}).$$
Now set $f_i(\mathbf{x})=
\tilde{f}_i(\mathbf{x},q)$, where 
$q=|\mathbb{F}(P)|$.

\begin{theorem}\label{t5}
Let $X$ be a curve and let $D$ be a multiplicity free divisor over $X$. Let $P$ and $Q$ be two places of $X$
satisfying the following conditions:
\begin{itemize}
\item Neither $P$ nor $Q$ is in the support of $D$.
\item $Q-nP$ is principal. 
\end{itemize}
Then the matrix 
$f_n\big(N_P(\mathbb{O}_D)\big)-N_Q(\mathbb{O}_D)$
vanishes, except maybe in an
explicit finite dimensional invariant subspace 
depending only on $P$. Furthermore, if 
$\mathfrak{wq}=\mathfrak{wq}_P(\mathbb{O}_D)$ satisfies 
the hypotheses of Theorem \ref{t3}, then
$N_Q(\mathbb{O}_D)=f_n\big(N_P(\mathbb{O}_D)\big)$.
\end{theorem}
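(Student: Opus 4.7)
The plan is to recognize $f_n(N_P(\mathbb{O}_D))$ and $N_Q(\mathbb{O}_D)$ as two CF matrices that both commute with $N_P(\mathbb{O}_D)$ and share the same cusps, and then to apply Theorems \ref{t2b} and \ref{t3}. Commutation for $f_n(N_P(\mathbb{O}_D))$ is automatic since it is a polynomial in $N_P(\mathbb{O}_D)$; for $N_Q(\mathbb{O}_D)$ it is the commutation of neighborhood matrices at different places that underlies Theorem \ref{t2b}.

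The combinatorial heart of the argument is the interpretation of $f_n$ as the power-sum polynomial $\tilde f_n(x,y)=r^n+s^n$, where $r,s$ are the roots of $z^2-xz+y=0$. On the $(q+1)$-regular Bruhat--Tits tree $\mathfrak{t}_P$, induction using the defining recurrence together with the initial data $\tilde f_1=x$, $\tilde f_2=x^2-2y$ shows that $f_n$ applied to the tree adjacency encodes, up to a controlled lower-order correction supported near the basepoint, the distance-$n$ relation. Passing to the quotient $\mathfrak{wq}_P(\mathbb{O}_D)$ and restricting to the interior of a cusp, whose explicit shape is given by Theorem \ref{t1} and Figure \ref{sdoq}(B), a direct computation with this recurrence shows that $f_n(N_P(\mathbb{O}_D))$ produces the weight pattern of a cusp with parameter $q^n$, which is exactly $|\mathbb{F}(Q)|$ since $Q-nP$ is principal forces $\deg Q=n\deg P$.

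To match cusps globally, I would use the principality hypothesis directly: writing $Q-nP=\operatorname{div}(\pi)$ for some $\pi\in K^*$, multiplication by $\pi$ realizes an identification between $\operatorname{Pic}(A_P)$ and $\operatorname{Pic}(A_Q)$, which are the cusp-indexing groups on each side by Theorem \ref{t1}. This shows that $f_n(N_P(\mathbb{O}_D))$ and $N_Q(\mathbb{O}_D)$ are two CF matrices commuting with $N_P(\mathbb{O}_D)$ with matching cusps. Theorem \ref{t2b} then yields that they agree outside the finite set of bad pairs $S=S(\mathbb{O}_D,P)$ depending only on $P$, so their difference is supported on the finite-dimensional span of these vertices; that span is $N_P$-invariant because its complement is. The second statement follows immediately from Theorem \ref{t3}, whose hypothesis makes the CF commuting matrix with prescribed cusps unique, collapsing $S$ to the empty case and forcing $N_Q(\mathbb{O}_D)=f_n(N_P(\mathbb{O}_D))$.

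The hardest step will be the cusp matching: verifying both that the $\pi$-twist transports the explicit split-order description of Theorem \ref{t1} correctly from the $P$-side to the $Q$-side, and that the Chebyshev-like recurrence for $f_n$, applied to the weight pattern of Figure \ref{sdoq}(B), indeed outputs the cusp pattern with parameter $|\mathbb{F}(Q)|$. The multiplicity-freeness of $D$ and its disjointness from $\{P,Q\}$ are essential here, as they keep the cusp structure under the concrete control afforded by Theorem \ref{t1}.
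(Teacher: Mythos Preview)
Your plan is essentially the paper's proof: reduce to Theorems~\ref{t2} and~\ref{t3} by showing that $f_n(N_P(\mathbb{O}_D))$ and $N_Q(\mathbb{O}_D)$ are CF matrices, both commuting with $N_P(\mathbb{O}_D)$, with identical cusps. Your identification of $\tilde f_n$ as the power-sum $r^n+s^n$ in the roots of $z^2-\mathbf{x}z+\mathbf{y}$ is exactly the paper's functional identity $f_n(x+qx^{-1})=x^n+q^nx^{-n}$, and the paper applies it just as you propose, directly on a cusp where $\mathbf{T}_P(\delta_{v_B})=\delta_{v_{B+P}}+q\,\delta_{v_{B-P}}$, yielding $f_n(\mathbf{T}_P)(\delta_{v_B})=\delta_{v_{B+nP}}+q^n\delta_{v_{B-nP}}$.

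One point to correct: your proposed cusp matching via ``multiplication by $\pi$ identifies $\operatorname{Pic}(A_P)$ with $\operatorname{Pic}(A_Q)$'' is not right and not needed. These two groups are generally \emph{not} isomorphic when $n>1$ (already for $X=\mathbb{P}^1$ one has $|\operatorname{Pic}(A_P)|=\deg P$ versus $|\operatorname{Pic}(A_Q)|=\deg Q=n\deg P$), and indeed $N_Q$ has more cusps than $N_P$. The paper's argument is simpler and avoids this: the cusp vertices on both sides are the \emph{same} split orders $\Da_B$, and the principality of $Q-nP$ is used only through $\Da_{B\pm Q}\cong\Da_{B\pm nP}$ (split orders depend on $B$ only up to linear equivalence). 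Thus $\mathbf{T}_Q(\delta_{v_B})=\delta_{v_{B+nP}}+q^n\delta_{v_{B-nP}}$ matches $f_n(\mathbf{T}_P)(\delta_{v_B})$ on every cusp vertex of sufficiently large degree, which is exactly the cusp agreement needed. With that fix, your argument coincides with the paper's.
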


We show in the examples that the hypotheses for the
last conclusion of the preceding result cannot
be fully removed, as there are indeed cases where
$f_n\big(N_P(\mathbb{O}_D)\big)-N_Q(\mathbb{O}_D)$
is non-zero.

Note that a different recurrence for the polynomials $f_i$
was given in \cite{cqqgvro}. The one presented here is simpler and
more natural from the condition $f_n(\mathbf{x}+q\mathbf{x}^{-1})=
\mathbf{x}^n+q^n\mathbf{x}^{-n}$,
which is what it is actually used, see \S\ref{sec5}.

The last two sections deal with explicit computations of examples.
In \S7 we compute, by an ad-hoc method, a few particular quotient 
graphs to which the results of this work can be applied. In \S8
we apply our reults to these graphs to actually obtain
some weighted reduced quotients at places of higher degree.
An aditional example from the literature is used to illustrate a case
where actual barriers to our method do exist.

\section{quotient graphs for orders of maximal rank}

The purpose of this section is to prove Theorem \ref{t1}. We need to recall a few facts from
the Theory of Spinor Genera. For a more detailed account on this topic see \cite{abelianos}.

Let $\mathbb{O}$ a genus of orders of maximal rank. We define the spinor class field 
$\Sigma=\Sigma(\mathbb{O})$ as the class field corresponding to the group $K^*H(\Da)$,
where $H(\Da)\subseteq J_X$ is the group of reduced norms of adeles $a\in\alge^*_\ad$ satisfying
$a\Da a^{-1}=\Da$ for some fixed order $\Da\in\mathbb{O}$. Note that $\Sigma$ is independent of the choice
of $\Da$ since the reduced norm is invariant under conjugation. There exists, furthermore,
 a well defined distance map
$\rho:\mathbb{O}^2=\mathbb{O}\times\mathbb{O}\rightarrow\mathrm{Gal}(\Sigma/K)$ satisfying the identities:
\begin{itemize}
\item $\rho(\Da,\Da'')=\rho(\Da,\Da')\rho(\Da',\Da'')$, for any triplet
$(\Da,\Da',\Da'')\in\mathbb{O}^3$,  and 
\item $\rho(\Da,a\Da a^{-1})=[n(a),\Sigma/K]$, where $x\mapsto[x,\Sigma/K]$ 
denotes the Artin map on ideles, and where 
$n:\alge_{\mathbb{A}}^*\rightarrow J_X$ denotes 
the reduced norm.
\end{itemize}
 Two orders $\Da$ and $\Da'$ are in the same 
spinor genus whenever any of the following equivalent conditions hold:
\begin{itemize}
\item $\Da'=a\Da a^{-1}$, 
for $a=bc$ with $c\in\alge$ and $n(b)=1$, 
\item $\Da'(U)$ and $\Da(U)$ are conjugate for any affine open set $U\subseteq X$, or
\item the automorphism $\rho(\Da,\Da')$ is the identity in $\mathrm{Gal}(\Sigma/K)$. 
\end{itemize}
It is easy to see from the preceding definitions that whenever two orders $\Da$ and
$\Da'$ are $P$-neighbors, their distance is
$$\rho(\Da,\Da')=[e(P),\Sigma/K],\textnormal{ where }e(P)_Q=\left\{\begin{array}{ccl}
1&\textnormal{if}& P\neq Q\\
\pi_P&\textnormal{if}& P=Q\end{array}\right.$$
and $\pi_P$ is a local uniformizer. 
This map is, by definition, the Frobenius homomorphism 
at the place $P$.
Recall that the $P$-variants of a fixed order $\Da$ correspond to
vertices of the Bruhat-Tits at $P$ via $\Da'\mapsto\Da'_P$.
In particular, the global orders $\Da$ with a fixed restriction
$\Da(U)$ belong to either one or two spinor genera,
according to whether $[e(P),\Sigma/K]$ is the identity or not. In the latter case the
classifying graph is  bipartite, as
neighboring (actual) vertices belong
to different spinor genera. In particular,
it has no half edges.

We need also to recall the cusp structure for the classifying
graphs $\mathfrak{c}_P(\mathbb{O}_0)$ of the genus
of maximal orders, which we re-state here in our notations. 

\begin{proposition}\label{p41}
For any smooth curve $X$, each cusp of $\mathfrak{wq}_P(\mathbb{O}_0)$ has the form shown
in Figure \ref{nf3}, where
\begin{figure}
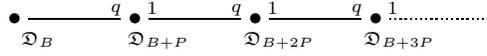

\[  \xygraph{
!{<0cm,0cm>;<.8cm,0cm>:<0cm,.8cm>::}
!{(0,1)}*+{\bullet}="a" !{(0.4,0.6)}*+{{}^{\Da_B}}="a1"
!{(2,1)}*+{\bullet}="b" !{(2.4,0.6)}*+{{}^{\Da_{B+P}}}="b1"
!{(4,1)}*+{\bullet}="c" !{(4.4,0.6)}*+{{}^{\Da_{B+2P}}}="c1"
!{(6,1)}*+{\bullet}="x" !{(6.4,0.6)}*+{{}^{\Da_{B+3P}}}="x1"
!{(8,1)}*+{}="d"
!{(1.7,1.1)}*+{{}^q} !{(2.3,1.1)}*+{{}^1}
!{(3.7,1.1)}*+{{}^q} !{(4.3,1.1)}*+{{}^1}
!{(5.7,1.1)}*+{{}^q} !{(6.3,1.1)}*+{{}^1}
 "a"-"b" "b"-"c" "c"-"x" "x"-@{.}"d"} \]
\caption{Structure of a cusp in a classifying graph of maximal orders.}\label{nf3}
\end{figure}
$\Da_C=\bmattrix{\oink_X}{\mathcal{L}^C}{\mathcal{L}^{-C}}{\oink_X}$,
and $\mathcal{L}^C$ is the line bundle defined at any affine open set
$U\subseteq X$ by the identity $$\mathcal{L}^C(U)=
\left\{f\in K\Big|\mathop{\mathrm{div}}(f)|_U+C|_U\geq0\right\}.$$
These maximal orders are called split.
\end{proposition}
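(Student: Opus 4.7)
The plan has three steps: construct the split orders explicitly, identify which divisors yield the same cusp vertex, and read off the weights from the valency of the Bruhat--Tits tree together with the unipotent action of $\Da_B(U)^*$.

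First, for any divisor $C$ on $X$ I would verify that $\Da_C$ as defined is an $X$-order of maximal rank, maximal at every place. At each $Q\in|X|$, $\mathcal{L}^C_Q=\pi_Q^{-v_Q(C)}\oink_Q$, so $(\Da_C)_Q=\bmattrix{\oink_Q}{\pi_Q^{-v_Q(C)}\oink_Q}{\pi_Q^{v_Q(C)}\oink_Q}{\oink_Q}$, which is conjugate in $\alge_Q$ to $\matrici_2(\oink_Q)$ by $\mathrm{diag}(1,\pi_Q^{-v_Q(C)})$. Taking $C=B+kP$, the completion $(\Da_C)_Q$ depends on $k$ only at $Q=P$, so the family $\{\Da_{B+kP}\}_{k\geq 0}$ consists of $P$-variants of the common $A$-order $\Da_B(U)$, with completions at $P$ lying along a geodesic ray in $\mathfrak{t}(P)$.

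Second, conjugation by $\mathrm{diag}(f,1)\in\mathrm{GL}_2(K)$ sends $\Da_C$ to $\Da_{C+\mathrm{div}(f)}$, so the map $C\mapsto\Da_C(U)$ factors, on divisors supported away from $P$, through $\mathrm{Pic}(A)$. Combined with Serre's count of cusps in Theorem~\ref{t1}, this produces a bijection between cusps of $\mathfrak{wq}_P(\mathbb{O}_0)$ and classes in $\mathrm{Pic}(A)$, and implies that for $k$ large enough the orders $\Da_{B+kP}$ are pairwise non-conjugate, so they really form a ray in the quotient.

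Third, with $m=v_P(B)$, the vertex $v_k=(\Da_{B+kP})_P$ is represented by the lattice $L_k=\oink_Pe_1\oplus\pi_P^{m+k}\oink_Pe_2$; its $q+1$ Bruhat--Tits neighbors correspond to lines in $L_k/\pi_PL_k\cong\finitum(P)^2$. A direct check shows that exactly one line yields the sublattice-neighbor $v_{k+1}$, while the remaining $q$ lines include $v_{k-1}$ and $q-1$ ``sideways'' neighbors. The unipotent sections $\bmattrix{1}{g}{0}{1}$ with $g\in H^0\bigl(X,\mathcal{L}^{B+kP}\bigr)$ lie in the stabilizer of $\Da_{B+kP}$, and for $k$ large enough Riemann--Roch provides a surjection $H^0\bigl(X,\mathcal{L}^{B+kP}\bigr)\twoheadrightarrow\finitum(P)$ reading off the ``leading coefficient at $P$''. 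The induced action on $L_k/\pi_PL_k$ fixes the line giving $v_{k+1}$ and permutes transitively the other $q$ lines, so all $q$ such tree-neighbors collapse in the quotient to $\Da_{B+(k-1)P}$, yielding the weights $m_{\Da_{B+(k+1)P},\Da_{B+kP}}=1$ and $m_{\Da_{B+(k-1)P},\Da_{B+kP}}=q$ of Figure~\ref{nf3}.

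The main obstacle will be controlling the unipotent action for small $k$, where the cusp is still attached to the body of the graph and the Riemann--Roch surjection may fail; this is what forces the cusp description to apply only past a finite initial segment, and is why one invokes Serre's CF result (Theorem~\ref{t1}) to handle the exceptional head.
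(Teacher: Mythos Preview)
Your step 3 is exactly the paper's argument: unipotent sections $\sbmattrix{1}{g}{0}{1}$ in the global stabilizer, combined with Riemann--Roch applied to $H^0(X,\mathcal{L}^{B+kP})/H^0(X,\mathcal{L}^{B+(k-1)P})$, show that the $q$ backward tree-neighbors of $\Da_{B+kP}$ collapse to $\Da_{B+(k-1)P}$ for $k$ large, giving the weights of Figure~\ref{nf3}.

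The differences are in the auxiliary claims, and step~2 has a gap. For the pairwise non-conjugacy of the $\Da_{B+kP}$, the paper invokes the classification $\Da_B\cong\Da_C\iff C\sim\pm B$ from \cite[Prop.~4.1]{cqqgvro}, which settles it at once since the degrees $\deg(B+kP)$ are distinct and positive. Your argument instead appeals to the factorization of $C\mapsto\Da_C(U)$ through $\mathrm{Pic}(A)$, but this does not separate the $\Da_{B+kP}$ from one another: they are $P$-variants and hence share the \emph{same} $A$-order $\Da_B(U)$, so nothing about $\mathrm{Pic}(A)$ distinguishes them. You need an independent invariant---for instance the dimension of $\Da_{B+kP}(X)$, which grows with $k$ by Riemann--Roch. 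For the converse direction (every cusp consists of split orders), the paper cites \cite[Lem.~7.5]{ogcseooff}; your counting strategy could substitute for this, but only once distinctness is repaired and once you check that rays for inequivalent $B$ land in different cusps (this again needs the $\pm B$ classification, not just the factorization). Finally, citing Theorem~\ref{t1} for the $\mathrm{Pic}(A)$ indexing is circular in this paper's logic, since that clause of Theorem~\ref{t1} is proved by importing the cusp description from Proposition~\ref{p41} itself; you should cite Serre's original result in \cite{trees} directly.
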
 

\begin{proof}
Note that the weights in the picture show that
each neighbor of the vertex $\Da_{B+tP}$, for $t\geq 1$,
is conjugate to $\Da_{B+(t-1)P}$ except for $\Da_{B+(t+1)P}$.
It is immediate that successive orders in  Figure \ref{nf3} 
are $P$-neighbors. 
Assume first that $\deg(B)\geq 0$.  Then the divisors of the form $B+tP$, for $t\geq 1$ all have positive degree.
It is known from \cite[Prop. 4.1]{cqqgvro}, that $\Da_B$ can be a conjugate of $\Da_C$ only if $C$ is linearly equivalent 
to either $B$ or $-B$. In particular, no two vertices in   Figure \ref{nf3} are conjugates.
Note that the group of units $\Da_{B+tP}(X)^*$ acts
transitively on the set of lattices of the form
$$\Lambda_v=
\oink_X\left(\begin{array}{c}v\\
1\end{array}\right)+ 
\mathcal{L}^{B+(t-1)P}
\left(\begin{array}{c}1\\
0\end{array}\right),$$
for $v\in\mathcal{L}^{B+tP}(X)$. The sheaves 
of endomorphisms of these lattices
are $P$-neighbors of $\Da_{B+tP}$.
Riemann-Roch Theorem implies that 
$\mathcal{L}^{B+tP}(X)/
\mathcal{L}^{B+(t-1)P}(X)$ is an 
$\mathbb{F}$-vector
space of dimension $\deg(P)$, for $t$ 
large enough. Then, there are sufficiently 
many of these
lattices $\Lambda_v$ to obtain all 
$|\mathbb{F}(P)|=q^{\deg(P)}$ neighbors 
of $\Da_{B+tP}$
apart from $\Da_{B+(t+1)P}$. This proves that
we obtain a cusp of the required form in 
this case.

The preceding argument shows that every split order of the form $\Da_C$ for a divisor $C$ of large enough
degree belong to a cusp of the form already described. For split orders of the form $\Da_B$ with
$\deg(B)< 0$, we have the isomorphism $\Da_B\cong\Da_{-B}$, so in particular, the same holds for 
orders of the form $\Da_C$ for a divisor $C$ of sufficiently negative degree. Since $\mathrm{Pic}_0(X)$ is finite,
as it is isomorphic to the set of $\mathbb{F}$-points of an abelian variety, all split orders except for a finite number
of conjugacy classes belong to a cusp of the type described above. 
Now \cite[Lem. 7.5]{ogcseooff} guarantees that every cusp consist of split orders, so there are no additional cusps.
\end{proof}

The weights in Figure \ref{nf3} can be reinterpreted in
terms of matrices as follows:
If we write
$N_P(\mathbb{O})=(m[\Da,\Da'])_{\Da,\Da'}$, then
\begin{equation}\label{r33}
m[\Da_{B+tP},\Da_{B+(t+1)P}]=1,\quad
m[\Da_{B+tP},\Da_{B+(t-1)P}]=|\finitum(P)|=q^{\deg(P)}
\end{equation}
 for large values of the degree $\deg(B)$. 
 We certainly can assume this to hold for $t\geq1$ by 
replacing the
initial point $\Da_B$ by another vertex $\Da_{B+nP}$
for a large enough value of $n$. Such order is already
in the cusp in Figure \ref{nf3}, so this replacement makes 
indeed the cusp smaller. We assume these
conventions in all that follows. In fact, Riemann-Roch
Theorem gives a precise bound for the required 
degree
of $B$ so that $\Da_B$ is in a cusp, but we do 
not need it here.

Before we prove  Theorem \ref{t1}, we need a 
lemma on finite coverings of graphs.
To simplify the statement, we say that a cover 
$\gamma:\mathfrak{g}\rightarrow\mathfrak{g}'$
is smooth on cusps if no cusp of 
$\mathfrak{g}'$ can have more than one lifting
in $\mathfrak{g}$ with a given initial vertex.
This condition is immediate for a cover of 
the form $\mathfrak{t}_P^{[1]}\rightarrow
\mathfrak{c}_P(\Da)$,
if the initial point $\Da_B$ is set far enough, 
from the condition (\ref{r33}) above. 
It follows that it also 
applies to intermediate covers.

\begin{lemma}\label{l41p}
Let $\gamma:\mathfrak{g}\rightarrow\mathfrak{g}'$ be a finite cover of graphs that is smooth on cusps. 
Then only a finite number of points in any lifting of a cusp $\mathfrak{c}'$ can be ramified.
\end{lemma}

\begin{proof}
Let $v'$ be the initial vertex of the cusp.
 Consider the pre-images $v_1,\dots,v_n$ of $v'$
in $\mathfrak{g}$, and for each of these vertices consider the corresponding lifting
$\mathfrak{c}_i$  of $\mathfrak{c}'$.
 If any non-initial vertex $w$ in one of these rays is a ramification
point  for the cover,
then two of these rays must coincide at $w$, and therefore at every subsequent point, as in Figure 5A,
or the covering would not be smooth on cusps. 
Furthermore, the vertices preceding $w$ in 
either cusp are different.
This can happen only once for each pair 
of liftings $(\mathfrak{c}_i,\mathfrak{c}_j)$, whence the result follows.
\begin{figure}
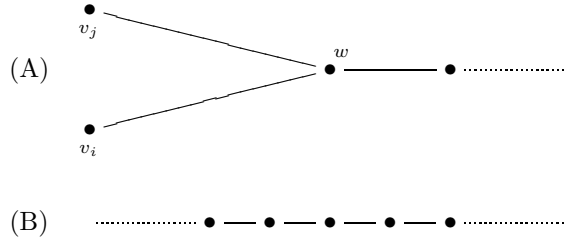

\[  \xygraph{
!{<0cm,0cm>;<.8cm,0cm>:<0cm,.8cm>::}
!{(0,1)}*+{\bullet}="a" !{(0,0.6)}*+{{}^{v_i}}="a1"
!{(0,3)}*+{\bullet}="b" !{(0,2.6)}*+{{}^{v_j}}="b1"
!{(4,2)}*+{\bullet}="c" !{(4.2,2.2)}*+{{}^{w}}="c1"
!{(6,2)}*+{\bullet}="x" 
!{(-1,2)}*+{\textnormal{(A)}}="name"
!{(8,2)}*+{}="d"
 "a"-"c" "b"-"c" "c"-"x" "x"-@{.}"d"} \]
\[  \xygraph{
!{<0cm,0cm>;<.8cm,0cm>:<0cm,.8cm>::}
!{(3,1)}*+{\bullet}="a" 
!{(5,1)}*+{\bullet}="b" 
!{(4,1)}*+{\bullet}="c" !{(6,0.6)}*+{}="c1" !{(2,1.4)}*+{}="c2" 
!{(6,1)}*+{\bullet}="x" 
!{(2,1)}*+{\bullet}="y" 
!{(8,1)}*+{}="d"
!{(0,1)}*+{}="e"
!{(-1,1)}*+{\textnormal{(B)}}="name"
 "a"-"c" "b"-"c" "b"-"x" "a"-"y" "x"-@{.}"d" "y"-@{.}"e"
} \]
\caption{Two merging rays, as in the proof of Lemma \ref{l41p} (A), and the line mentioned in
the proof of Theorem \ref{t1} (B).}\label{fi6}
\end{figure}
\end{proof}

The preceding result shows that some final section of any cusp is covered by
a finite number of isomorphic copies of itself. We might express this fact by saying
that finite coverings that are smooths on cusps are trivial near the end of a cusp.

$\phantom{X}$

\paragraph{Proof of Theorem \ref{t1}}
 Let $\Ra\in\mathbb{O}$, and let $\Da$ be
a maximal order containing $\Ra$. It is well known that the group of units of an order
is open in the adelic topology. In other words,
if $A=\oink_X(U)$ for $U=X-\{P\}$, there is an ideal
$J\subseteq A$ such that:
$$\Gamma_J=\left\{a\in\Da(U)^*\Big|a\equiv\bmattrix 1001 \ \mod J\Da(U)\right\}\subseteq \Ra(U)^*.$$
Recall that $\mathfrak{s}_P(\Da)=\Da(U)^*\backslash \mathfrak{t}$ is CF, 
as shown by Serre in \cite[Theorem II.9]{trees}.
The fine quotient is also CF, if the definition of cusp and CF
graph for fine quotients follows our usual conventions,
as we assume in all that follows.
We make two claims:
\begin{enumerate}
\item If  $\Delta'$ is any subgroup of finite index in
$\Da(U)^*$, then $\Delta'\backslash\backslash \mathfrak{t}$ is CF.
\item If $\Delta'\backslash\backslash \mathfrak{t}$ is CF,  
and if it $\Delta'$ is a normal subgroup of finite index in
$\Delta$, then $\Delta\backslash\backslash \mathfrak{t}$ is CF.
\end{enumerate}
The results follows from the claims by the following observations:
\begin{itemize}
\item  The subgroup $\Gamma_J$ is normal of  finite index in either $\Da(U)^*$ or $\Ra(U)^*$, and
\item the subgroup $\Ra(U)^*$ is normal of  finite index in the normalizer of the order $\Ra(U)$.
\end{itemize}

 It follows from the previous lemma, and the discussion preceding it, that any cusp
has a sub-ray whose pre-image is the coproduct of a finite number of rays, 
while the pre-image of the rest of the graph is finite. This proves the first claim.
To prove the second claim we look at the action of the finite group $\Psi=\Delta/\Delta'$ on 
the quotient graph $\Delta'\backslash \mathfrak{t}$. 
By redefining a cusp, we mean replacing a cusp by a proper subray.
 We claim that, by  redefining the cusps if needed, we can assume that the following 
statement holds:
\begin{quote}
For any element $\psi\in\Psi$ and any cusp $\mathfrak{c}$, the graph $\psi.\mathfrak{c}$ is a cusp.
\end{quote}
This can be done choosing a cusp $\mathfrak{c}$, with vertices $v_0,v_1,v_2,\dots$,
in order,  and redefining $\psi.\mathfrak{c}$ as a cusp
for any element $\psi\in \Psi$. This could fail if $\mathfrak{c}$ and $\psi.\mathfrak{c}$ intersect non-trivially.
If such intersection is finite, we can avoid it by redefining the cusps. In fact, this can happen only
for a $(\mathbb{Z}/2\mathbb{Z})$-action on a line, which is the graph depicted in Figure \ref{fi6}(B),
 but we do not need this fact here. When the intersection
is infinite, we have a relation of the form $\psi(v_k)=v_{k+t}$, for some integer $t$ and any large enough
integer $k$. By replacing $\psi$ by $\psi^{-1}$ if needed, we can assume $t\geq0$. This implies that
$\psi^n(v_k)=v_{k+nt}$ which, given the fact that $\psi$ belongs to a finite group, implies that $t=0$. 
Hence $\psi(v_k)=v_k$ for $k$ large enough, and we assume it to hold for all $k$ by redefining the cusps.
It follows that every ray in the orbit of  $\mathfrak{c}$ is a cusp. 
 Now we  can repeat this argument for the remaining cusps. This proves the claim.
Since $\Psi$ simply permutes a set of cusps, the result follows.

As for the cusps of the classifying graph for a genus of multiplicity-free level Eichler orders to be explicit,
recall that the maximal orders containing an Eichler order of the form $\Ea_B=\Da_B\cap\Da_{B+D}$,
for an effective divisor $D$, have the form $\Da_{B+D'}$ for $0\leq D'\leq D$. Riemann-Roch' Theorem
implies that, for divisors $B$ of large degree, $\Da_B$ can be characterized as the maximal order
containing $\Ea_B$ with the smallest ring of global sections. This implies that $\Da_B\mapsto\Ea_B$
is an injective map preserving neighbors, near the end of a cusp,
in the sense defined in the paragraph preceding the proof. 
It follows from
 \cite[Prop. 7.6]{ogcseooff} that, when $D$ is multiplicity free, 
 every cusp (except for a finite initial sub-path) is in the 
 image of such map, so the
description of cusps can be imported from the classifying 
graph of maximal orders to the classifying
graph of Eichler orders of level $D$.
\qed

\section{On obstruction sets.}\label{sec5}

At this point, we need to introduce a more general notation
that will be used in the sequel.
By a weighted CF graph $\mathfrak{wg}$,  or a WCFG, we mean a 
reduced fine graph $\mathfrak{g}$ 
that can be obtained by attaching
a finite number of cusps (rays)  to a finite graph $\mathfrak{z}$ 
called the center, together with an assignation
of weights $m_{v,w}\in\mathbb{R}$ for any pair of 
neighboring vertices 
$v,w\in V=V(\mathfrak{g})$.
If the WCFG is defined from a quotient of the Bruhat-Tits 
tree at a place $P$, then  the weights are non-negative integers,
and we have $\sum_wm_{v,w}=
|\mathbb{F}(P)|+1=q^{\deg(P)}+1$ for every vertex $v\in V$, 
but we do not assume this in the sequel.
The reason is that we want to be able to draw WCFGs associated to more 
general matrices. In general the graph $\mathfrak{g}$ need not be connected. 
As usual we set $m_{v,w}=0$ if $v$ and $w$ are not neighbors.
We say that $\mathfrak{wg}$ is regular if $m_{v,w}>0$ for every
pair of neighbors $(v,w)$, and we call it
normalized when $\sum_wm_{v,w}=1$ 
for every vertex $v$. Every regular WCFG can be normalized, replacing $m_{v,w}$ by 
$a_{v,w}=m_{v,w}/\left(\sum_wm_{v,w}\right)$. When the WCFG has the 
form $\mathfrak{wq}_P(\mathbb{O})$,
then this relation becomes $a_{v,w}=m_{v,w}/\left(q^{\deg(P)}+1\right)$. 
We denote the weights as
 $a_{v,w}$ whenever the WCFG is regular and normalized to avoid confusion.
By the neighborhood matrix of the WCFG, we mean the infinite matrix $N=(m_{v,w})_{v,w}$. We assume that
all graph considered here are locally finite, so that products of such matrices 
are well defined.
When the WCFG is regular and normalized, the matrix is written 
$T=(a_{v,w})_{v,w\in V}$. Since 
$\sum_wa_{v,w}=1$, the infinite matrix $T$ is the transition matrix 
of a Markov chain \cite{wiki1}. 
Markov Chains defined on quotient graphs can be used to 
study Diophantine approximation on local fields, see \cite{dalffvbtt}, but 
here they are just a tool to help intuition and simplify notations.

By a charge, we
mean a finitely supported signed measure $\mu:\powerset(V)\rightarrow\mathbb{R}$, where $\powerset(V)$
is the collection of all subsets of $V$. 
The set of all charges is a vector space $C_V$, which we call
the charge space for $V$. We denote by $\mathbf{T}$ the linear map
 $\mathbf{T}:C_V\rightarrow C_V$ defined by
 $$\mathbf{T}\mu(A)=
\sum_{x\in V}\sum_{y\in A}a_{x,y}\mu\big(\{x\}\big).$$
This formula defines a linear map for every WCFG $\mathfrak{wg}$,
but when $\mathfrak{wg}$ is regular and normalized some additional
language of Markov chain theory can be applied.
For instance, assume we have a probability space 
$(\Omega,\mathfrak{F},\mathbf{P})$, where $\Omega$ is a set,  
$\mathfrak{F}\subseteq\powerset(\Omega)$ is a $\sigma$-algebra 
and $\mathbf{P}:\mathfrak{F}\rightarrow\mathbb{R}$
is a probability measure. Assume further the existence of a sequence 
of random elements 
$X_n:\Omega\rightarrow V=V(\mathfrak{g})$,
whose associated conditional probabilities 
satisfy the following conditions:
\begin{itemize}
\item  $\mu_0(A)=\mathbf{P}\Big(X_0\in A\Big)$ defines a probability 
measure $\mu_0\in C_V$, i.e.,
$\mu_0$ is finitely supported.
\item $\mathbf{P}\Big(X_{n+1}=w\Big|X_n=v\Big)=a_{v,w}=\mathbf{P}\Big(X_{n+1}=w\Big|X_n=v,X_m=u\Big)$,
for any triplet of vertices $(v,w,u)$ and for any pair of positive integers $(n,m)$ with $m<n$.
\end{itemize}
Then $\mathbf{T}\mu_0(A)=\sum_{x\in V}\mathbf{P}\Big(X_1\in A\Big|X_0=x\Big)\mathbf{P}\Big(X_0=x\Big)
=\mathbf{P}\Big(X_1\in A\Big)$.
By iteration we obtain $\mathbf{T}^n\mu_0(A)=\mathbf{P}\Big(X_n\in A\Big)$.
Note that we have $\mathbf{P}\Big(X_{n+1}=w\Big|X_n=v\Big)>0$ precisely 
when $w$ and $v$ are neighbors. 

For any charge $\mu\in C_V$, we write $\mathrm{Supp}(\mu)$ for the support of $\mu$,
 i.e., $\mathrm{Supp}(\mu)=\{v\in V|\mu(\{v\})\neq0\}$. 
 An important 
 but straightforward property of the map $\mathbf{T}$, that is critical 
 in what follows, is given in Lemma \ref{l51p} below:
\begin{lemma}\label{l51p}
Let $\mathbf{T}$ and $\mu$ be as above. Assume that there is a 
cusp $\mathfrak{d}$ in $\mathfrak{g}$ such that
$V(\mathfrak{d})\cap\mathrm{Supp}(\mu)\neq\varnothing$.
 Then $$V(\mathfrak{d})\cap\left(\bigcup_{k=0}^\infty\mathrm{Supp}(\mathbf{T}^k\mu)\right)$$ is unbounded.
\end{lemma}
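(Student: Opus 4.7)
The plan is to prove, by induction on $k\geq 0$, that $v_{n_0+k}\in\mathrm{Supp}(\mathbf{T}^k\mu)$, where $v_0,v_1,v_2,\dots$ are the vertices of the ray $\mathfrak{d}$ in their natural order and
\[
n_0=\max\{n\geq 0 : v_n\in\mathrm{Supp}(\mu)\}.
\]
This maximum exists because charges are finitely supported and, by hypothesis, $\mathrm{Supp}(\mu)\cap V(\mathfrak{d})\neq\varnothing$. Once the claim is proved, the union $\bigcup_{k\geq 0}\mathrm{Supp}(\mathbf{T}^k\mu)$ meets $V(\mathfrak{d})$ in the infinite set $\{v_{n_0},v_{n_0+1},v_{n_0+2},\dots\}$, which is unbounded, and the lemma follows.

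The only structural input about cusps I would use is that, since $\mathfrak{d}$ is a ray attached to the center of $\mathfrak{g}$ solely at its initial vertex $v_0$, each vertex $v_n$ with $n\geq 1$ has \emph{exactly} the two neighbors $v_{n-1}$ and $v_{n+1}$ in the ambient graph $\mathfrak{g}$. Substituting this into the defining formula $\mathbf{T}\nu(\{v\})=\sum_{x}a_{x,v}\nu(\{x\})$ would yield, for any charge $\nu$ whose farthest cusp index $m:=\max\{n:v_n\in\mathrm{Supp}(\nu)\}$ is well-defined, the identity
\[
\mathbf{T}\nu(\{v_{m+1}\})=a_{v_m,v_{m+1}}\,\nu(\{v_m\}),
\]
because the only other potential contribution would come from $v_{m+2}$, which is not in $\mathrm{Supp}(\nu)$ by the choice of $m$. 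Simultaneously, for any $m'>m+1$ both neighbors of $v_{m'}$ have cusp index strictly larger than $m$, so $\mathbf{T}\nu$ vanishes on $\{v_{m'}\}$. Iterating these two observations with $\nu=\mathbf{T}^k\mu$ and $m=n_0+k$ advances the maximum cusp index by exactly one at each step, which is precisely the induction step.

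The subtle point worth flagging is that, in general, cancellations between different summands of $\mathbf{T}\nu(\{v\})$ could shrink the support of the iterate $\mathbf{T}^k\mu$, so a naive ``support propagates along edges'' principle would not suffice. The argument circumvents this because at the tip of the cusp only a single summand of $\mathbf{T}\nu(\{v_{m+1}\})$ contributes, so no cancellation can occur there. The one standing hypothesis consumed in the proof is that the weight $a_{v_m,v_{m+1}}$ is nonzero for every $m\geq n_0$, i.e.\ regularity of the WCFG along $\mathfrak{d}$; this is automatic for the quotient graphs $\mathfrak{wq}_P(\mathbb{O})$ arising from Bruhat--Tits trees, whose cusps carry strictly positive integer weights as recorded in Figure~\ref{sdoq}(B) and Proposition~\ref{p41}.
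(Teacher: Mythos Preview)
Your proof is correct and follows essentially the same approach as the paper: both arguments locate the last cusp vertex in the support, use that the next vertex along the ray has only two neighbors, and conclude that applying $\mathbf{T}$ pushes the support one step further out with no possibility of cancellation (since only one summand contributes). The paper presents just the single inductive step and leaves the iteration implicit, whereas you spell out the full induction and also verify explicitly that the maximum cusp index increases by \emph{exactly} one (not more), which is a tidy way to keep the invariant clean; both rely on regularity in the same place.
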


\begin{proof}
Note that the vertex set of a cusp has a natural order,
 where the vertex following $v\in V(\mathfrak{d})$ 
 is the neighbor of $v$
 that remains in the connected component containing the visual limit of $\mathfrak{d}$ when $v$ is removed.
By hypothesis, the set $V'$ of vertices $v\in V(\mathfrak{d})$ 
 satisfying $\mu(\{v\})\neq0$ is not empty.
 Since $\mu$ is finitely supported, the set $V'$ has a last
 element, with respect to the natural order.
 Call this last element $v_0$.
Let $v'$ be vertex following $v_0$ in the cusp,
and let $v''$ the vertex following $v'$.
Then
\[
\mathbf{T}\mu(\{v'\})=a_{v_0,v'}\mu(\{v_0\})+
a_{v'',v'}\mu(\{v''\})=a_{v_0,v'}\mu(v_0)\neq0,
\]
as the transitional probability between neighbors is non-zero by definition
of regular WCFG.
\end{proof}

For any subset $Y\subseteq V$ we denote by $C_Y$ the set of charges supported on $Y$.  By a shell for
a map $\mathbf{F}:C_V\rightarrow C_V$, we mean a set $W\subseteq V$ satisfying the following conditions:
\begin{enumerate}
\item $\mathbf{F}(C_W)\subseteq C_W$ and
\item $\mathbf{F}(C_{W^c})=0$.
\end{enumerate}

\begin{lemma}\label{l52}
Let $\mathbf{T}$, $\mathfrak{g}$ and $\mu$ be as before, and let $\mathbf{F}:C_V\rightarrow C_V$ be any map satisfying 
the following conditions:
 \begin{enumerate}
\item $\mathbf{F}$ and $\mathbf{T}$ commute.
\item $\mathbf{F}$ has a finite shell.
\end{enumerate}
Let $W\subseteq V$
be a minimal shell for $\mathbf{F}$. Then 
$W\cap V(\mathfrak{d})=\varnothing$ for every cusp 
$\mathfrak{d}$ of $\mathfrak{g}$. 
\end{lemma}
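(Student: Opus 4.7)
The plan is to argue by contradiction: assume $W$ is a finite minimal shell and $W\cap V(\mathfrak{d})\neq\varnothing$ for some cusp $\mathfrak{d}$; I will show that $W\setminus\{v_0\}$ is still a shell, where $v_0$ is the deepest vertex of $W\cap V(\mathfrak{d})$ with respect to the natural order on the cusp (it exists because $W$ is finite). Let $v_1$ and $v_2$ be the two successive vertices of $\mathfrak{d}$ beyond $v_0$. By choice of $v_0$ we have $v_1,v_2\in W^c$; crucially, being interior cusp vertices each of them has exactly two neighbors in $\mathfrak{g}$, namely the two adjacent cusp vertices.

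The first step is to check $\mathbf{F}(\delta_{v_0})=0$. Since $v_1$'s only neighbors are $v_0$ and $v_2$,
\[
\mathbf{T}\delta_{v_1}=a_{v_1,v_0}\delta_{v_0}+a_{v_1,v_2}\delta_{v_2}.
\]
Apply $\mathbf{F}$. By the commutation $\mathbf{F}\mathbf{T}=\mathbf{T}\mathbf{F}$ and condition (2) applied to $v_1\in W^c$, the left-hand side gives $\mathbf{T}\mathbf{F}(\delta_{v_1})=0$. The right-hand side collapses to $a_{v_1,v_0}\mathbf{F}(\delta_{v_0})$ because $\mathbf{F}(\delta_{v_2})=0$ for the same reason. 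Regularity of the WCFG gives $a_{v_1,v_0}>0$, hence $\mathbf{F}(\delta_{v_0})=0$.

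The second step is to show that $\mathbf{F}(\mu)(\{v_0\})=0$ for every $\mu\in C_{W\setminus\{v_0\}}$. Set $c=\mathbf{F}(\mu)(\{v_0\})$. By condition (1) we have $\mathbf{F}(\mu)\in C_W$, so $\mathbf{F}(\mu)(\{v_2\})=0$ since $v_2\in W^c$; because $v_1$'s only neighbors are $v_0$ and $v_2$, this forces
\[
\mathbf{T}\mathbf{F}(\mu)\bigl(\{v_1\}\bigr)=c\cdot a_{v_0,v_1}.
\]
On the other hand, decomposing $\mathbf{T}\mu$ along $W$ and $W^c$ and using both shell conditions yields $\mathbf{F}(\mathbf{T}\mu)\in C_W$, so by commutativity $\mathbf{T}\mathbf{F}(\mu)=\mathbf{F}(\mathbf{T}\mu)$ vanishes at $v_1\in W^c$. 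Regularity again gives $a_{v_0,v_1}>0$, so $c=0$.

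Putting the two steps together, $W\setminus\{v_0\}$ satisfies condition (2) (the first step, together with $\mathbf{F}(C_{W^c})=0$) and condition (1) (the second step, together with $\mathbf{F}(C_W)\subseteq C_W$), contradicting minimality of $W$. The key technical ingredient is the ``thinness'' of the cusp past $v_0$: the fact that $v_1$ has only $v_0$ and $v_2$ as graph neighbors is what allows one to extract $\mathbf{F}$-information at $v_0$ purely from the commutation relation applied to $W^c$-data. I do not anticipate a serious obstacle beyond setting up this local cusp picture correctly; the argument would fail at a vertex of higher valency, which is why restriction to cusps is essential.
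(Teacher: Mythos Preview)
Your proof is correct and follows essentially the same strategy as the paper: pick the last cusp vertex in $W$, then use the two-neighbor structure of the next cusp vertex together with commutativity to verify both shell conditions for $W\setminus\{v_0\}$, contradicting minimality. The paper's notation differs (it writes $v_t,v_{t+1},v_{t+2}$ for your $v_0,v_1,v_2$) and its second step is phrased as a full expansion of $\mathbf{T}\mathbf{F}(\mu)$ rather than an evaluation at $v_1$, but the content is identical.
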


\begin{proof}
 Let $\mathfrak{d}=\gamma(\mathfrak{N})$ be a cusp, where $\gamma:\mathfrak{N}\rightarrow\mathfrak{g}$
 is the associated ray.
We write $v_r=\gamma(n_r)$ for simplicity.
Assume $V(\mathfrak{d})\cap W\neq\varnothing$. 
  Since $W$ is finite, there is a last vertex
 $v_t$ in this intersection. 
We claim that $W'=W-\{v_t\}$ is also a shell for $\mathbf{F}$. This contradiction proves the lemma. 

To prove the claim we observe that, if $\mu_0=\delta_i\in C_V$ denotes the only probability measure supported
on $\{v_i\}$, and if $t\geq0$, then we have
\begin{equation}\label{probstep}
    \mathbf{T}(\delta_{t+1})=p\delta_t+q\delta_{t+2},
\end{equation}
 where $p$ and $q$ denote the following conditional 
probabilities:
 $$p=\mathbf{P}\Big(X_{n+1}=v_t\Big|X_n=v_{t+1}\Big), \qquad 
q=\mathbf{P}\Big(X_{n+1}=v_{t+2}\Big|X_n=v_{t+1}\Big).$$
Note that they are non-zero, since the WCFG is 
assumed to be regular.
Since $\mathbf{F}(\delta_{t+1})=\mathbf{F}(\delta_{t+2})=0$, it follows by 
applying $\mathbf{F}$ to the identity (\ref{probstep}) that
$\mathbf{F}(\delta_t)=0$. Since $C_{(W')^c}=C_{W^c}\oplus\mathbb{R}\delta_t$, 
we conclude that $W'$ satisfies the second condition in the definition of shell.
 Similarly, write $\mathbf{T}(\delta_t)=q'\delta_{t+1}+\nu$, where $\nu$ is a multiple of $\delta_{t-1}$
for $t>0$, and it is a measure supported on $ V(\mathfrak{d})^c\cup\{v_0\}$ when $t=0$.
Note that $q'\neq0$ by regularity. Now take any measure
$\mu\in C_{W'}$ and write $F(\mu)=\mu_0+s\delta_t$, 
where $\mu_0$ is supported on $W'$. This can be done since
$W$ is a shell. Then
$$\mathbf{F}\Big(\mathbf{T}(\mu)\Big)=
\mathbf{T}(\mu_0)+s\mathbf{T}(\delta_t)
=\mathbf{T}(\mu_0)+s\nu+sq'\delta_{t+1}.$$
Since $\mu$ is supported on $W'$, then $\mathbf{T}(\mu)$ is supported on $W$.
This contradicts the fact that $W$ is a shell, unless $s=0$.
The first condition in the definition of shell follows
for $W'$. This concludes the proof.
\end{proof}

Let $\mathfrak{wg}$ be a normalized regular WCFG
whose associated graph is $\mathfrak{g}$, and let $T$ be 
the corresponding transition matrix.
The obstruction space for $T$ is the vector space
defined by 
$$O_T=\left\{\mathbf{F}:C_V\rightarrow C_V| \mathbf{T}\mathbf{F}=\mathbf{F}\mathbf{T},
\ \mathbf{F}\textnormal{ has a finite shell}\right\}.$$
If we want to emphasize the map  $\mathbf{T}$ instead of the matrix $T$,
we write $O_\mathbf{T}$ instead.
A shell for $O_T$ is a set $W\subseteq V$ that is a shell for every element in $O_T$. A shell for
$O_T$ can be constructed as a union of minimal shells for its elements, whence next result follows:

 \begin{lemma}\label{l54}
The obstruction space $O_T$ has a shell $W$ intersecting trivially every
cusp of the graph. In particular, $W$ is finite.\qed
\end{lemma}

\begin{example}\label{ex1}
If $\mathfrak{g}$ is a cusp, then the empty set $W=\varnothing$ is a 
shell for the corresponding 
obstruction space $O_T$. We conclude that $O_T=\{0\}$.
This is the case, for example, when $\mathfrak{wg}=
\mathfrak{wq}_P(\mathbb{O})$  for the genus 
$\mathbb{O}=\mathbb{O}_0(\mathbb{P}^1)$ of maximal orders in the projective line.
See \cite[Th. 1.3]{cqqgvro} and \cite[\S II.2.4.1]{trees}.
\end{example}

\paragraph{Proof of Theorem \ref{t2}}
Let $\mathfrak{g}$ be the normalization of  $\mathfrak{wq}_P(\mathbb{O})$. Then 
$T=T_P(\mathbb{O})=\frac1{1+p}N_P(\mathbb{O})$ is the corresponding transition matrix. 
Let $N'_Q$ and $N''_Q$ be two possible candidates for the neighborhood matrix
$N_Q(\mathbb{O})$, i.e., two matrices having the right cusps, and commuting with $N_P(\mathbb{O})$.
Then their diference $F=N''_Q-N'_Q$ defines a map that belongs to the obstruction space $O_T$. 
It follows from Lemma \ref{l54} that both candidates
coincide in every matrix coordinate except maybe those corresponding to pairs of points
 in a minimal shell for $O_T$. The result follows.
\qed
 \vspace{2mm}
 
In order to prove Theorem \ref{t3}, we need a modified version of Lemma \ref{l54} that allows us to
ignore half edges. By a generalized cusp, or GC, we mean any graph obtained by attaching a finite number of 
half edges to the standard cusp.
 We add a half edge
to a cusp at a vertex $v_i$
 by forming the fibered co-product of the cusp and the 
 standard half edge, i.e., the graph shown
in Figure \ref{fi7}(A), by mean of the maps $\gamma$ and $\delta$
satisfying $\gamma_V(p)=v_i$, $\delta_V(p)=c$. 
This is similar to the process of adding cusps to a graph. 
See \S2 for the details. Generalized cusps
can be attached to the graph by exactly the same procedure as usual cusps. In fact, they have the same actual vertices
as a usual cusp. Next result is proven exactly as
Lemma \ref{l54}.

\begin{lemma}\label{l53}
Let $W\subseteq V$
be a minimal shell for $O_T$. Then $W$ cannot intersect non-trivialy any
GC in the graph. \qed
\end{lemma}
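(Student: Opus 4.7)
The plan is to mimic the proof of Lemma \ref{l52} (which was the engine behind Lemma \ref{l54}), adapting it to generalized cusps in place of ordinary cusps. The only substantive change is bookkeeping: a half edge attached at an actual vertex $v$ can contribute a self-loop term to $\mathbf{T}(\delta_v)$, so the transition identities used in the proof pick up extra $\delta_v$-summands. These extra summands turn out to be harmless.

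Assume for contradiction that there is a GC $\mathfrak{d}$ with actual vertices $v_0,v_1,v_2,\dots$, in the natural order inherited from the underlying ray, meeting $W$ non-trivially, and let $v_t$ be the last vertex of the intersection (well defined since $W$ is finite). I would show that $W'=W-\{v_t\}$ is again a shell for every element of $O_T$, contradicting the minimality of $W$.

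Since a half edge produces a virtual vertex of valency $1$, attaching half edges to a cusp does not introduce new actual neighbors of its vertices. Hence for every $t\geq 0$ we can write
$$\mathbf{T}(\delta_{t+1})=p\,\delta_t+q\,\delta_{t+2}+r\,\delta_{t+1},\qquad \mathbf{T}(\delta_t)=q'\,\delta_{t+1}+\nu+r'\,\delta_t,$$
with $p,q,q'>0$ by regularity, with $r,r'\ge0$ the (possibly vanishing) self-loop weights coming from half edges at $v_{t+1}$ and $v_t$, and with $\nu$ a measure supported in $V(\mathfrak{d})^c\cup\{v_{t-1}\}$ exactly as in the proof of Lemma \ref{l52}. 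Pick any $\mathbf{F}\in O_T$. Since $v_{t+1},v_{t+2}\notin W$ and $\mathbf{F}(C_{W^c})=0$, applying $\mathbf{F}$ to the first identity and using $\mathbf{F}\mathbf{T}=\mathbf{T}\mathbf{F}$ yields $0=p\,\mathbf{F}(\delta_t)$, so $\mathbf{F}(\delta_t)=0$. This verifies $\mathbf{F}(C_{(W')^c})=0$, the second condition of shell for $W'$.

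For the invariance condition, take $\mu\in C_{W'}$ and decompose $\mathbf{F}(\mu)=\mu_0+s\,\delta_t$ with $\mu_0\in C_{W'}$. Then
$$\mathbf{F}(\mathbf{T}(\mu))=\mathbf{T}(\mathbf{F}(\mu))=\mathbf{T}(\mu_0)+s\bigl(q'\,\delta_{t+1}+\nu+r'\,\delta_t\bigr),$$
and since the left-hand side is supported in $W$ while $v_{t+1}\notin W$, the coefficient $sq'$ must vanish, forcing $s=0$. Thus $\mathbf{F}(\mu)=\mu_0\in C_{W'}$, and $W'$ is a shell for every $\mathbf{F}\in O_T$, the desired contradiction. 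The only (mild) obstacle is to check that the self-loop summands $r\,\delta_{t+1}$ and $r'\,\delta_t$ do not disturb the argument; this is automatic because these terms only affect the coefficients of $\delta_{t+1}$ or $\delta_t$ on each side, which are handled directly by the vanishing of $\mathbf{F}(\delta_{t+1})$ and by the positivity of $q'$, respectively.
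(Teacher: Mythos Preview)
Your proposal is correct and takes essentially the same approach as the paper, which simply states that the result ``is proven exactly as Lemma \ref{l54}'' and leaves the details implicit. You have correctly spelled out those details: the only modification needed when passing from a cusp to a generalized cusp is the possible self-loop term $r\,\delta_{t+1}$ (resp.\ $r'\,\delta_t$) coming from a half edge, and as you observe these terms are harmless---the first is annihilated by $\mathbf{F}$ since $v_{t+1}\notin W$, and the second does not contribute to the $\delta_{t+1}$-coefficient used to force $s=0$.
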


\begin{figure}
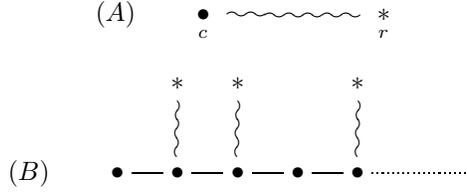

\[  (A) \qquad\xygraph{
!{<0cm,0cm>;<.8cm,0cm>:<0cm,.8cm>::}
!{(0,0)}*+{\bullet}="a" !{(0,-0.4)}*+{{}^{c}}="a1"
!{(3,0)}*+{*}="b" !{(3,-0.4)}*+{{}^{r}}="b1"
 "a"-@{~}"b"} \]
\[  (B) \qquad  \xygraph{
!{<0cm,0cm>;<.8cm,0cm>:<0cm,.8cm>::}
!{(3,0)}*+{\bullet}="a" !{(3,1.5)}*+{*}="a1"
!{(5,0)}*+{\bullet}="b" 
!{(4,0)}*+{\bullet}="c"  !{(4,1.5)}*+{*}="c1"
!{(6,0)}*+{\bullet}="x" !{(6,1.5)}*+{*}="x1"
!{(8,0)}*+{}="d"
!{(2,0)}*+{\bullet}="y"
 "a"-"c" "b"-"c" "b"-"x" "a"-"y" "x"-@{.}"d" 
"a"-@{~}"a1" "c"-@{~}"c1" "x"-@{~}"x1"
} \]
\caption{(A)The standard half edge. The half edge is denoted by a wiggly line 
and the virtual vertex by an asterisk. (B) Example of a cusp with three
half edges.}\label{fi7}
\end{figure}

\paragraph{Proof of Theorem \ref{t3}}
We say a vertex $v$ is an essential node if it satisfies $v=s(a)$ for 3 or more edges $a$, none of which is a half-edge.
The number of essential nodes is finite.
Reasoning as in the proof of Theorem \ref{t2}, 
it suffices to show that the empty set is a shell for  $O_T$. 
Let $\mathfrak{wg}$ be a connected component of  $\mathfrak{wq}_P(\mathbb{O})$. 
Let $v_f$ the only leaf of the graph obtained
by removing all half edges.  When there is no leaf $v_f$
 we can use instead an arbitrary (non-virtual) vertex in the tree that is not part
 of any cusp.
Since the associated graph $\mathfrak{g}$ 
is a tree, any essential node $w$ whose
distance to $v_f$ is maximal,
 connect two or more generalized cusps,
say $\mathfrak{gc}_1$ and  $\mathfrak{gc}_2$,
as in Figure \ref{fi8}(A), and a graph
$\mathfrak{gr}$ containing $v_f$.
Now we define a new graph $\mathfrak{g}'$
by deleting the GC $\mathfrak{gc}_1$
and redefining the transitional probabilities
at $w$ as illustrated in Figure \ref{fi8}(A). 
This produces a new WCFG with one less 
GC, and any 
shell of the original system is still a shell of this new system, as shown in the proof of Lemma \ref{l52}. 
By iterating this process we arrive eventually to either,
a WCFG with a single leaf and a single cusp, as in the example depicted in 
 Figure \ref{fi8}(B), or a full line, as the one shown in
Figure \ref{fi6}(B), with possibly a few attached half-edges in either case.
The first case  is handled as in Example \ref{ex1}. The second can be seen as a graph
 obtained by attaching two GCs at their initial vertices, so it can be treated analogously. The result follows.
\qed

\begin{figure}
\[  
\unitlength 0.8mm 
\linethickness{0.4pt}
\ifx\plotpoint\undefined\newsavebox{\plotpoint}\fi 
\begin{picture}(137.75,17)(0,0)
\put(17,12){\framebox(13.25,5)[cc]{$\mathfrak{gc}_1$}}
\put(17.25,3.75){\framebox(13.25,4.25)[cc]{$\mathfrak{gc}_2$}}
\put(46.5,11.75){\circle*{1.414}}
\put(30.5,14.5){\vector(-4,1){.07}}\multiput(46,12)(-.20666667,.03333333){75}{\line(-1,0){.20666667}}
\put(31.25,6){\vector(-3,-1){.07}}\multiput(46.25,12)(-.084269663,-.033707865){178}{\line(-1,0){.084269663}}
\put(61.5,9.5){\framebox(11,5)[cc]{$\mathfrak{gr}$}}
\put(46.5,12){\vector(1,0){14}}
\put(37.75,15){\makebox(0,0)[cc]{${}^a$}}
\put(39.75,6){\makebox(0,0)[cc]{${}^b$}}
\put(53.75,13){\makebox(0,0)[cc]{${}^c$}}
\put(48,9){\makebox(0,0)[cc]{${}^w$}}
\thicklines
\put(76.5,12.5){\vector(1,0){11}}
\put(76.5,15){\vector(1,0){11}}
\put(76.5,10){\vector(1,0){11}}
\put(93.25,10){\framebox(10.75,5)[cc]{$\mathfrak{gc}_2$}}
\put(116,12.25){\circle*{1.414}}
\put(116,12.25){\vector(-1,0){10.75}}
\put(127.75,9.25){\framebox(11,5)[cc]{$\mathfrak{gr}$}}
\put(116,12.25){\vector(1,0){11}}
\put(111,14){\makebox(0,0)[cc]{${}^{a+b}$}}
\put(121.5,14){\makebox(0,0)[cc]{${}^c$}}
\put(116,9){\makebox(0,0)[cc]{${}^w$}}
\put(5,10){\makebox(0,0)[cc]{(A)}}
\end{picture}
 \]
\[
\unitlength 1mm 
\linethickness{0.4pt}
\ifx\plotpoint\undefined\newsavebox{\plotpoint}\fi 
\begin{picture}(95.75,18.25)(0,180)
\put(7.25,194){\vector(-3,4){.07}}\multiput(10.75,189.5)(-.033653846,.043269231){104}{\line(0,1){.043269231}}
\put(11,189.25){\vector(-2,-3){3.5}}
\put(11,189){\line(1,0){11}}
\put(16,189){\line(0,1){7}}
\put(24,194.5){\vector(1,3){.07}}\multiput(21.75,189)(.03358209,.08208955){67}{\line(0,1){.08208955}}
\put(24.75,185.5){\vector(3,-4){.07}}\multiput(21.75,189.25)(.03370787,-.04213483){89}{\line(0,-1){.04213483}}
\put(38.25,191.25){\vector(-2,-3){3.5}}
\put(38.25,191){\line(1,0){11}}
\put(43.25,191){\line(0,1){7}}
\put(51.25,196.5){\vector(1,3){.07}}\multiput(49,191)(.03358209,.08208955){67}{\line(0,1){.08208955}}
\put(52,187.5){\vector(3,-4){.07}}\multiput(49,191.25)(.03370787,-.04213483){89}{\line(0,-1){.04213483}}
\put(64,191.25){\vector(-2,-3){3.5}}
\put(64,191){\line(1,0){11}}
\put(69,191){\line(0,1){7}}
\put(77,196.5){\vector(1,3){.07}}\multiput(74.75,191)(.03358209,.08208955){67}{\line(0,1){.08208955}}
\put(90.5,191.5){\vector(-2,-3){3.5}}
\put(95.5,191.25){\line(0,1){7}}
\put(90.75,191.5){\line(1,0){5}}
\put(30,190){\makebox(0,0)[cc]{$\Rightarrow$}}
\put(56,190){\makebox(0,0)[cc]{$\Rightarrow$}}
\put(82,190){\makebox(0,0)[cc]{$\Rightarrow$}}
\put(16,196){\makebox(0,0)[cc]{$\bullet$}}
\put(43.4,197.5){\makebox(0,0)[cc]{$\bullet$}}
\put(69.2,197.5){\makebox(0,0)[cc]{$\bullet$}}
\put(95.5,198){\makebox(0,0)[cc]{$\bullet$}}
\put(-3,190){\makebox(0,0)[cc]{(B)}}
\end{picture}
\]
\caption{Cusp reducing step in the proof of Theorem \ref{t3} (A),
and an example of succesive reductions (B).}\label{fi8}
\end{figure}
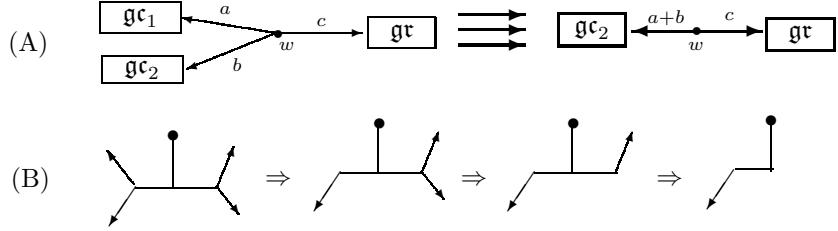
\vspace{2mm}

\paragraph{Proof of Theorem \ref{t5}}
This result is immediate from Theorem \ref{t2} and Theorem \ref{t3}
if we prove that the WRFQ $\mathfrak{wq}_Q(\mathbb{O}_D)$,
corresponding to the matrix 
$N_Q(\mathbb{O}_D)$, and the WCFG corresponding to the
matrix $f_n\big(N_P(\mathbb{O}_D)\big)$
have the same cusps. Now, if $v_D$ is a cusp vertex in
$\mathfrak{wq}_P(\mathbb{O}_D)$ corresponding to a maximal order 
of the form $\mathfrak{D}_D$, then the linear map $\mathbf{T}_P$
corresponding to $\mathfrak{wq}_P(\mathbb{O}_D)$ satisfies
$\mathbf{T}_P(\delta_{v_D})=\delta_{v_{D+P}}+
q\delta_{v_{D-P}}$.
On the other hand, the map  $\mathbf{T}_Q$
corresponding to $\mathfrak{wq}_Q(\mathbb{O}_D)$ satisfies
$\mathbf{T}_Q(\delta_{v_D})=\delta_{v_{D+nP}}+
q^n\delta_{v_{D-nP}}$,
since $Q$ is linearly equivalent to $nP$ and $\deg(Q)=n\deg(P)$.
It suffices now to check that both $\mathbf{T}_Q$
and $f_n(\mathbf{T}_P)$ have the same effect on 
$\delta_{v_D}$ whenever the degree of $D$ is large enough.
This follows from the fact that the polynomials $f_n$ have the property
$f_n(x+qx^{-1})=x^n+q^nx^{-n}$. The
latter result is proved by induction. It is trivial when $n=1$
or $n=2$, and the inductive step is handled
by a computation:
$$f_{n+2}\left(x+qx^{-1}\right)=
\left(x+qx^{-1}\right)f_{n+1}\left(x+qx^{-1}\right)
-qf_n\left(x+qx^{-1}\right)$$
$$=\left(x+qx^{-1}\right)\left(x^{n+1}+q^{n+1}x^{-n-1}\right)
-q\left(x^n+q^nx^{-n}\right)=x^{n+2}+q^{n+2}x^{-n-2}.$$
The result follows.

\section{Explicit computations of the obstruction}\label{sec6}

The purpose of this section is to examine some graph shapes 
that fail to allow an empty shield, thus illustrating
how the obstruction is explicitly computed in those 
examples. This means that presence of such shapes in the WRFQ  
$\mathfrak{wq}_P(\mathbb{O})$ indicates that
 $\mathfrak{wq}_Q(\mathbb{O})$ cannot be computed 
 from $\mathfrak{wq}_P(\mathbb{O})$
and the cusps alone.  In Ex. \ref{e65}, we also show
an example where the geometry of the graph alone fails to 
yield enough information to determine whether the 
obstruction set is trivial or not.

In all of this section we work with a normalized regular WRFQ
denoted by
$\mathfrak{wg}$, which is not assumed to come from a 
genus of orders. Some weights are not made explicit,
specially in the cusp at the right of every graph
in Figure \ref{soex},
but we note that the weights are assumed to be
positive for all pairs of neighboring vertices.
This assumption is all that is relevant for the examples.
A subgraph $\mathfrak{h}$ of  the associated graph
$\mathfrak{g}$ is called co-finite if $\mathfrak{g}$
is the union of $\mathfrak{h}$ and a finite graph.

\begin{example}
Assume that the normalized regular WCFG $\mathfrak{wg}$
has a symmetry $\sigma$ with a co-finite invariant subgraph.
Note that we assume $\sigma$ to preserve weights in the graph.
If $T_\sigma$ is the permutation matrix corresponding to this 
symmetry, and if $\mathbf{T}_\sigma$ is the corresponding
linear map, then $\mathbf{T}_\sigma-\mathbf{Id}$ is a 
linear map with a finite shell that commutes 
with the map $\mathbf{T}$ associated to $\mathfrak{wg}$.
Furthermore,
this is indeed the difference of two transition matrices, 
$\mathbf{T}_\sigma$ and the identity, so the sum
of the coefficients in each row vanishes. This is the case 
when $\mathfrak{wg}$ is as depicted in Figure \ref{soex}(A),
provided that the weights satisfy the relation $a=b$.
\end{example}

\begin{example}
We consider the WCFG depicted in Figure \ref{soex}(A)
and let $\mathbf{T}$ the corresponding
linear map $\mathbf{T}:C_V\rightarrow C_V$.
Any commuting map $\mathbf{F}$ with a finite
shell, must have a minimal shell $W$ contained in
$\{z,v\}$ by Lemma \ref{l54}. 
Since any set containing a shell
is a shell, it does not hurt if we assume $W=\{z,v,w\}$.
We let $\mathbf{P}=\mathbf{P}_W$ be the projection matrix
whose image is $C_W$ and whose kernel is $C_{W^c}$.
Then clearly $\mathbf{P}$ and $\mathbf{F}$ commute.
It follows that $\mathbf{F}$ is a map whose restriction
to $C_W$ has a matrix commuting with
$$T=\left(
\begin{array}{ccc}
0&0&a\\
0&0&b\\
1&1&0\\
\end{array}
\right),$$
with respect to the basis $\{\delta_v,\delta_z,\delta_w\}$.
Note that $T$ has always $0$ as an eigenvalue, regardless of the 
coefficients $a$ and $b$. In fact
the characteristic polynomial is $xF(x)$, where $F(x)=x^2-(a+b)$, 
whence we see that
$$F(T)=T^2-(a+b)\mathrm{Id}=
\left(
\begin{array}{ccc}
-b&a&0\\
b&-a&0\\
0&0&0\\
\end{array}
\right),$$
 is a matrix commuting with $T$ that fails to vanish 
 and have rows adding to $0$.
 Furthermore, the corresponding linear map has $\{z,v\}$ 
as a shell. We conclude that
whenever this graph arises as a quotient at a place $P$, 
the commuting relation and cusp structure 
fails to provide sufficient information to compute 
the neighborhood matrix at another place $Q$. 
Specifically, we cannot compute the weights $m_{v,z}$ and $m_{z,v}$, at $Q$, from the given data.
As before,
 $F(T)=\Big(F(T)+\mathrm{Id}\Big)-\mathrm{Id}$ is the 
difference of two transition matrices whenever $0\leq a,b\leq1$.
If $a+b\neq0$, so that $T$ has different eigenvalues, the matrix $T$
has a three dimensional centralizer, but only the multiples of
$F(T)$ correspond to linear maps with shell $\{z,v\}$.
In fact, in this case the map $\mathbf{F}$ corresponding to 
$F(T)$ spans the obstruction space $O_{\mathbf{T}}$.

If instead of working with the shell $W=\{v,z,w\}$ we use simply
$W'=\{v,z\}$, we arrive at the same result by searching for a
linear map $\mathbf{F}$ that commutes with
$\mathbf{P}_{W'}\mathbf{T}$ on $C_{W'}$ and satisfy
$$\mathbf{F}(a\delta_v+b\delta_z)=\mathbf{F}\mathbf{T}(\delta_w)
=\mathbf{T}\mathbf{F}(\delta_w)=0.$$
Note that the first condition is trivial as 
$\mathbf{P}_{W'}\mathbf{T}$ vanishes on $C_{W'}$.
\end{example}

\begin{figure}
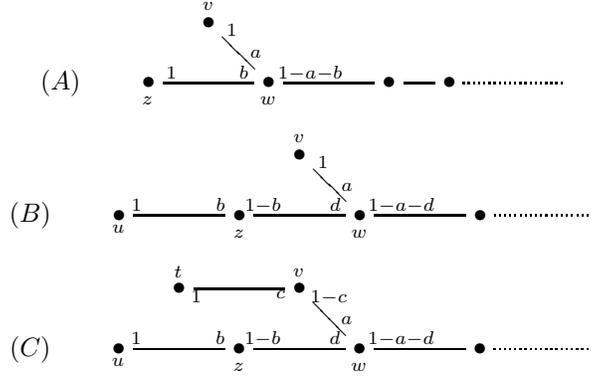

\[  (A) \qquad  \xygraph{
!{<0cm,0cm>;<.8cm,0cm>:<0cm,.8cm>::}
!{(2,0)}*+{\bullet}="a" !{(2,-0.3)}*+{{}_z}
!{(6,0)}*+{\bullet}="b" 
!{(4,0)}*+{\bullet}="c" !{(4,-0.3)}*+{{}_w}
  !{(3,1)}*+{\bullet}="c1" !{(3,1.2)}*+{{}^v}
!{(7,0)}*+{\bullet}="x"
!{(9,0)}*+{}="d"
 !{(2.4,.1)}*+{{}^1} !{(3.6,.1)}*+{{}^b} !{(3.8,.4)}*+{{}^a}  !{(3.4,.8)}*+{{}^1}
!{(4.7,.1)}*+{{}^{1-a-b}} 
 "a"-"c" "b"-"c" "b"-"x"  "x"-@{.}"d" 
"c"-"c1" 
} \]
\[  (B) \qquad  \xygraph{
!{<0cm,0cm>;<.8cm,0cm>:<0cm,.8cm>::}
!{(0,0)}*+{\bullet}="y" !{(0,-0.3)}*+{{}^u}
!{(2,0)}*+{\bullet}="a" !{(2,-0.3)}*+{{}_z}
!{(6,0)}*+{\bullet}="b" 
!{(4,0)}*+{\bullet}="c" !{(4,-0.3)}*+{{}_w}
  !{(3,1)}*+{\bullet}="c1" !{(3,1.2)}*+{{}^v}
!{(8,0)}*+{}="d"
 !{(2.4,.1)}*+{{}^{1-b}} !{(3.6,.1)}*+{{}^d} !{(3.8,.4)}*+{{}^a}  !{(3.4,.8)}*+{{}^1}
 !{(1.7,.1)}*+{{}^b}  !{(.3,.1)}*+{{}^1}
!{(4.7,.1)}*+{{}^{1-a-d}} 
 "a"-"c" "b"-"c" "b"-@{.}"d" "y"-"a"
"c"-"c1" 
} \]
\[  (C) \qquad  \xygraph{
!{<0cm,0cm>;<.8cm,0cm>:<0cm,.8cm>::}
!{(1,1)}*+{\bullet}="t" !{(1,1.2)}*+{{}^t}
!{(0,0)}*+{\bullet}="y" !{(0,-0.3)}*+{{}^u}
!{(2,0)}*+{\bullet}="a" !{(2,-0.3)}*+{{}_z}
!{(6,0)}*+{\bullet}="b" 
!{(4,0)}*+{\bullet}="c" !{(4,-0.3)}*+{{}_w}
  !{(3,1)}*+{\bullet}="c1" !{(3,1.2)}*+{{}^v}
!{(8,0)}*+{}="d"
 !{(2.4,.1)}*+{{}^{1-b}} !{(3.6,.1)}*+{{}^d} !{(3.8,.4)}*+{{}^a}  !{(3.5,.8)}*+{{}^{1-c}}
 !{(1.7,.1)}*+{{}^b}  !{(.3,.1)}*+{{}^1}
!{(4.7,.1)}*+{{}^{1-a-d}} 
!{(2.7,.9)}*+{{}_c}  !{(1.3,.8)}*+{{}^1}
 "a"-"c" "b"-"c" "b"-@{.}"d" "y"-"a" "t"-"c1"
"c"-"c1" 
}
 \]
\caption{Some examples of NWCFGs.}\label{soex}
\end{figure}

To simplify the task of actually checking that the obstruction space 
$O_{\mathbf{T}}$  is trivial, we can use next result:

\begin{proposition}
Let $\mathfrak{g}$ be a WCFG, and let $\mathbf{T}$ be the associated 
linear map. Let $W$ be a finite shell for $O_{\mathbf{T}}$. 
If $O_{\mathbf{T}}$ is non-trivial, then $\mathbf{T}$ has a
non-trivial eigenvector
in $\mathbb{C}\otimes_{\mathbb{R}}C_W$.
\end{proposition}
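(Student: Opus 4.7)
The plan is to leverage the shell condition on $\mathbf{F}$ to produce a finite-dimensional, nonzero, $\mathbf{T}$-invariant subspace of $\mathbb{C}\otimes_{\mathbb{R}} C_W$ and then invoke the existence of eigenvalues for complex endomorphisms of finite-dimensional spaces.

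More concretely, since $O_{\mathbf{T}}$ is non-trivial, I would fix a nonzero $\mathbf{F}\in O_{\mathbf{T}}$. Because $W$ is a shell for all of $O_{\mathbf{T}}$, this $\mathbf{F}$ satisfies $\mathbf{F}(C_W)\subseteq C_W$ and $\mathbf{F}(C_{W^c})=0$. The first step is to observe that the image $U=\mathrm{Im}(\mathbf{F})$ lies entirely inside $C_W$: any $\mu\in C_V$ decomposes uniquely as $\mu_W+\mu_{W^c}$ with $\mu_W\in C_W$ and $\mu_{W^c}\in C_{W^c}$, so $\mathbf{F}(\mu)=\mathbf{F}(\mu_W)\in C_W$. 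Since $\mathbf{F}\neq 0$ the subspace $U$ is nonzero, and since $U\subseteq C_W$ and $W$ is finite, $U$ is finite-dimensional.

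The second step is to check that $U$ is stable under $\mathbf{T}$, using the defining commutation $\mathbf{T}\mathbf{F}=\mathbf{F}\mathbf{T}$: for $v=\mathbf{F}(\mu)\in U$ one has $\mathbf{T}(v)=\mathbf{T}\mathbf{F}(\mu)=\mathbf{F}\mathbf{T}(\mu)\in U$. Consequently, the complexified operator $\mathbf{T}$ restricts to a linear endomorphism of the nonzero finite-dimensional complex vector space $\mathbb{C}\otimes_{\mathbb{R}}U\subseteq \mathbb{C}\otimes_{\mathbb{R}}C_W$. Any such endomorphism admits an eigenvalue $\lambda\in\mathbb{C}$ and a corresponding nonzero eigenvector, which is the desired non-trivial eigenvector of $\mathbf{T}$ inside $\mathbb{C}\otimes_{\mathbb{R}}C_W$.

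In fact this proof is essentially formal once the shell formalism is in place, so there is no genuine obstacle; the only point that requires a moment of care is the verification that $\mathrm{Im}(\mathbf{F})$ already sits inside $C_W$ (not merely inside $C_V$), which is where the second shell axiom $\mathbf{F}(C_{W^c})=0$ is used in an essential way. If one wishes to extract more information, the same argument applied to an eigenvector of the finite-dimensional operator $\mathbf{F}|_{C_W}$ shows that a common eigenvector for $\mathbf{T}$ and $\mathbf{F}$ can be found in $\mathbb{C}\otimes_{\mathbb{R}}C_W$, which is what is actually being exploited in the example computations preceding the proposition.
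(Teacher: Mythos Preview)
Your proof is correct and follows essentially the same approach as the paper: pick a nonzero $\mathbf{F}\in O_{\mathbf{T}}$, observe that $\mathbf{F}(C_V)\subseteq C_W$ is a nonzero finite-dimensional $\mathbf{T}$-invariant subspace (via the shell axioms and the commutation $\mathbf{T}\mathbf{F}=\mathbf{F}\mathbf{T}$), and conclude by elementary linear algebra over $\mathbb{C}$. Your write-up simply makes explicit the decomposition $\mu=\mu_W+\mu_{W^c}$ that the paper leaves implicit.
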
   

\begin{proof}
Let $\mathbf{F}\in O_{\mathbf{T}}\smallsetminus\{0\}$.
By definition of shell we have $\mathbf{T}\mathbf{F}(C_V)
\subseteq \mathbf{F}(C_V)\subseteq C_W$, whence
$\mathbf{F}(C_V)$ is a non-null finite dimensional vector space fixed 
by $\mathbf{T}$. Then the result follows from elementary linear algebra.
\end{proof}

\begin{example}
We consider a transition matrix $T$ associated to the normalized regular
WCFG $\mathfrak{wg}$ depicted in Figure 7(B), with $a$ and $b$ nontrivial. 
The matrix of $T$ has the form
$$ T=\bmattrix {T'}XYZ,\quad\textnormal{where}\quad
T'=\left(
\begin{array}{cccc}
0&0&0&a\\
0&0&b&0\\
0&1&0&d\\
1&0&1-b&0\\
\end{array}
\right)$$
is the block corresponding to the subspace $\langle\delta_v,\delta_u,\delta_z,\delta_w\rangle$.
Note that $X$ has a non-zero coefficient only in the last row.
We claim that the associated linear map
$\mathbf{T}$ has no eigenvectors in
$\mathbb{C}\otimes_{\mathbb{R}}C_W$, where $W=\{v,u,z\}$. 
In fact, let $\mu$ be a complex valued measure supported
on $\{u,z,v\}$ that satisfies $\mathbf{T}(\mu)=\lambda\mu$. 
Note that 
$$\lambda\mu(\{v\})=\mathbf{T}(\mu)(\{v\})=a\mu(\{w\})=0.$$
If $\lambda\neq0$, we have $\mu(\{v\})=0$ and, since 
$$0=\mathbf{T}(\mu)(\{w\})=(1-b)\mu(\{z\})+1\cdot\mu(\{v\}),$$
we get $\mu(\{z\})=0$.
Note that we can ignore the remaining neighbor of $w$ 
since $\mu$ is supported on $\{u,v,z\}$.
 Furthermore, $$\lambda\mu(\{u\})=\mathbf{T}(\mu)(\{u\})=
 b\mu(\{z\})=0.$$ Hence, we get $\mu(\{u\})=0$, 
 and therefore $\mu=0$.
On the other hand, if $\lambda=0$, then $\mathbf{T}(\mu)=0$. 
If $\bar u$ denotes the row corresponding to
$\mu$, which has non-zero coefficients only in the 
first three positions, then we have the matrix identity 
$T\bar{u}=0$.
If $\hat{u}$ denotes the column of size $4$ obtained by 
cropping the first 
four coefficients of $\bar{u}$,
we claim that $T'\hat{u}=0$. The only non-trivial part 
of the claim is that $X$ has a non-zero coefficient in the fourth
row, as there is a vertex outside $W$ connected to $w$. 
However, the coefficient in $\bar{u}$ corresponding to
that vertex is zero. Since $T'$ is invertible, and since
$a$ and $b$ are nontrivial, it follows that $\hat{u}=0$, and hence
$\mu=0$. We conclude that the obstruction space $O_{\mathbf{T}}$
is trivial in this case. 
\end{example}

\begin{example}\label{e65}
We consider the WCFG depicted in Figure \ref{soex}(C),
with $0\notin\{a,b,c,d,1-b,1-c,1-a-d\}$.
Then, if the canonical basis of the charge space is ordered as
$(\delta_t,\delta_v,\delta_u,\delta_z,\delta_w,\dots)$,
then the matrix $T$ has a $5\times5$ block, on the upper left corner, of the form
 $\widetilde{A}=\sbmattrix A{\stackrel{\rightarrow}{r}^t}{\stackrel{\rightarrow}{s}}0$,
where the blocks are defined as follows:
\begin{itemize}
\item $A=\sbmattrix {A_1}00{A_2}$, where  $A_1=\sbmattrix 0c10$ and $A_1=\sbmattrix 0b10$.
\item $\stackrel{\rightarrow}{r}=(0\ a\ 0\ d)$, and
$\stackrel{\rightarrow}{s}=(0\ 1-c\ 0\ 1-b)$.
\end{itemize}

Again, we are reduced to find eigenvectors $\mu$ of  
$\mathbf{T}$ supported on $\{t,v,u,z\}$.  Assume $\mathbf{T}(\mu)=\lambda\mu$. 
Then $\lambda\mu(\{t\})=c\mu(\{v\})$ and, since $\mu(\{w\})=0$, we have
$\lambda\mu(\{v\})=\mu(\{t\})$. Combining these two equations we have
$$\lambda^2\mu(\{t\})=\lambda c\mu(\{v\})=c\mu(\{t\}),$$
 whence $\lambda^2=c$, unless $\mu(\{t\})=0$, and in the latter case we 
also have $\mu(\{v\})=0$, since $c\neq0$. Furthermore,
$\mu(\{v\})=0$ also implies $\mu(\{t\})=0$.
We prove similarly that $\lambda^2=b$,
unless $\mu(\{u\})=0$, and the latter case is equivalent to $\mu(\{z\})=0$.

Now, if either $\mu(\{v\})$ of $\mu(\{z\})$ vanishes,
 the condition 
$$0=\lambda\mu(\{w\})=(1-c)\mu(\{v\})+(1-b)\mu(\{z\})$$
shows that so does the other, whence we have $\mu=0$.
We conclude that $b=c$ is a necessary condition to have a non-trivial eigenvalue in this
case. Since the existence of a symmetry is a sufficient condition, we have shown that
the geometry of the graph fails to yield enough information 
to determine whether 
the obstruction space is trivial or not in this case. 
\end{example}

\begin{example}
Figure \ref{nf9} shows some aditional examples. In both (A) and (B) is possible to find weights
with either a trivial or a non-trivial obstruction space. In (D), the shell is empty since every vertex is in some cusp,
so there is no obstruction in this case. 
In (C) we arrive at the same result by showing that a non-trivial map 
whose shell is $\{v\}$
cannot commute with $\mathbf{T}$.
\end{example}

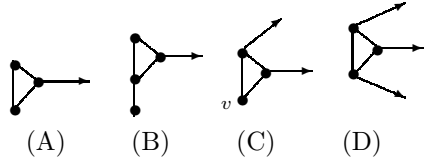
\begin{figure}
\[
\unitlength 1.4mm 
\linethickness{0.4pt}
\ifx\plotpoint\undefined\newsavebox{\plotpoint}\fi 
\begin{picture}(58.75,12)(0,152)
\put(19.75,158.5){\line(0,-1){4.75}}
\multiput(19.75,153.75)(.03333333,.03666667){75}{\line(0,1){.03666667}}
\multiput(22.25,156.5)(-.04166667,.03333333){60}{\line(-1,0){.04166667}}
\put(31.25,161){\line(0,-1){4.75}}
\multiput(31.25,156.25)(.03333333,.03666667){75}{\line(0,1){.03666667}}
\multiput(33.75,159)(-.04166667,.03333333){60}{\line(-1,0){.04166667}}
\put(41.5,159.5){\line(0,-1){4.75}}
\multiput(41.5,154.75)(.03333333,.03666667){75}{\line(0,1){.03666667}}
\multiput(44,157.5)(-.04166667,.03333333){60}{\line(-1,0){.04166667}}
\put(52,161.75){\line(0,-1){4.75}}
\multiput(52,157)(.03333333,.03666667){75}{\line(0,1){.03666667}}
\multiput(54.5,159.75)(-.04166667,.03333333){60}{\line(-1,0){.04166667}}
\put(22.3,156.55){\vector(1,0){4.75}}
\put(33.75,159){\vector(1,0){4}}
\put(44.25,157.5){\vector(1,0){4}}
\put(45.25,162.5){\vector(4,3){.07}}\multiput(41.75,159.75)(.04268293,.03353659){82}{\line(1,0){.04268293}}
\put(54.75,159.75){\vector(1,0){4}}
\put(57,164){\vector(2,1){.07}}\multiput(52,161.75)(.07462687,.03358209){67}{\line(1,0){.07462687}}
\put(57,155){\vector(2,-1){.07}}\multiput(52.25,157)(.07916667,-.03333333){60}{\line(1,0){.07916667}}
\put(31.25,156.25){\line(0,-1){3}}
\put(21.6,155.85){$\bullet$}\put(19.4,157.45){$\bullet$}\put(19.4,153.15){$\bullet$}
\put(33.2,158.25){$\bullet$}\put(30.8,160){$\bullet$}\put(30.8,153.15){$\bullet$}\put(30.8,156.15){$\bullet$}
\put(43.2,156.65){$\bullet$}\put(41,158.65){$\bullet$}\put(41,154.15){$\bullet$}
\put(53.8,158.85){$\bullet$}\put(51.5,160.95){$\bullet$}\put(51.5,156.55){$\bullet$}
\put(21,150){(A)}\put(31,150){(B)}\put(41,150){(C)}\put(51,150){(D)}
\put(39.5,153){${}^v$}
\end{picture}
\]
\caption{Some aditional examples.}\label{nf9}
\end{figure}

\section{Some explicit quotients at $P$}\label{intro}

At this point, we need to work out some explicit examples of 
quotient graphs for Eichler orders before we can apply our 
main results. 
For the explicit examples computed here, we set 
$X=\mathbb{P}^1$, and therefore  $K=\mathbb{F}(\mathbf{t})$. 
We compute the classifying graph at the degree-1 infinite place
$P=P_\infty$, for some genera of Eichler orders of small degree. 
In order to do this we recall the structure of the classifying 
graph $\mathfrak{c}_P(\mathbb{O}_0)$ for maximal orders, which
is the graph depicted in Figure \ref{f10}(A). 
This was computed by Serre in  \cite[\S II.1.6]{trees}.
We perform our computations by first looking at principal congruence 
subgroups $\Gamma(f)$ of 
$\mathrm{GL}_2(\mathbb{F}[\mathbf{t}])$, and then obtain classifying 
graphs as quotients of $\Gamma(f)\backslash\mathfrak{t}$.

Note that the completion at $P$ is the field $K_P=\mathbb{F}(\mathbf{t}^{-1}))$, the ring of integers is
$\mathcal{O}_P=\mathbb{F}[[\mathbf{t}^{-1}]]$, and $\mathbf{t}^{-1}$ is a uniformizing parameter.
We use $a\mapsto v_P(a)$ for the valuation and $a\mapsto |a|_P$ for the absolute value on $K_P$.
To make the Bruhat-Tits tree as explicit as possible, we identify it with the ball tree, whose vertices 
correspond to closed balls $B$ in $K_P$.
 Two such balls are neighbors if one is a maximal 
 proper sub-ball 
of the other. The local maximal order corresponding to the ball $B=B_a^{[r]}$ with center $a$ and radius
$|\mathbf{t}^{-r}|_\infty$ is
$$\Da[B]=\bmattrix a{\mathbf{t}^{-r}}10\mathbb{M}_2(\oink_P){\bmattrix a{\mathbf{t}^{-r}}10}^{-1}=
\mathrm{End}_{\oink_P}\left\langle\left(\begin{array}{c}a\\1\end{array}\right),
\left(\begin{array}{c}\mathbf{t}^{-r}\\0\end{array}\right)\right\rangle.$$
The natural action of $\mathrm{PGL}_2(K)$, by conjugation,
on maximal orders, corresponds to an action
on balls that we define by $\Da[a*B]=a\Da[B]a^{-1}$.
It is not hard to see that visual limits of the ball 
tree can be identified with  the elements of
the set $\mathbb{P}^1(K_P)$, in a way that 
$\infty$ is the visual limit of any ray whose
vertices, in the natural order, correspond to
an increasing sequence of balls,
while a finite element $a\in K_P$ is the visual limit of a 
ray whose vertices correspond to a decreasing sequence
of balls with intersection $\{a\}$.
With these conventions the group $\mathrm{PGL}_2(K)$
acts on visual limits via Moebius transformations.
The ball tree is usually drawn with the infinite 
visual limit on top and 
``finite'' visual limits at the bottom,
as shown in Figure \ref{f10}(B). We use the convention of 
denoting visual limits by stars.
This identification allows us to easily describe a
fundamental region for the action of
the normalizer $N=\mathbb{M}_2(\mathbb{F}[\mathbf{t}])^*$. 
It is the ray $\mathfrak{r}$ whose vertices are the 
balls of the form $B_0^{[-n]}$,
where $n$ is a positive integer, 
as Figure \ref{f10}(A) shows. 

We recall here a sketch of the proof of the fact that
$\mathfrak{r}$ is indeed a fundamental region.
One can prove that $\mathfrak{r}$ contains a fundamental region,
by finding explicit elements in $N$ that maps all edges outside 
this region
to an edge inside the region. By an inductive argument, 
it suffices to do it for edges that are adjacent to
$\mathfrak{r}$. The latter is easy if you recall that 
$\mathrm{GL}_2(K_P)$ acts on this
tree via Moebius transformations, and note the following fact:
\begin{quote}
\textbf{Fact 1:}
If $g\in \mathbb{F}[\mathbf{t}]$ has degree $d$, then the group
of transformations of the form $\mu(z)=z+ag$, for
$a\in\mathbb{F}$, fixes $B_0^{[-d]}$, and permutes transitively
all its maximal proper sub-balls.
\end{quote}
For $d=0$ you get the additional transformation $\eta(z)=\frac1{z}$,
so all neighbors of  $B_0^{[0]}$ are in the same orbit.
Finally, you can see 
that  $\mathfrak{r}$ is indeed a fundamental 
region, by observing that the global order corresponding 
to $B_0^{[-n]}$ is $\Da_{nP}$,
and this orders have non-isomorphic rings of global 
sections for different values of $n>0$, as it is proved in
 \cite[Lemma 6.3]{ogcseooff}.

\begin{figure}
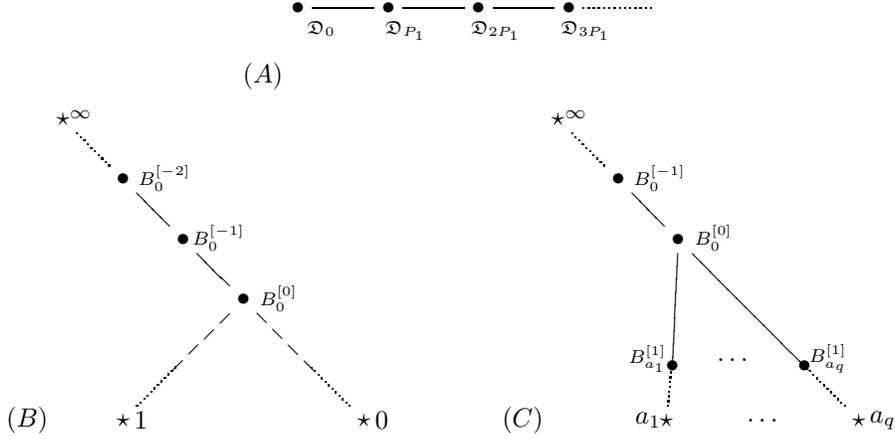

\[  (A)\xygraph{
!{<0cm,0cm>;<.8cm,0cm>:<0cm,.8cm>::}
!{(0,4)}*+{\bullet}="a" !{(0.4,3.6)}*+{{}^{\Da_0}}="a1"
!{(1.5,4)}*+{\bullet}="b" !{(1.8,3.6)}*+{{}^{\Da_{P_1}}}="b1"
!{(3,4)}*+{\bullet}="c" !{(3.3,3.6)}*+{{}^{\Da_{2P_1}}}="c1"
!{(4.5,4)}*+{\bullet}="x" !{(4.8,3.6)}*+{{}^{\Da_{3P_1}}}="x1"
!{(6,4)}*+{}="d" !{(2,3)}*+{}="e"
 "a"-"b" "b"-"c" "c"-"x" "x"-@{.}"d"
} \]
\[
 (B)\xygraph{
!{<0cm,0cm>;<.8cm,0cm>:<0cm,.8cm>::}
!{(3,2)}*+{\bullet}="a" !{(3.6,2)}*+{{}^{B_0^{[0]}}}="a1"
!{(2,3)}*+{\bullet}="b" !{(2.6,3)}*+{{}^{B_0^{[-1]}}}="b1"
!{(1,4)}*+{\bullet}="c" !{(1.7,4)}*+{{}^{B_0^{[-2]}}}="c1"
!{(0,5)}*+{\star}="x" !{(0.3,5)}*+{{}^{\infty}}="x1"
!{(1,0)}*+{\star}="d" !{(2,1)}*+{}="dm" !{(1.7,0.7)}*+{}="dn" !{(1.3,0)}*+{1}="d1"
!{(5,0)}*+{\star}="e" !{(4,1)}*+{}="em" !{(4.3,0.7)}*+{}="en" !{(5.3,0)}*+{0}="e1"
 "a"-"b" "b"-"c" "c"-@{.}"x" "a"-@{--}"dn" "a"-@{--}"en" "dm"-@{.}"d" "em"-@{.}"e"
} \qquad\qquad
 (C)\xygraph{
!{<0cm,0cm>;<.8cm,0cm>:<0cm,.8cm>::}
!{(2,3)}*+{\bullet}="b" !{(2.6,3)}*+{{}^{B_0^{[0]}}}="b1"
!{(1,4)}*+{\bullet}="c" !{(1.7,4)}*+{{}^{B_0^{[-1]}}}="c1"
!{(0,5)}*+{\star}="x" !{(0.3,5)}*+{{}^{\infty}}="x1"
!{(1.8,0)}*+{\star}="d" !{(1.9,1)}*+{}="dm" !{(1.9,0.9)}*+{\bullet}
!{(1.9,0.7)}*+{}="dn" !{(1.5,0)}*+{a_1}="d1" !{(1.5,1)}*+{{}^{B_{a_1}^{[1]}}}
!{(5,0)}*+{\star}="e" !{(4,1)}*+{}="em" !{(4.1,0.9)}*+{\bullet}
!{(4.3,0.7)}*+{}="en" !{(5.4,0)}*+{a_q}="e1" !{(4.5,1)}*+{{}^{B_{a_q}^{[1]}}}
!{(2.9,1)}*+{\dots} !{(3.4,0)}*+{\dots}
  "b"-"c" "c"-@{.}"x" "b"-"dn" "b"-"en" "dm"-@{.}"d" "em"-@{.}"e"
}
\]\caption{The quotient of the Bruhat-Tits tree by the modular group (A), and 
one fundamental region in the ball tree (B).
In (C) we have a fundamental region for the action of $\Gamma(N)$ for a 
linear polynomial $N$, if $\mathbb{F}=\{a_1,\dots,a_q\}$.
}\label{f10}
\end{figure}

Next, we describe the structure of the quotient graph for the principal congruence subgroup
$$\Gamma(f)=\left\{\bmattrix {1+fa}{fb}{fc}{1+fd}\in\mathrm{GL}_2(\mathbb{F}[\mathbf{t}])
\Big|\ a,b,c,d\in\mathbb{F}[\mathbf{\mathbf{t}}]\right\},$$
for some particular polynomials $f$. Note that $\Gamma(1)=\mathrm{GL}_2(\mathbb{F}[\mathbf{t}])$ and
 $\Gamma(0)=\{1_{\mathbb{M}_2(\mathbb{F}[t])}\}$.
The complexity of these graphs increase quickly with both 
$q=|\mathbb{F}|$ and $\deg(f)$, so we restrict the explicit 
examples to the cases where, either $\deg(f)=1$, or $\deg(f)=q=2$.
However, we start with some general considerations for arbitrary $f$, 
by studying the natural covering  
 $\phi:\Gamma(f)\backslash\mathfrak{t}
 \twoheadrightarrow
\Gamma(1)\backslash\mathfrak{t}$.
A vertex in $\Gamma(f)\backslash\mathfrak{t}$ is called a
type-$n$ vertex if its image in
the ray $\Gamma(1)\backslash\mathfrak{t}$ 
is the class of the ball $B_0^{[-n]}$.
We apply the same convention to the Bruhat-tits 
tree $\mathfrak{t}=\Gamma(0)\backslash\mathfrak{t}$.
A type-$n$ vertex, for $n>0$, can have only type-$(n-1)$ 
and type-$(n+1)$ neighbors.
Type-$0$ vertices only have type-$1$ neighbors. Furthermore,
by Fact 1 above, all but one of the neighbors of a type-$n$ 
vertex are type-$(n-1)$ vertices. It also follows 
from Fact 1 that
you can permute the sub-balls of $B_0^{[-n]}$ by a Moebius transformation corresponding to a matrix in $\Gamma(f)$ when
the polynomial $g$ with $\deg(g)=d$ can be a multiple
of $f$. We conclude that the class of the ball $B_0^{[-n]}$,
for $n\geq\deg(f)$, is unramified for $\phi$. 
A similar argument show that the balls $B_0^{[-k]}$ for 
$0\leq k<\deg(f)$ are unramified for the natural cover $\mathfrak{t}\twoheadrightarrow\Gamma(f)\backslash\mathfrak{t}$.
Let $M_n$ denote the number of type-$n$ vertices in $\Gamma(f)\backslash\mathfrak{t}$.
Then  $M_{n-1}=M_n=M_{n+1}=\dots$, 
as the corresponding vertices are contained in  the 
same amount of cusps. For $0<k<\deg(f)$, we have 
$M_{k-1}=qM_k$, while
$(q+1)M_0=qM_1$, for $\deg(f)>1$. When $\deg(f)=1$ the 
relation reads $(q+1)M_0=M_1$. 

If $n=\deg(f)=1$, the only ramified vertices for $\phi$ are type-$0$ vertices, so the quotient graph in this case
consist on $q+1$ cusps attached to a single vertex.  
In this case, the quotient graph looks like the one 
depicted in Figure \ref{f10}(C).
This is shown in more detail in 
\cite[\S II.1.6, Ex. 5]{trees}.

Now assume $n=\deg(f)=2$.
Let  $\mathfrak{a}=\mathfrak{a}_{0,1}(f)$ be the largest  subgraph of  $\Gamma(f)\backslash\mathfrak{t}$ 
having only type-$0$ and type $1$ vertices. Then  $\Gamma(f)\backslash\mathfrak{t}$  is obtained by attaching
a cusp to every type-$1$ vertex in $\mathfrak{a}$. If we further assume $q=2$, then type-$0$ vertices have
valency $3$ in $\mathfrak{a}$, while type-$1$ vertices have valency $2$. We conclude that $\mathfrak{a}$
is the baricentric subdivision of a homogeneous
graph $\mathfrak{o}$ with only 
valency-3 vertices. We identify 
this graph  $\mathfrak{o}$ for different values of $f$.

\begin{theorem}\label{t12}
For quadratic polynomials $f$  in $\finitum_2[\mathbf{t}]$, the graph $\mathfrak{o}$ defined above
is as follows:
\begin{itemize}
\item If $f=\mathbf{t}(\mathbf{t}+1)$, then $\mathfrak{o}$ is the full bipartite graph $K_{3,3}$.
\item If $f$ is a perfect square, then $\mathfrak{o}$ is the 1-skeleton of a cube.
\item If $f=\mathbf{t}^2+\mathbf{t}+1$, then $\mathfrak{o}$ is the Petersen graph.
\end{itemize}
\end{theorem}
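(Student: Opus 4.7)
The plan is to identify the graph $\mathfrak{o}$ in each of the three cases via the natural $G$-action, where $G := \Gamma(1)/\Gamma(f) \cong \mathrm{SL}_2(\mathbb{F}_2[\mathbf{t}]/(f))$. Since $\mathrm{SL}_2$ over a Euclidean ring is generated by elementary matrices, the reduction map $\mathrm{SL}_2(\mathbb{F}_2[\mathbf{t}]) \twoheadrightarrow G$ is surjective; hence $G$ acts transitively on $V(\mathfrak{o})$ with vertex stabilizer the embedded copy $H$ of $\mathrm{SL}_2(\mathbb{F}_2)$ (order $6$), so $|V(\mathfrak{o})| = |G|/6$. By direct computation of the target ring $\mathbb{F}_2[\mathbf{t}]/(f)$ in each case (namely $\mathbb{F}_2 \times \mathbb{F}_2$, $\mathbb{F}_2[\varepsilon]$, and $\mathbb{F}_4$), this gives $|G|=36, 48, 60$ and therefore vertex counts $6, 8, 10$, matching $K_{3,3}$, the cube, and the Petersen graph.

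For the edge structure, I would observe that the three type-$1$ tree-neighbors of $B_0^{[0]}$ correspond to the three $\mathbb{F}_2$-lines in $(\mathcal{O}_P/\pi)^2$, and that for each such line the other type-$0$ endpoint of the resulting edge in $\Gamma(f)\backslash\mathfrak{t}$ can be computed as the action on $B_0^{[0]}$ of an explicit polynomial matrix of determinant $1$; concretely, these three matrices are $\bmattrix{\mathbf{t}}{1}{1}{0}$, $\bmattrix{1}{0}{\mathbf{t}}{1}$, and $\bmattrix{1+\mathbf{t}}{1}{\mathbf{t}}{1}$, the last being obtained from the middle one by left-multiplication by $\bmattrix{1}{1}{0}{1}\in H$. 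Reducing modulo $f$ yields three cosets in $G/H$ describing the three neighbors of the base vertex, and by $G$-homogeneity the entire graph $\mathfrak{o}$ is then determined.

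The identification then proceeds case-by-case. For $f = \mathbf{t}(\mathbf{t}+1)$, CRT gives $G \cong S_3 \times S_3$ with $H$ the diagonal; the function $(g_1, g_2) \mapsto \sgn(g_1)\sgn(g_2)$ is well-defined on $G/H$, and a direct check shows that all three neighbor-matrices reduce to elements of value $-1$, giving a bipartition, so $\mathfrak{o}\cong K_{3,3}$ (the unique cubic bipartite graph on $6$ vertices). For $f$ a perfect square with residue $\mathbb{F}_2[\varepsilon]$, the split extension $G \cong \Gamma(\varepsilon) \rtimes H$ with $\Gamma(\varepsilon)$ the $3$-dimensional $\mathbb{F}_2$-space of trace-zero matrices in $\mathrm{M}_2(\mathbb{F}_2)$ identifies $V(\mathfrak{o}) \cong \mathbb{F}_2^3$, and the three neighbor-matrices reduce to $E_{12}$, $E_{21}$, and $I+E_{12}+E_{21}$, which form an $\mathbb{F}_2$-basis; hence $\mathfrak{o}$ is the Cayley graph of $\mathbb{F}_2^3$ along a basis, i.e., the $3$-cube. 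For $f = \mathbf{t}^2+\mathbf{t}+1$, $G \cong \mathrm{SL}_2(\mathbb{F}_4) \cong A_5$, and $H$ is the normalizer of a Sylow $3$-subgroup (the unique conjugacy class of order-$6$ subgroups of $A_5$), so the $A_5$-action on $G/H$ is the natural action on $\binom{[5]}{2}$; among $A_5$-invariant graphs on this set, the only $3$-regular one is the Kneser graph of disjoint pairs, which is the Petersen graph.

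The main obstacle is the lattice-level analysis required to produce the three determinant-$1$ polynomial matrices realizing the ``other'' endpoints of the edges at the base vertex; in particular, one must verify that the apparent third type-$2$ sub-ball of $B_0^{[1]}$ is actually a type-$0$ vertex (equivalent under $\mathrm{SL}_2(\mathbb{F}_2[\mathbf{t}])$ to $B_0^{[0]}$), so that each type-$1$ vertex does have valency $2$ in $\mathfrak{a}$. Once these identifications are in hand, each case reduces to a clean structural statement about $S_3\times S_3$, $\mathrm{SL}_2(\mathbb{F}_2[\varepsilon])$, and $A_5$ respectively.
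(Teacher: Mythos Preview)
Your approach is correct and genuinely different from the paper's. Both arguments begin with the vertex count $|V(\mathfrak{o})|=|G|/6$ via the stabilizer $H=\mathrm{SL}_2(\mathbb{F}_2)$, but diverge in how the edge structure is determined.

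The paper works dynamically: it fixes the single hyperbolic element $a=\sbmattrix{\mathbf{t}}{1}{1}{0}$, uses an eigenvalue/axis lemma to show that its translation axis descends to a cycle of length $2r$ in $\Gamma(f)\backslash\mathfrak{t}$ (where $r$ is the order of $\bar a$ in $G$), and then completes this cycle to the full graph by ad-hoc symmetry arguments (an orientation-reversing element for $K_{3,3}$; vertex-count plus rotational symmetry for the cube; ruling out the pentagonal prism via the nonexistence of a faithful $A_5$-action for Petersen). Your approach instead realizes $\mathfrak{o}$ directly as a Schreier graph of $G/H$ with respect to three explicit coset representatives, and then invokes a clean structural characterization in each case: the sign map on $S_3\times S_3$, the Cayley graph of $\mathbb{F}_2^3$ on a basis, and the unique $3$-regular $A_5$-invariant graph on $\binom{[5]}{2}$. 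The Petersen case in particular is more conceptual your way; the paper's cycle method has the advantage of uniformity (same $a$ throughout).

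Two small points. First, the ``main obstacle'' you flag --- that each type-$1$ vertex has valency $2$ in $\mathfrak{a}$ --- is exactly what the paper establishes in the paragraph preceding the theorem via the $M_k$-counting relations, so you may simply cite that. Second, your case arguments implicitly use that $\mathfrak{o}$ is simple (no loops or multi-edges at the base vertex); this amounts to checking that the three cosets $m_iH$ are distinct and nontrivial in $G$, which is a routine verification you should record explicitly. (For the cube case, a direct check gives $(m_3)_\Gamma = E_{11}+E_{12}+E_{21}+E_{22}$, which equals your $I+E_{12}+E_{21}$ since $I=E_{11}+E_{22}$; the three $\Gamma$-parts are then visibly a basis of the trace-zero space.)
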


The graph  $\Gamma(f)\backslash\mathfrak{t}$ , in each case covered by Theorem \ref{t12} above, can be seen
in Figure \ref{ft12}. They can be recovered by attaching a 
cusp at every
virtual vertex in the baricentric subdivision of $\mathfrak{o}$. 
Before we proceed to the proof of Theorem \ref{t12}, we need a
couple of lemmas.

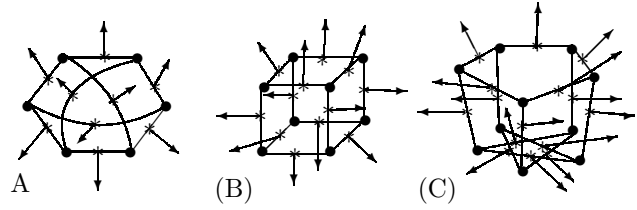
\begin{figure}
\[
\unitlength 1mm 
\linethickness{0.4pt}
\ifx\plotpoint\undefined\newsavebox{\plotpoint}\fi 
\begin{picture}(87.75,26.75)(0,0)
\put(42.5,11){\framebox(9.25,8.5)[cc]{}}
\put(38.25,6.75){\framebox(8.75,9)[cc]{}}
\multiput(38.5,15.5)(.033482143,.035714286){112}{\line(0,1){.035714286}}
\multiput(47,15.5)(.035714286,.033613445){119}{\line(1,0){.035714286}}
\put(42.5,11){\line(-1,-1){4.5}}
\multiput(51.75,11)(-.035447761,-.03358209){134}{\line(-1,0){.035447761}}
\put(37.75,21.5){\vector(-2,3){.07}}\multiput(40.5,17.5)(-.03353659,.04878049){82}{\line(0,1){.04878049}}
\put(46.75,24.5){\vector(0,1){.07}}\multiput(46.5,19.25)(.03125,.65625){8}{\line(0,1){.65625}}
\put(44,21.75){\vector(0,1){.07}}\multiput(43.75,15.75)(.03125,.75){8}{\line(0,1){.75}}
\put(52,23.75){\vector(1,3){.07}}\multiput(49.75,17.75)(.03358209,.08955224){67}{\line(0,1){.08955224}}
\put(57.5,15){\vector(1,0){.07}}\multiput(51.75,15.25)(.71875,-.03125){8}{\line(1,0){.71875}}
\put(52,12.75){\vector(1,0){.07}}\multiput(47,12.5)(.625,.03125){8}{\line(1,0){.625}}
\put(38,11.75){\vector(-1,0){5.75}}
\put(34,7){\vector(-3,-1){.07}}\multiput(40.75,9)(-.1125,-.03333333){60}{\line(-1,0){.1125}}
\put(42.5,6.75){\vector(0,-1){4.25}}
\put(53.5,5.5){\vector(1,-1){.07}}\multiput(49.75,9)(.036057692,-.033653846){104}{\line(1,0){.036057692}}
\put(45.75,11){\vector(0,-1){6.5}}
\put(42.5,14.5){\vector(-1,0){4}}
\put(69.75,21){\line(1,0){9.75}}
\put(79.5,21){\line(3,-4){3}}
\multiput(82.5,17)(-.091346154,-.033653846){104}{\line(-1,0){.091346154}}
\multiput(73,13.5)(-.065298507,.03358209){134}{\line(-1,0){.065298507}}
\multiput(64.25,18)(.06179775,.03370787){89}{\line(1,0){.06179775}}
\multiput(70,10.25)(.03353659,-.07317073){82}{\line(0,-1){.07317073}}
\multiput(72.75,4.25)(.046474359,.033653846){156}{\line(1,0){.046474359}}
\multiput(80,9.5)(-.19776119,-.03358209){67}{\line(-1,0){.19776119}}
\multiput(66.75,7.25)(.26923077,-.03365385){52}{\line(1,0){.26923077}}
\multiput(80.75,5.5)(-.078358209,.03358209){134}{\line(-1,0){.078358209}}
\multiput(66.75,7)(-.03333333,.15){75}{\line(0,1){.15}}
\multiput(72.75,4)(.03125,1.15625){8}{\line(0,1){1.15625}}
\multiput(80.75,5.25)(.03365385,.22596154){52}{\line(0,1){.22596154}}
\put(79.5,9.5){\line(0,1){11.75}}
\multiput(69.75,10.25)(-.03125,1.34375){8}{\line(0,1){1.34375}}
\put(67.25,19.25){\vector(-1,2){2.5}}
\put(75,26.75){\vector(0,1){.07}}\multiput(74.75,20.75)(.03125,.75){8}{\line(0,1){.75}}
\put(81,19.25){\vector(1,1){4}}
\put(86,20.25){\vector(3,2){.07}}\multiput(77.75,15)(.052884615,.033653846){156}{\line(1,0){.052884615}}
\put(61,16.25){\vector(-1,0){.07}}\multiput(68.5,15.5)(-.326087,.0326087){23}{\line(-1,0){.326087}}
\put(69.5,14){\vector(-1,0){6}}
\put(65.25,12.25){\vector(-1,0){6.25}}
\put(78.25,11){\vector(1,0){.07}}\multiput(73,10.5)(.35,.0333333){15}{\line(1,0){.35}}
\put(87.75,11.75){\vector(1,0){.07}}\multiput(82,12.25)(.3833333,-.0333333){15}{\line(1,0){.3833333}}
\put(79.5,14){\vector(1,0){6.5}}
\put(65.5,4){\vector(-2,-1){.07}}\multiput(71.25,7.25)(-.059278351,-.033505155){97}{\line(-1,0){.059278351}}
\put(85.5,9){\vector(4,1){.07}}\multiput(75.75,7.75)(.25657895,.03289474){38}{\line(1,0){.25657895}}
\put(79.75,3){\vector(1,-1){.07}}\multiput(76,6.5)(.036057692,-.033653846){104}{\line(1,0){.036057692}}
\put(78.75,1.75){\vector(1,-1){.07}}\multiput(74,6.25)(.035447761,-.03358209){134}{\line(1,0){.035447761}}
\put(70.75,12.75){\vector(-1,4){.07}}\multiput(72,8)(-.03289474,.125){38}{\line(0,1){.125}}
\put(12.25,19.5){\line(1,0){9.25}}
\multiput(21.5,19.5)(.033613445,-.054621849){119}{\line(0,-1){.054621849}}
\put(25.5,13){\line(-3,-4){4.5}}
\put(21,7){\line(-1,0){9}}
\multiput(12,7)(-.033687943,.046099291){141}{\line(0,1){.046099291}}
\multiput(7.25,13.5)(.033687943,.042553191){141}{\line(0,1){.042553191}}
\put(16.5,6.75){\vector(0,-1){5}}
\put(7.25,20.75){\vector(-2,3){.07}}\multiput(10,16.5)(-.03353659,.05182927){82}{\line(0,1){.05182927}}
\put(6,6){\vector(-3,-4){.07}}\multiput(9.5,10.25)(-.033653846,-.040865385){104}{\line(0,-1){.040865385}}
\put(27.25,6.75){\vector(4,-3){.07}}\multiput(23.5,9.75)(.04213483,-.03370787){89}{\line(1,0){.04213483}}
\put(26,19.25){\vector(1,2){.07}}\multiput(24,15.75)(.03333333,.05833333){60}{\line(0,1){.05833333}}
\put(17.25,19.5){\vector(0,1){4.75}}
\put(16.55,17.75){*}
\put(9.03,14.7){*}
\put(23.2,13.7){*}
\put(15.65,5){*}
\put(8.9,8.3){*}
\put(22.5,8.1){*}
\put(11.45,6.05){$\bullet$}
\put(6.3,12.15){$\bullet$}
\put(24.65,12.15){$\bullet$}
\put(19.9,6.05){$\bullet$}
\put(10.95,18.55){$\bullet$}
\put(20.85,18.55){$\bullet$}
\put(4.85,1.55){A}
\put(37.75,14.65){$\bullet$}
\put(37.5,6.1){$\bullet$}
\put(46.45,14.7){$\bullet$}
\put(46.5,6.1){$\bullet$}
\put(50.85,18.5){$\bullet$}
\put(51.15,10.25){$\bullet$}
\put(41.6,18.6){$\bullet$}
\put(41.75,10.2){$\bullet$}
\put(69.05,19.8){$\bullet$}
\put(78.65,19.75){$\bullet$}
\put(72.1,12.65){$\bullet$}
\put(81.6,15.95){$\bullet$}
\put(63.65,17){$\bullet$}
\put(72.1,3.5){$\bullet$}
\put(79.7,4.93){$\bullet$}
\put(66.1,6.25){$\bullet$}
\put(78.75,8.85){$\bullet$}
\put(69.15,9.2){$\bullet$}
\put(39.7,15.9){*}
\put(41.8,4.75){*}
\put(49.01,7.15){*}
\put(39.85,7.3){*}
\put(45.05,9.05){*}
\put(51,13.35){*}
\put(45.45,17.35){*}
\put(49,15.85){*}
\put(46.25,10.65){*}
\put(41.6,12.5){*}
\put(43,13.9){*}
\put(37.2,9.9){*}
\put(74,19.1){*}
\put(66.35,17.6){*}
\put(80.05,17.4){*}
\put(76.85,13.25){*}
\put(68,13.5){*}
\put(81.05,10.45){*}
\put(72,8.55){*}
\put(68.75,12.25){*}
\put(78.75,12.25){*}
\put(64.75,10.35){*}
\put(71.2,6.3){*}
\put(70.25,5.25){*}
\put(73.15,4.5){*}
\put(75.35,4.75){*}
\put(74.75,5.75){*}
\qbezier(7,13.5)(16.5,8)(25,12.75)
\qbezier(21,19)(9.375,16)(12.25,7.25)
\qbezier(11.75,20)(20.125,14.75)(21,6.5)
\put(21.5,15.75){\vector(3,2){.07}}\multiput(18,13.5)(.05223881,.03358209){67}{\line(1,0){.05223881}}
\put(11,16.75){\vector(-1,1){.07}}\multiput(13,14.75)(-.03333333,.03333333){60}{\line(0,1){.03333333}}
\put(13.75,8.15){\vector(-1,-1){.07}}\multiput(16.05,10.55)(-.03333333,-.03333333){60}{\line(0,1){.03333333}}
\put(12.3,12.55){*}
\put(17.65,11.95){*}
\put(15.25,8.75){*}
\put(32,1){(B)}
\put(59,1){(C)}
\end{picture}
\]
\caption{The graphs of Theorem \ref{t12} for $N=t(t+1)$ (A), $N$ a perfect square (B), or $N=t^2+t+1$ (C).
 We highlight the type-0 ($\bullet$) and type-1 (*) vertices. 
Arrows denote cusps.}\label{ft12}
\end{figure}

\begin{lemma}\label{l41}
Let $\bar a$ be an element in the quotient    $\Gamma(1)/\Gamma(f)$ that
has a fixed vertex $\bar v$ in the quotient
graph  $\Gamma(f)\backslash\mathfrak{g}$.
Then, for any lifting $v\in\mathfrak{t}$ of $\bar{v}$,
there is a lifting $a\in\Gamma(1)$
of $\bar a$ that fixes $v$.
\end{lemma}

\begin{proof}
If $\gamma.\bar{v}=\bar{v}$, then
$\gamma.v=\nu.v$ for some $\nu\in\Gamma(f)$.
It follows that $\nu^{-1}\gamma.v=v$, and
$\nu^{-1}\gamma$ is a lifting of
$\bar{\gamma}$.
\end{proof}

Recall that the group $\Delta=GL_2(\mathbb{F})$ is the stabilizer of $B_0^{[0]}$ in
$\Gamma(1)$.  We use $\bar{\Delta}$ for its image in $\Gamma(1)/\Gamma(f)$
in all that follows.

\begin{lemma}\label{ll51}
Let $a\in\Gamma(1)$ be an element
satisfying the following conditions:
\begin{itemize}
    \item The eigenvalues of $a$ are in
    $K_P$, and their valuations 
    differ by $2$.
    \item The image $\bar{a}$ of $a$ in
    $\Gamma(1)/\Gamma(f)$ has order
    $r$.
    \item No non-trivial power of $\bar{a}$ is
    conjugate to an element of $\bar\Delta$.
\end{itemize}
Then, the corresponding Moebius transformation
$\tau_a$ has precisely two fixed points
in $K_P$, and the action, on $\Gamma(f)\backslash\mathfrak{t}$,
of the matrix $a$ has no type-$0$ invariant vertex.
Furthermore, the image, in $\Gamma(f)\backslash\mathfrak{t}$, 
of the maximal geodesic joining the fixed
points of $\tau_a$ is a cycle
of length $2r$ containing no type-$n$ vertex for $n\geq2$.
\end{lemma}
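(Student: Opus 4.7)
The plan is to unpack the eigenvalue hypothesis to pin down the geometric action of $a$ on $\mathfrak{t}$, then handle the three assertions in turn. I will first observe that $\det(a)\in\mathbb{F}^*$ forces $v_P(\lambda_1)+v_P(\lambda_2)=0$, which combined with the ``difference $2$'' hypothesis gives $\{v_P(\lambda_1),v_P(\lambda_2)\}=\{+1,-1\}$. From this, $a$ has two distinct fixed points on $\mathbb{P}^1(K_P)$ and acts as a hyperbolic isometry of $\mathfrak{t}$ with translation length $2$ along the maximal geodesic $L$ joining them. Since $a\in\Gamma(1)$ preserves types, the type pattern along $L$ is $2$-periodic; I will then verify, by conjugating over $K_P$ so the fixed points become $0$ and $\infty$ and reading off the valuations, that this pattern is $(\ldots,0,1,0,1,\ldots)$, so $L$ contains no vertex of type $\geq 2$.

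For the ``no type-$0$ invariant vertex'' claim, I would argue by contradiction. Suppose $\bar a$ fixes a type-$0$ vertex $\bar v$. I can choose a type-$0$ lift $v\in\mathfrak{t}$ of $\bar v$ (since $\Gamma(f)\subseteq\Gamma(1)$ preserves types) and apply Lemma~\ref{l41} to produce a lift $\tilde a\in\Gamma(1)$ of $\bar a$ with $\tilde a\cdot v=v$. Because the stabilizer of any type-$0$ vertex in $\Gamma(1)$ is $\Gamma(1)$-conjugate to $\Delta=GL_2(\mathbb{F})$, this yields a conjugate of $\bar a$ in $\Gamma(1)/\Gamma(f)$ lying inside $\bar\Delta$, contradicting hypothesis~(3) for the non-trivial power $\bar a^1$.

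For the cycle assertion I will label the vertices of $L$ as $(x_i)_{i\in\mathbb{Z}}$ with $a\cdot x_i=x_{i+2}$, so that even-indexed vertices are type-$0$ and odd-indexed are type-$1$. Since $\bar a^r=1$ gives $a^r\in\Gamma(f)$, the image $\pi(L)$ is already a quotient of the length-$2r$ cycle $L/\langle a^r\rangle$, so it suffices to rule out further identifications. If $g\in\Gamma(f)$ gives $g\cdot x_{2i}=x_{2j}$, then $h:=a^{-(j-i)}g$ fixes $x_{2i}$ and reduces to $\bar a^{-(j-i)}$, which by the same conjugacy-to-$\bar\Delta$ argument forces $j-i\equiv 0\pmod r$. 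For a type-$1$ identification $g\cdot x_{2i+1}=x_{2j+1}$, the fact that $q=2$ in the setting of Theorem~\ref{t12} ensures that the only type-$0$ neighbors of $x_{2j+1}$ are $x_{2j}$ and $x_{2j+2}$, so $g$ must map $\{x_{2i},x_{2i+2}\}$ bijectively onto $\{x_{2j},x_{2j+2}\}$. The order-preserving subcase reduces directly to the type-$0$ case; the ``flip'' subcase gives simultaneously $j+1\equiv i$ and $j\equiv i+1\pmod r$, hence $r\mid 2$, and the remaining small-$r$ degeneracy I will handle by observing that such a flipping $g$, after composition with a suitable power of $a$, would produce a non-trivial power of $\bar a$ fixing a type-$0$ vertex on $L$, recovering the contradiction already obtained.

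The main obstacle I expect is exactly this type-$1$ flip analysis: the type-$0$ identifications collapse to a single application of hypothesis~(3), whereas the type-$1$ case requires both the geometric input that $q=2$ leaves no off-axis type-$0$ neighbors of the relevant vertices and a recursion that feeds back into the second conclusion of the lemma.
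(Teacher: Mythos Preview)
Your outline matches the paper's: translation length~$2$ from the eigenvalue gap, then Lemma~\ref{l41} together with hypothesis~(3) to rule out type-$0$ fixed vertices and to force the cycle length to be exactly $2r$. Two steps in your sketch need adjustment.

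The argument ``conjugate over $K_P$ so the fixed points become $0$ and $\infty$ and read off the valuations'' does not give the type pattern: the type function is defined through the $\Gamma(1)$-action and is not preserved under $GL_2(K_P)$-conjugation, so after moving the axis to the standard apartment you no longer know which vertices are type~$0$. What you \emph{do} have is that $a\in\Gamma(1)$ preserves types and translates $L$ by~$2$, so the type sequence along $L$ is $2$-periodic; since adjacent types differ by~$1$ and any vertex of type $m\geq 1$ has a \emph{unique} neighbor of type $m{+}1$, the lower value in the period must be~$0$, giving $(\ldots,0,1,0,1,\ldots)$. The paper avoids even this: it simply notes that vertices of type $\geq 2$ lie in cusps of $\Gamma(f)\backslash\mathfrak t$, and cusps contain no cycles, so the image of $L$ can only meet types~$0$ and~$1$.

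Your treatment of the cycle length is in fact more explicit than the paper's terse ``closer vertices cannot coincide or some non-trivial $\bar a^{s}$ would stabilize a type-$0$ vertex.'' Your type-$0$ case is exactly that argument; your type-$1$ case correctly uses $q=2$ to keep the neighbors on~$L$ and reduces the flip subcase to $r\mid 2$. However, the proposed resolution of that degeneracy (``produce a non-trivial power of $\bar a$ fixing a type-$0$ vertex'') fails: in the $r=2$ flip the resulting power is trivial. What actually finishes it is that for $q=2$ and $\deg f\geq 1$ one has $\Gamma(f)\cap\Delta=\{I\}$, and since $\Gamma(f)\trianglelefteq\Gamma(1)$ the $\Gamma(f)$-stabilizer of every type-$0$ vertex is trivial; hence the flipping $g$, which fixes a type-$0$ vertex on $L$, must be the identity. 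In the applications $r\in\{4,5,6\}$, so this case never arises.
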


\begin{proof}
The fact that $a$ has two different eigenvalues, and they belong 
to $K_P$, implies that the elements in the commutative algebra
$K_P[a]$ that are integral over $\mathcal{O}_P$ are contained,
precisely, in the maximal orders corresponding
to the vertices in a maximal geodesic 
$\mathfrak{p}$
(c.f. \cite[Prop 4.2]{Eichler2}).
If $\lambda_1$ and $\lambda_2$ are the eigenvalues  of $a$,
then $\tau_a$ is conjugate to the transformation
$\eta(z)=(\lambda_1/\lambda_2)z$, and the condition on their valuations
imply that $\tau_a$ acts as a shift by two on its unique
invariant maximal geodesic, which can be no other than $\mathfrak{p}$.
In particular, the visual limits of the geodesic
must be the only fixed points
of $\tau_a$. Since the order of $\bar{a}$ is $r$, any pair of
vertices at distance $2r$ on $\mathfrak{p}$
coincide on the quotient, but closer vertices
cannot coincide or some non-trivial power $\bar{a}^s$
would stabilize a type-0 vertex, and so it should be 
conjugate to an element of $\bar\Delta$
 by Lemma \ref{l41}. Finally, the fact that this cycle contains
 only type-0 and type-$1$ vertices
follows from the fact that type-$n$ vertices are located in
cusps for $n\geq2$, and therefore cannot be part of a cycle.
\end{proof}

\paragraph{Proof of Theorem \ref{t12}}
The strategy consists in finding, case by case, an
element $\bar{a}$  in the quotient $\Gamma(1)/\Gamma(f)$ that satisfies the hypotheses of Lemma \ref{ll51}.
This provides us with a suitable cycle that
can be easily completed to the whole graph
by ad-hoc arguments. In fact, in each case we have the same lifting $a=\sbmattrix {\mathbf{t}}110$.
Since the equation $x^2+\mathbf{t}x+1=0$ can be reduced to 
$y^2+y+\mathbf{t}^{-2}=0$ by the substitution $x=\mathbf{t}y$, the matrix $a$ has two different 
eigenvalues $\lambda_1$ and $\lambda_2$ whose valuations differ by $2$, as follows from Hensel's Lemma.
We conclude that multiplication by $a$ is a shift by two on the maximal geodesic whose visual limits are the roots of the equation for $x$ above. 

When $f=\mathbf{t}(\mathbf{t}+1)$, then $\Gamma(1)/\Gamma(f)$  is isomorphic to
the group $\mathrm{SL}_2(\mathbb{F}_2)\times
\mathrm{SL}_2(\mathbb{F}_2)$ by a Strong Approximation argument.
Note that the isomorphism is given by
$\bar{h}\mapsto\big(h(0),h(1)\big)$, for a polynomial
matrix $h\in\Gamma(1)$. In particular, $\Gamma(1)/\Gamma(f)$ 
has order 36, while the stabilizer of one type-$0$ 
element, which is 
$\bar\Delta\cong\mathrm{SL}_2(\mathbb{F}_2)\cong S_3$, 
has order $6$,
so there are $6$ type-$0$ vertices. We note that the element 
$$ \bar{a}=(a_1,a_2),\quad a_1=\left(\begin{array}{cc}0&1\\1&0\end{array}\right), \quad
a_2=\left(\begin{array}{cc}1&1\\1&0\end{array}\right),$$ has order $6$ and can be lifted to 
the matrix $a$ defined above. 
In this case, the group  $\bar\Delta$
embeds diagonally into
$\mathrm{SL}_2(\mathbb{F}_2)\times
\mathrm{SL}_2(\mathbb{F}_2)$.
Since $\mathrm{SL}_2(\mathbb{F}_2)$ has no element of order $6$,
$\bar{a}$ is not conjugate to an element of $\bar\Delta$.
For powers of $\bar{a}$ of degree $2$ or $3$, you can directly
see that they are not conjugate to a diagonal element since
they have a trivial coordinate.
It follows that $\bar{a}$ satisfies all the hypotheses 
of the preceding lemma.
 This gives us an hexagon
that contains all type-0 vertices,
and we just need a symmetry of the quotient graph 
that reverses this hexagon, 
since this proves that the remaining edge, for each
type-$0$ vertex in $\mathfrak{o}$, can only connect it 
to the opposite vertex of the hexagon
as in Fig. \ref{ft12}(A). Note that the element $b=\sbmattrix 0110$
 satisfies $bab^{-1}=a^{-1}$,
and therefore it must reverse the orientation of the hexagon, as required.

When $f=\mathbf{t}^2$, 
 the elements of $\Gamma(1)/\Gamma(f)$ can be written in the form $\bar{a}=\alpha+\bar{\mathbf{t}}\beta$, 
where $\alpha$ and $\beta$ are constant matrices, $\alpha$ is invertible and
$\alpha^{-1}\beta$ has zero trace.
In particular $\Gamma(1)/\Gamma(f)$ has order $48$.
Again the stabilizer of a type $0$-vertex has order
$6$, so we have now $8$ type-$0$ vertices.
If we take $\alpha=\sbmattrix0110$
and $\beta=\sbmattrix1000$, then the element $\bar{a}$, which
lifts to $a$ as before,  has order $4$. 
In fact, $\alpha^2=1_{\mathbb{M}}$ is the identity matrix, while
$\beta^2=\beta$ and $\alpha\beta+\beta\alpha=\alpha$.
 We conclude that 
 $\bar{a}^2=1_{\mathbb{M}}+
 \bar{\mathbf{t}}\alpha$,
 $\bar{a}^3=\alpha+\bar{\mathbf{t}}(1+\beta)$
 and
 $\bar{a}^4=1_{\mathbb{M}}$.
 Note that the homomorphism taking each element
 $\bar{a}'=\alpha'+\bar{\mathbf{t}}\beta'$ to
 the constant matrix $\alpha'$ fails to preserve
 the order of any non-trivial power of 
 $\bar{a}$,
 so they are not conjugate to constant
 matrices. This
implies that $a$ satisfies all condition in
the preceding proposition, and therefore the
quotient graph has a cycle of length $8$. 
Furthermore,
each type-$0$ vertex in this cycle is at distance $2$ of one of the remaining $4$ type-$0$ vertices,
by symmetry and the connectedness of the $0$-graph $\mathfrak{o}$.
Each of these remaining type-$0$ vertices, say $v$, must be similarly connected i.e., at distance $2$, to two 
more type-$0$ vertices $v'$ and $v''$, in a way that the whole graph is invariant by an order-4
 rotation of the square. Furthermore, $v'$ and $v''$ must be different, since the action of 
$\Gamma(1)/\Gamma(f)$ is transitive on type-$0$ vertices.  This can happen only
if the remaining type $0$-vertices are arranged in a second square that completes the cube in Figure \ref{ft12}(B).
 The case $f=\mathbf{t}^2+1$ can be deduced
from here by applying an automorphism to the ring $\mathbb{F}[\mathbf{t}]$.

Finally,  when $f=\mathbf{t}^2+\mathbf{t}+1$,  then $\Gamma(1)/\Gamma(f)$  is isomorphic to
the group $\mathrm{SL}_2(\mathbb{F}_4)$, and the matrix 
$\bar{a}=\sbmattrix{\omega}110$, where $\omega=\bar{\mathbf{t}}$ is a cubic
root of unity, lifts to the same matrix $a$ as before, and its powers are
$$\bar{a}^2=\left(\begin{array}{cc}\omega&\omega\\ \omega&1\end{array}\right),\quad
\bar{a}^3=\left(\begin{array}{cc}1&\omega\\ \omega&\omega\end{array}\right),\quad
\bar{a}^4=\left(\begin{array}{cc}0&1\\1&\omega\end{array}\right)$$
and finally $\bar{a}^5=1_{\mathbb{M}}$. All non-trivial
powers have order $5$, so the condition on their conjugates
is immediate. As before, this implies the existence of a cycle 
of length $10$ containing $5$
of the type-$0$ vertices, each at distance $2$ from one of the remaining $5$ type-$0$ vertices.
Now the rotational symmetry implies that the graph has one of the forms shown in Figure \ref{f13}. To determine
the correct graph we observe that there is no non-trivial action of $\mathrm{SL}_2(\mathbb{F}_4)\cong A_5$
on the prism in Figure \ref{f13}(B).\qed
\vspace{2mm}

\begin{figure}
\[
\unitlength 1mm 
\linethickness{0.4pt}
\ifx\plotpoint\undefined\newsavebox{\plotpoint}\fi 
\begin{picture}(20,26.75)(65,0)
\put(69.75,21){\line(1,0){9.75}}
\put(79.5,21){\line(3,-4){3}}
\multiput(82.5,17)(-.091346154,-.033653846){104}{\line(-1,0){.091346154}}
\multiput(73,13.5)(-.065298507,.03358209){134}{\line(-1,0){.065298507}}
\multiput(64.25,18)(.06179775,.03370787){89}{\line(1,0){.06179775}}
\multiput(70,10.25)(.03353659,-.07317073){82}{\line(0,-1){.07317073}}
\multiput(72.75,4.25)(.046474359,.033653846){156}{\line(1,0){.046474359}}
\multiput(80,9.5)(-.19776119,-.03358209){67}{\line(-1,0){.19776119}}
\multiput(66.75,7.25)(.26923077,-.03365385){52}{\line(1,0){.26923077}}
\multiput(80.75,5.5)(-.078358209,.03358209){134}{\line(-1,0){.078358209}}
\multiput(66.75,7)(-.03333333,.15){75}{\line(0,1){.15}}
\multiput(72.75,4)(.03125,1.15625){8}{\line(0,1){1.15625}}
\multiput(80.75,5.25)(.03365385,.22596154){52}{\line(0,1){.22596154}}
\put(79.5,9.5){\line(0,1){11.75}}
\multiput(69.75,10.25)(-.03125,1.34375){8}{\line(0,1){1.34375}}
\put(67.25,19.25){\vector(-1,2){2.5}}
\put(75,26.75){\vector(0,1){.07}}\multiput(74.75,20.75)(.03125,.75){8}{\line(0,1){.75}}
\put(81,19.25){\vector(1,1){4}}
\put(86,20.25){\vector(3,2){.07}}\multiput(77.75,15)(.052884615,.033653846){156}{\line(1,0){.052884615}}
\put(61,16.25){\vector(-1,0){.07}}\multiput(68.5,15.5)(-.326087,.0326087){23}{\line(-1,0){.326087}}
\put(69.5,14){\vector(-1,0){6}}
\put(65.25,12.25){\vector(-1,0){6.25}}
\put(78.25,11){\vector(1,0){.07}}\multiput(73,10.5)(.35,.0333333){15}{\line(1,0){.35}}
\put(87.75,11.75){\vector(1,0){.07}}\multiput(82,12.25)(.3833333,-.0333333){15}{\line(1,0){.3833333}}
\put(79.5,14){\vector(1,0){6.5}}
\put(65.5,4){\vector(-2,-1){.07}}\multiput(71.25,7.25)(-.059278351,-.033505155){97}{\line(-1,0){.059278351}}
\put(85.5,9){\vector(4,1){.07}}\multiput(75.75,7.75)(.25657895,.03289474){38}{\line(1,0){.25657895}}
\put(79.75,3){\vector(1,-1){.07}}\multiput(76,6.5)(.036057692,-.033653846){104}{\line(1,0){.036057692}}
\put(78.75,1.75){\vector(1,-1){.07}}\multiput(74,6.25)(.035447761,-.03358209){134}{\line(1,0){.035447761}}
\put(70.75,12.75){\vector(-1,4){.07}}\multiput(72,8)(-.03289474,.125){38}{\line(0,1){.125}}
\put(69.05,19.8){$\bullet$}
\put(78.65,19.75){$\bullet$}
\put(72.1,12.65){$\bullet$}
\put(81.6,15.95){$\bullet$}
\put(63.65,17){$\bullet$}
\put(72.1,3.5){$\bullet$}
\put(79.7,4.93){$\bullet$}
\put(66.1,6.25){$\bullet$}
\put(78.75,8.85){$\bullet$}
\put(69.15,9.2){$\bullet$}
\put(74,19.1){*}
\put(66.35,17.6){*}
\put(80.05,17.4){*}
\put(76.85,13.25){*}
\put(68,13.5){*}
\put(81.05,10.45){*}
\put(72,8.55){*}
\put(68.75,12.25){*}
\put(78.75,12.25){*}
\put(64.75,10.35){*}
\put(71.2,6.3){*}
\put(70.25,5.25){*}
\put(73.15,4.5){*}
\put(75.35,4.75){*}
\put(75,6){*}
\put(60,1.95){A}
\end{picture}\qquad\qquad\qquad
\unitlength 1mm 
\linethickness{0.4pt}
\ifx\plotpoint\undefined\newsavebox{\plotpoint}\fi 
\begin{picture}(20,26.75)(65,0)
\put(69.75,21){\line(1,0){9.75}}
\put(79.5,21){\line(3,-4){3}}
\multiput(82.5,17)(-.091346154,-.033653846){104}{\line(-1,0){.091346154}}
\multiput(73,13.5)(-.065298507,.03358209){134}{\line(-1,0){.065298507}}
\multiput(64.25,18)(.06179775,.03370787){89}{\line(1,0){.06179775}}
\multiput(66.75,7)(-.03333333,.15){75}{\line(0,1){.15}}
\multiput(72.75,4)(.03125,1.15625){8}{\line(0,1){1.15625}}
\multiput(80.75,5.25)(.03365385,.22596154){52}{\line(0,1){.22596154}}
\put(79.5,9.5){\line(0,1){11.75}}
\multiput(69.75,10.25)(-.03125,1.34375){8}{\line(0,1){1.34375}}
\put(67.25,19.25){\vector(-1,2){2.5}}
\put(75,26.75){\vector(0,1){.07}}\multiput(74.75,20.75)(.03125,.75){8}{\line(0,1){.75}}
\put(81,19.25){\vector(1,1){4}}
\put(86,20.25){\vector(3,2){.07}}\multiput(77.75,15)(.052884615,.033653846){156}{\line(1,0){.052884615}}
\put(61,16.25){\vector(-1,0){.07}}\multiput(68.5,15.5)(-.326087,.0326087){23}{\line(-1,0){.326087}}
\put(69.5,14){\vector(-1,0){6}}
\put(65.25,12.25){\vector(-1,0){6.25}}
\put(87.75,11.75){\vector(1,0){.07}}\multiput(82,12.25)(.3833333,-.0333333){15}{\line(1,0){.3833333}}
\put(79.5,14){\vector(1,0){6.5}}
\put(69.05,19.8){$\bullet$}
\put(78.65,19.75){$\bullet$}
\put(72.1,12.65){$\bullet$}
\put(81.6,15.95){$\bullet$}
\put(63.65,17){$\bullet$}
\put(72.1,3.5){$\bullet$}
\put(79.7,4.93){$\bullet$}
\put(66.1,6.25){$\bullet$}
\put(78.75,8.85){$\bullet$}
\put(69.15,9.2){$\bullet$}
\put(74,19.1){*}
\put(66.35,17.6){*}
\put(80.05,17.4){*}
\put(76.85,13.25){*}
\put(68,13.5){*}
\put(81.05,10.45){*}
\put(72,6.85){*}
\put(68.75,12.25){*}
\put(78.75,12.25){*}
\put(64.75,10.35){*}
\put(68.35,3.45){*}
\put(67.35,6.75){*}
\put(75,2.6){*}
\put(79.15,5.5){*}
\put(60,1.95){B}
\multiput(66.25,7.25)(.059278351,-.033505155){97}{\line(1,0){.059278351}}
\multiput(69.5,10)(-.03651685,-.03370787){89}{\line(-1,0){.03651685}}
\multiput(70,10.25)(.6166667,-.0333333){15}{\line(1,0){.6166667}}
\multiput(79.25,9.75)(.03289474,-.11184211){38}{\line(0,-1){.11184211}}
\multiput(80.5,5.5)(-.21052632,-.03289474){38}{\line(-1,0){.21052632}}
\put(78,.75){\vector(1,-2){.07}}\multiput(76,4.25)(.03333333,-.05833333){60}{\line(0,-1){.05833333}}
\put(79.75,7.75){\vector(2,-1){6}}
\put(66.5,1.25){\vector(-2,-3){.07}}\multiput(69,5.5)(-.03333333,-.05666667){75}{\line(0,-1){.05666667}}
\put(61.75,9.35){\vector(-1,0){.07}}\multiput(68,8.85)(-.4166667,.0333333){15}{\line(-1,0){.4166667}}
\put(70.5,5.25){\vector(-2,-3){.07}}\multiput(73,9.5)(-.03333333,-.05666667){75}{\line(0,-1){.05666667}}
\put(75,10){\vector(0,-1){4}}\put(74.1,8.2){*}
\end{picture}
\]
\caption{Two symmetric ways to complete the pentagon $10$-cycle in the last case in 
the proof of Theorem \ref{t12}.}\label{f13}
\end{figure}
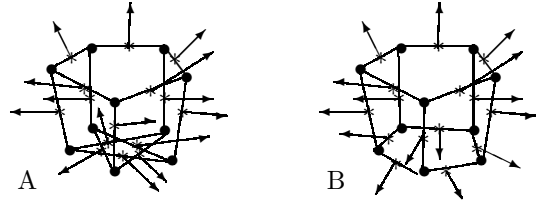

Next we use the previous result to compute quotient graphs for
the Hecke congruence subgroup of level $f$, namely
$$\Gamma_0(f)=\left\{\bmattrix abcd\in\Gamma(1)\Bigg| c\equiv 0\ (\mathrm{mod}\ f)\right\}.$$

\begin{example}
When $f=\mathbf{t}$, the graph $\Gamma_0(\mathbf{t})\backslash\mathfrak{t}(K)$ is obtained by letting the group
$$\Gamma_0(\mathbf{t})/\Gamma(\mathbf{t})\cong
\left\{\bmattrix ab0{a^{-1}}\Big|
a\in\mathbb{F}^*,b\in\mathbb{F}\right\}$$
 act on the graph in Fig. \ref{f10}(C). It is 
 not hard to see 
 that the quotient graph is isomorphic to the
 real line graph in this case. 
 Moreover, all cusps corresponding 
 to visual limits $a_i\in\mathbb{F}$, for the fundamental region 
 in Figure \ref{f10}(C), belong to the same 
 orbit.
\end{example}

\begin{lemma}\label{l51}
$\Gamma_0(\mathbf{t}^2)$ is a normal subgroup 
of index $2$ in $\Gamma_0(\mathbf{t})$.
\end{lemma}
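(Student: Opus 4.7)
The plan is to define an explicit surjective group homomorphism $\varphi\colon \Gamma_0(\mathbf{t}) \to \mathbb{F}$ with kernel $\Gamma_0(\mathbf{t}^2)$; in the context of this section, where $\mathbb{F}=\mathbb{F}_2$, this yields simultaneously the normality and the fact that the index equals $|\mathbb{F}|=2$. For $g = \bmattrix{a}{b}{c}{d} \in \Gamma_0(\mathbf{t})$, I would write $c = \mathbf{t}c'$ with $c'\in\mathbb{F}[\mathbf{t}]$ and set $\varphi(g) = c'(0)$, i.e.\ the coefficient of $\mathbf{t}$ in the lower-left entry. Then $\ker\varphi = \Gamma_0(\mathbf{t}^2)$ by construction, and surjectivity is witnessed by $\bmattrix{1}{0}{\mathbf{t}}{1}\in\Gamma_0(\mathbf{t})$, which maps to $1$.

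The essential step is to verify that $\varphi$ is a homomorphism. For $g_i = \bmattrix{a_i}{b_i}{c_i}{d_i}$ with $c_i = \mathbf{t}c_i'$, the lower-left entry of $g_1g_2$ equals $c_1a_2 + d_1c_2 = \mathbf{t}(c_1'a_2 + d_1c_2')$, so
$$\varphi(g_1g_2) \;=\; c_1'(0)\,a_2(0) + d_1(0)\,c_2'(0).$$
Now the condition $\det g_i = a_id_i - b_ic_i \in \mathbb{F}^*$ combined with $c_i(0)=0$ forces $a_i(0)d_i(0)\in\mathbb{F}^*$; since $\mathbb{F}=\mathbb{F}_2$ we have $\mathbb{F}^*=\{1\}$, hence $a_i(0) = d_i(0) = 1$. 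Substituting yields $\varphi(g_1g_2) = c_1'(0)+c_2'(0) = \varphi(g_1)+\varphi(g_2)$, and the first isomorphism theorem completes the argument.

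The main obstacle is precisely the step $a_i(0)=d_i(0)=1$, which uses the hypothesis $\mathbb{F}=\mathbb{F}_2$ in an essential way; without it, the formula for $\varphi(g_1g_2)$ mixes the diagonal constant terms and $\varphi$ is not a homomorphism. In fact, over a field of odd characteristic one can check with $g = \bmattrix{1}{0}{\mathbf{t}}{1}$ and $h = \bmattrix{1}{0}{0}{-1}\in\Gamma_0(\mathbf{t}^2)$ that the lower-left entry of $ghg^{-1}$ equals $2\mathbf{t}$, so $\Gamma_0(\mathbf{t}^2)$ would not even be normal in $\Gamma_0(\mathbf{t})$. Thus both the statement of the lemma and the proposed proof are genuine features of characteristic two.
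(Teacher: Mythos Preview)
Your proof is correct and takes a genuinely different route from the paper. The paper simply computes the index by counting: working modulo $\Gamma(\mathbf{t}^2)$ in $\mathrm{SL}_2(R)$ with $R=\mathbb{F}_2[u]$, $u^2=0$, it finds $|\Gamma_0(\mathbf{t})/\Gamma(\mathbf{t}^2)|=16$ and $|\Gamma_0(\mathbf{t}^2)/\Gamma(\mathbf{t}^2)|=8$, and then invokes the general fact that any subgroup of index $2$ is normal. Your argument instead produces the quotient map explicitly as $g\mapsto c'(0)$, which is more constructive and, as you observe, pinpoints exactly where the hypothesis $\mathbb{F}=\mathbb{F}_2$ enters (forcing $a_i(0)=d_i(0)=1$); your counterexample in odd characteristic is a nice bonus that the paper does not mention. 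The paper's approach, on the other hand, avoids checking the homomorphism identity altogether and would generalize more readily to other index computations where no convenient character is available.
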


\begin{proof}
It suffices to compute the index, since every 
subgroup of index $2$
is normal. Recall that 
$R=\mathcal{O}/\mathbf{t}^2\mathcal{O}
\cong\mathbb{F}_2[u]$,
where $u^2=0$. We can work in the quotient 
$\Gamma(1)/\Gamma(\mathbf{t}^2)\cong
\mathrm{SL}_2(R)$. Note that all 
matrices in $\Gamma(1)$ have determinant
one. The elements of   
$\Gamma_0(\mathbf{t})/\Gamma(\mathbf{t}^2)$ 
are the matrices of the form $\sbmattrix abcd$,
where
$a$ is invertible, $c\in\{0,u\}$ and 
$d=a^{-1}(1+bc)$. It follows that $|\Gamma_0(\mathbf{t})/\Gamma(\mathbf{t}^2)|=16$.
The computation of  $|\Gamma_0(\mathbf{t}^2)/\Gamma(\mathbf{t}^2)|$ is similar, but in this case $c=0$, so we get
$|\Gamma_0(\mathbf{t}^2)/\Gamma(\mathbf{t}^2)|=8$.
\end{proof}

\begin{proposition}\label{p52}
When $q=2$, and $f$ is a quadratic polynomial, 
the WRFQ for $\Gamma_0(f)$ looks 
like the one in the corresponding entry of Table \ref{table2}.
\end{proposition}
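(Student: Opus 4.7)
The strategy is to exploit the chain of normal inclusions
\[
\Gamma(f)\ \trianglelefteq\ \Gamma_0(f)\ \subseteq\ \Gamma(1),
\]
so that, setting $G=\Gamma_0(f)/\Gamma(f)$, the desired quotient factors as
\[
\Gamma_0(f)\backslash\backslash\mathfrak{t}\ \cong\ G\backslash\backslash\bigl(\Gamma(f)\backslash\backslash\mathfrak{t}\bigr),
\]
and the source has been completely described by Theorem \ref{t12}. The problem then splits into three parallel computations, one for each quadratic $f\in\mathbb{F}_2[\mathbf{t}]$, plus a computation of the weights on the final quotient.

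The first step is to identify $G$ as a subgroup of $\Gamma(1)/\Gamma(f)$. When $f=\mathbf{t}(\mathbf{t}+1)$, the Chinese Remainder Theorem yields $\Gamma(1)/\Gamma(f)\cong\mathrm{SL}_2(\mathbb{F}_2)\times\mathrm{SL}_2(\mathbb{F}_2)$, and $G$ is the product of the two upper-triangular Borel subgroups, of order $4$. When $f$ is a perfect square, $\mathbb{F}_2[\mathbf{t}]/f\cong\mathbb{F}_2[u]/(u^2)$ and a counting argument identical to that of Lemma \ref{l51} gives $|G|=8$. When $f=\mathbf{t}^2+\mathbf{t}+1$, we have $\Gamma(1)/\Gamma(f)\cong\mathrm{SL}_2(\mathbb{F}_4)$ and $G$ is a Borel subgroup of order $12$.

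The second step is to compute the $G$-action on each of the three graphs in Figure \ref{ft12}. By Lemma \ref{l41}, the stabilizer of the base type-$0$ vertex in $\Gamma(1)/\Gamma(f)$ is $\bar{\Delta}$, so the stabilizer in $G$ is $G\cap\bar{\Delta}$, which can be read off directly in each case; stabilizers at the other type-$0$ vertices are obtained by conjugation. Since $G$ is small, I would then let a few explicit generators (for instance the matrices used in the proof of Theorem \ref{t12}, together with the elementary unipotent $\sbmattrix 1f01$) act on the vertex sets of $\mathfrak{o}$ and of its barycentric subdivision, and tabulate the orbits of type-$0$ vertices, type-$1$ vertices, and edges. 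Once the set-theoretic quotient is known, the third step is to read off the weights: away from the image of $\mathfrak{o}$ the cusps have the forced shape of Figure \ref{sdoq}(B) by Proposition \ref{p41}, so only edges at or adjacent to the center matter. For each edge $(v,w)$ in the quotient, the weight $m_{v,w}$ is the number of the $q+1=3$ neighbors of any lift of $v$ whose $G$-orbit maps to $w$, which is a bookkeeping exercise once the orbit data are in place.

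The main obstacle I anticipate is the perfect-square case, where $G$ has order $8$ acting on the barycentric subdivision of the cube and several inversions must be tracked in order to distinguish half-edges, loops, and genuine multi-edges in the fine quotient according to the conventions of \S2. A secondary nuisance is that $\Gamma_0(f)$ lives in $\mathrm{GL}_2(K)$ rather than $\mathrm{PGL}_2(K)$; this requires quotienting by scalars before passing to the WRFQ, but in characteristic $2$ the central scalars in $\Gamma_0(f)$ reduce to the identity, so no information is lost. With these caveats handled, matching the resulting weighted graphs against the three entries of Table \ref{table2} completes the proof.
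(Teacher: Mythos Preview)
Your plan is essentially the paper's own strategy: both factor the quotient as $G\backslash\backslash(\Gamma(f)\backslash\backslash\mathfrak{t})$ with $G=\Gamma_0(f)/\Gamma(f)$, identify $G$ as the upper-triangular subgroup of $\mathrm{SL}_2(R)$ for $R=\mathbb{F}_2[\mathbf{t}]/(f)$, and then compute its action on the graph $\mathfrak{o}$ produced by Theorem~\ref{t12}. The group orders you list ($4$, $8$, $12$) agree with the paper's.

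Two small points. First, your proposed generator $\sbmattrix 1f01$ lies in $\Gamma(f)$ and hence is trivial in $G$; you want elements like $\sbmattrix 1101$, $\sbmattrix 1{\mathbf{t}}01$, and the diagonal matrices in $R^*$. Second, Table~\ref{table2} records not just the weighted shape but the explicit cusp representatives $0,\infty,1/\mathbf{t},1/(\mathbf{t}+1)$, and your plan does not say how to pin these down. The paper handles this separately: it first argues that $0$ and $\infty$ are inequivalent under $\Gamma_0(f)$ (since any Moebius map sending $\infty$ to $0$ has lower-left entry not divisible by $f$) and locates them on the quotient, and then in the $t^2$ and $t(t+1)$ cases finds the remaining cusp labels by applying an explicit element of $\Gamma_0(\mathbf{t})\setminus\Gamma_0(\mathbf{t}^2)$, resp.\ tracing the type-$1$ lifts. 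Where the paper differs in execution is that, rather than tabulating orbits directly, it pins down $G$ inside $\mathrm{Aut}(\mathfrak{o})$ by structural arguments (e.g.\ for $f=\mathbf{t}^2$ it uses the non-normality established in Lemma~\ref{l51} to select the correct conjugacy class of $C_2^3$ inside the cube's symmetry group; for $f=\mathbf{t}(\mathbf{t}+1)$ it uses that $\sbmattrix 1{\mathbf{t}}01$ swaps two type-$0$ neighbours to rule out one of the two candidate $C_2\times C_2$'s in $\mathrm{Aut}(K_{3,3})$). Your direct orbit computation would reach the same end, just with more bookkeeping.
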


Note that, for the fundamental region with the cusps given 
in Table \ref{table2}, the lifting of the type-$0$ leaf, for 
$f=\mathbf{t}^2+\mathbf{t}+1$, can only be the ball $B_{1/\mathbf{t}}^{[2]}$.
 
\begin{table}
\[
\begin{tabular}{|l|c|}
  \hline
  $f$ & $\Gamma_0(f)|\mathfrak{t}_P$ \\
\hline
  $\mathbf{t}$ &
\xygraph{
!{<0cm,0cm>;<.8cm,0cm>:<0cm,.8cm>::}
!{(4,1)}*+{\bullet}="b"
 !{(4,.5)}*+{0}
!{(2,1)}*+{\star}="i1"!{(1.8,0.8)}*+{{}_{0}}
!{(6,1)}*+{\star}="i3" !{(6.3,1)}*+{{}_\infty}
!{(3.8,1.2)}*+{{}^q}!{(4.2,1.2)}*+{{}^1}
"b"-@{.>}"i1"  "b"-@{.>}"i3" } 
\\ \hline
  $\mathbf{t}(\mathbf{t}+1)$ & 
\xygraph{
!{<0cm,0cm>;<.8cm,0cm>:<0cm,.8cm>::}
!{(3,2)}*+{\bullet}="a" !{(3,2.5)}*+{1}
!{(4,2)}*+{\bullet}="b" !{(4,2.5)}*+{0}
!{(5,2)}*+{\bullet}="c" !{(5,2.5)}*+{1}
!{(4,1)}*+{\bullet}="d" !{(4,0.5)}*+{1}
!{(5,1)}*+{\bullet}="e" !{(5,0.5)}*+{0}
!{(6,1)}*+{\bullet}="f" !{(6,0.5)}*+{1}
!{(1,2)}*+{\star}="i1" !{(0.8,1.8)}*+{{}_{\frac1{\mathbf{t}+1}}}
!{(8,2)}*+{\star}="i2" !{(8.3,2)}*+{{}_{\frac1{\mathbf{t}}}}
!{(1,1)}*+{\star}="i3" !{(0.8,0.8)}*+{{}_{0}}
!{(8,1)}*+{\star}="i4" !{(8.3,1)}*+{{}_\infty}
!{(3.8,0.9)}*+{{}_1}!{(4.2,1.1)}*+{{}^1}!{(4.8,1.1)}*+{{}^2}
!{(5.2,1.1)}*+{{}^1}!{(5.8,1.1)}*+{{}^2}!{(6.2,1.1)}*+{{}^1}
!{(2.8,2.1)}*+{{}^1}!{(5.2,2.1)}*+{{}^1}
!{(3.2,2.1)}*+{{}^2}!{(3.8,2.1)}*+{{}^1}
!{(4.2,2.1)}*+{{}^1}!{(4.8,2.1)}*+{{}^2}
!{(3.9,1.7)}*+{{}^1}!{(3.9,1.3)}*+{{}_1}
 "a"-"b" "b"-"c" "b"-"d" "d"-"e" "e"-"f"
"a"-@{.>}"i1" "c"-@{.>}"i2" "d"-@{.>}"i3" "f"-@{.>}"i4"
} 
\\  \hline
  $\mathbf{t}^2$ &
\xygraph{
!{<0cm,0cm>;<.8cm,0cm>:<0cm,.8cm>::}
!{(2,1)}*+{\bullet}="a" !{(2,0.5)}*+{1}
!{(3,1)}*+{\bullet}="b" !{(3,0.5)}*+{0}
!{(4,1)}*+{\bullet}="c" !{(4,0.5)}*+{1}
!{(5,1)}*+{\bullet}="d" !{(5,0.5)}*+{0}
!{(6,1)}*+{\bullet}="e" !{(6,0.5)}*+{1}
!{(1,1)}*+{\star}="i1" !{(0.8,0.8)}*+{{}_{\frac1{\mathbf{t}}}}
!{(4,2)}*+{\star}="i2" !{(4.3,2)}*+{{}_0}
!{(8,1)}*+{\star}="i3" !{(8.3,1)}*+{{}_\infty}
!{(1.8,1.1)}*+{{}^1}!{(2.2,1.1)}*+{{}^2}
!{(2.8,1.1)}*+{{}^1}!{(3.2,1.1)}*+{{}^2}
!{(3.8,1.1)}*+{{}^1}!{(4.2,1.1)}*+{{}^1}!{(4.8,1.1)}*+{{}^2}
!{(5.2,1.1)}*+{{}^1}!{(5.8,1.1)}*+{{}^2}!{(6.2,1.1)}*+{{}^1}
 "a"-"b" "b"-"c" "c"-"d" "d"-"e" 
"a"-@{.>}"i1" "c"-@{.>}"i2" "e"-@{.>}"i3" } 
\\ \hline
$\mathbf{t}^2+\mathbf{t}+1$ & 
\xygraph{
!{<0cm,0cm>;<.8cm,0cm>:<0cm,.8cm>::}
!{(3,1)}*+{\bullet}="b" !{(3,0.5)}*+{0}
!{(4,1)}*+{\bullet}="c" !{(4,0.5)}*+{1}
!{(5,1)}*+{\bullet}="d" !{(5,0.5)}*+{0}
!{(6,1)}*+{\bullet}="e" !{(6,0.5)}*+{1}
!{(4,2)}*+{\star}="i2" !{(4.3,2)}*+{{}_0} 
!{(8,1)}*+{\star}="i3" !{(8.3,1)}*+{{}_\infty}
!{(5.8,1.1)}*+{{}^2}!{(6.2,1.1)}*+{{}^1}!{(3.2,1.1)}*+{{}^3}
!{(3.8,1.1)}*+{{}^1}!{(4.2,1.1)}*+{{}^1}!{(4.8,1.1)}*+{{}^2}
!{(5.2,1.1)}*+{{}^1}
"b"-"c" "c"-"d" "d"-"e" "c"-@{.>}"i2" "e"-@{.>}"i3" } 
\\  \hline
\end{tabular}
\]
\caption{Some quotient graphs with its cusp representatives.}\label{table2}
\end{table}

\begin{proof}
We consider the quotient group $H=\Gamma_0(f)/\Gamma(f)$ as a subgroup
of $\mathrm{SL}_2(R)$, for $R=\mathcal{O}/f\mathcal{O}$
as before. Their elements
are the matrices of the form $\sbmattrix ab0{a^{-1}}$, 
where $a\in R^*$ and $b\in R$.
In particular, this group is isomorphic to the semidirect product $R\ltimes R^*$, where $R^*$
acts on $R$ by $a*b=a^2b$.
 Our strategy consist in identifying $H$ as a subgroup of the automorphism group 
$\mathrm{Aut}(\mathfrak{o})$ of the graph 
$\mathfrak{o}$ to compute the corresponding quotient graph.

We note that each Moebius transformation sending $\infty$ to $0$ must have the form
$\mu(z)=\frac a{z+b}$, which corresponds to a matrix that do not belong to $\Gamma_0(f)$
for $\deg(f)>0$. So $\infty$ and $0$ are representatives of different visual limits. Furthermore,
the maximal geodesic joining them has a unique vertex of type $0$, namely $B_0^{[0]}$,
and all vertices in the ray joining it with $\infty$ have valency $2$, as all translations
$z\mapsto z+h$, where $h$ is a polynomial, belong to $\Gamma_0(f)$. This is sufficient
to place the visual limits corresponding to $0$ and $\infty$ in each case. 

When $f=\mathbf{t}^2+\mathbf{t}+1$, we have $R\cong\mathbb{F}_4=\mathbb{F}_2(\omega)$, where $\omega$ is a primitive
cubic root of unity, whence $R^*$ acts non-trivially on $R$ which is isomorphic to the Klein group. We
conclude that $H\cong A_4$. We note that $\mathrm{Aut}(\mathfrak{o})\cong S_5$, and the elements
of the subgroup $A_5$, the only subgroup of order $60$, 
can be identified with symmetries of the dodecahedron in a natural way.
We recall that any copy of $A_4$ inside the latter symetry group is the stabilizer of a triplet
of mutually orthogonal pairs of opposite edges. We denote their images by a $\star$ in Figure \ref{f15}(A)
and call them $\star$-edges. Their vertices are called $a$-vertices.
 The elements of order $2$ correspond to rotations around an axis 
through a $\star$-edge, while the elements of order $3$ are rotations $\rho_b$ around an axis 
crossing a non-$a$-vertex, a $b$-vertex (c.f. Fig. \ref{f15}(A)). Each rotation $\rho_b$
permute transitively the edges adjacent to a 
$b$-vertex, i.e., permutes the three 
neighboring $a$-vertices,
and also the three $b$-vertices beyond each of 
the former. We conclude that all $a$-vertices 
are in a unique orbit,
as are the $b$-vertices.
The $\star$-edges are in a unique orbit
since there is only one of them
adjacent to each $a$-vertex, and its image
(in the fine quotient of $\mathfrak{o}$)
is a half edge for that reason. 
The remaining edges are also in a unique orbit
since the rotation around a $b$-vertex
identify all adjacent edges. 
The result follows in this
case.

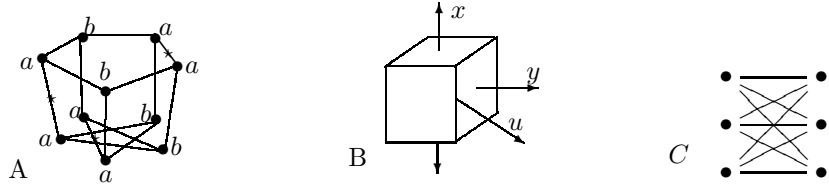
\begin{figure}
\[
\unitlength 1mm 
\linethickness{0.4pt}
\ifx\plotpoint\undefined\newsavebox{\plotpoint}\fi 
\begin{picture}(20,26.75)(65,0)
\put(69.75,21){\line(1,0){9.75}}
\put(79.5,21){\line(3,-4){3}}
\multiput(82.5,17)(-.091346154,-.033653846){104}{\line(-1,0){.091346154}}
\multiput(73,13.5)(-.065298507,.03358209){134}{\line(-1,0){.065298507}}
\multiput(64.25,18)(.06179775,.03370787){89}{\line(1,0){.06179775}}
\multiput(70,10.25)(.03353659,-.07317073){82}{\line(0,-1){.07317073}}
\multiput(72.75,4.25)(.046474359,.033653846){156}{\line(1,0){.046474359}}
\multiput(80,9.5)(-.19776119,-.03358209){67}{\line(-1,0){.19776119}}
\multiput(66.75,7.25)(.26923077,-.03365385){52}{\line(1,0){.26923077}}
\multiput(80.75,5.5)(-.078358209,.03358209){134}{\line(-1,0){.078358209}}
\multiput(66.75,7)(-.03333333,.15){75}{\line(0,1){.15}}
\multiput(72.75,4)(.03125,1.15625){8}{\line(0,1){1.15625}}
\multiput(80.75,5.25)(.03365385,.22596154){52}{\line(0,1){.22596154}}
\put(79.5,9.5){\line(0,1){11.75}}
\multiput(69.75,10.25)(-.03125,1.34375){8}{\line(0,1){1.34375}}
\put(69.05,19.8){$\bullet$}
\put(78.65,19.75){$\bullet$}
\put(72.1,12.65){$\bullet$}
\put(81.6,15.95){$\bullet$}
\put(63.65,17){$\bullet$}
\put(72.1,3.5){$\bullet$}
\put(79.7,4.93){$\bullet$}
\put(66.1,6.25){$\bullet$}
\put(78.75,8.85){$\bullet$}
\put(69.15,9.2){$\bullet$}
\put(70,21){$b$}
\put(80,21){$a$}
\put(72.1,15){$b$}
\put(83.5,16){$a$}
\put(61.5,16.5){$a$}
\put(72,1.5){$a$}
\put(81.5,5){$b$}
\put(64,6.25){$a$}
\put(77.5,9.5){$b$}
\put(68,10){$a$}
\put(80.5,18.5){${}_\star$}
\put(65,12.5){${}_\star$}
\put(70.8,7.2){${}_\star$}
\put(60,1.95){A}
\end{picture}
\qquad
\unitlength .8mm 
\linethickness{0.4pt}
\ifx\plotpoint\undefined\newsavebox{\plotpoint}\fi 
\begin{picture}(59.5,31.75)(0,0)
\put(22.75,8.5){\framebox(11.5,12.5)[cc]{}}
\multiput(22.5,21)(.048657718,.033557047){149}{\line(1,0){.048657718}}
\multiput(34,21)(.048657718,.033557047){149}{\line(1,0){.048657718}}
\put(29.75,26){\line(1,0){11}}
\put(41.2,26){\line(0,-1){12.5}}
\multiput(34.2,8.5)(.048657718,.033557047){149}{\line(1,0){.048657718}}
\put(31.5,23.75){\vector(0,1){8}}
\put(31.25,8.25){\vector(0,-1){5}}
\put(37.75,17.5){\vector(1,0){10.25}}
\put(45.75,8.25){\vector(3,-2){.07}}\multiput(34.25,15.75)(.051569507,-.033632287){223}{\line(1,0){.051569507}}
\put(34.75,30){\makebox(0,0)[cc]{$x$}}
\put(47.25,19.5){\makebox(0,0)[cc]{$y$}}
\put(44.25,11.5){\makebox(0,0)[cc]{$u$}}
\put(18,6){\makebox(0,0)[cc]{B}}
\end{picture}
\qquad
\xygraph{
!{(0,0.7)}*+{C}="ns"
!{(0.5,0.5)}*+{\bullet}="a1"
!{(0.5,1)}*+{\bullet}="a2"
!{(0.5,1.5)}*+{\bullet}="a3"
!{(1.5,0.5)}*+{\bullet}="b1"
!{(1.5,1)}*+{\bullet}="b2"
!{(1.5,1.5)}*+{\bullet}="b3"
"a1"-"b1" "a1"-"b2" "a1"-"b3"
"a2"-"b1" "a2"-"b2" "a2"-"b3"
"a3"-"b1" "a3"-"b2" "a3"-"b3"
}
\]
\caption{Three pictures used in the proof of Proposition \ref{p52}.}
\label{f15}\end{figure}

When $f=\mathbf{t}^2$, we have $R=\mathbb{F}_2[u]$
with $u^2=0$, as in the proof of Lemma \ref{l51}, and therefore 
$R^*= \{1,1+u\}\cong C_2$ acts trivially on $R$. On the other hand, 
$\Gamma(1)/\Gamma(\mathbf{t}^2)\cong\mathrm{SL}_2(R)$ has order $48$, by the
description given in the proof of Theorem \ref{t12}.
In particular, it is the full
symmetry group $G$ of the cube, which is isomorphic to 
$C_2\times S_4$. The $2$-sylow subgroup
of the latter group  is $C_2\times D_4$, which has two non-conjugate subgroups isomorphic
to $C_2\times C_2\times C_2$. In fact, if we set $D_4=\langle a,b|a^4=b^2=abab=e\rangle$ for
the last factor, and $C_2=\langle c|c^2=e\rangle$ for the first, 
then $c$ corresponds to the antipodal map, as it is central, 
the matrix $a$ to a rotation in
$90$ degrees on a axis passing by the center of a face, 
like the axis $x$ in Figure \ref{f15}(B),
and $b$ to a $180$-degrees rotation on
an orthogonal axis $y$. To fix ideas, we assume also
$y$ passes through the center of a face,
as in Fig. \ref{f15}(B). Then $ab$ is a $180$-degrees 
rotation on an axis passing
through an edge, like axis $u$ in Fig. \ref{f15}(B). Then 
$H_1=\langle c,a^2,b\rangle$ and $H_2=\langle c,a^2,ab\rangle$ 
are representatives of the two conjugacy classes of subgroups 
isomorphic to 
$C_2\times C_2\times C_2$. The group $H_1$ is normal in $G$,
as it is generated by all 180-degrees rotations around an axis 
crossing a face, plus the antipodal. However, $H_2$ is not so, 
since it contains a 180-degrees rotation around an axis crossing
an edge precisely when the axis is perpendicular to $x$.
Since $\Gamma_0(\mathbf{t}^2)/\Gamma(\mathbf{t}^2)$ is not normal 
in $\Gamma(1)/\Gamma(\mathbf{t}^2)$, it must correspond to
a group that is conjugate to $H_2$.
 Now the quotient graph is easy to compute.
 In fact, there are two orbits of vertices, the vertices 
 in one orbit are precisely
 those in a vertical plane $\Pi$ containing the axis $u$.
 Furthermore, $ca^3b$ is a reflection on $\Pi$, while
 $cab$ is a reflection on a vertical plane that is perpendicular
 to $\Pi$. The cusps at $0$ and $\infty$ are placed by the
 general argument given earlier.
To find a representative of the remaining cusp, we observe that 
$\Gamma_0(\mathbf{t})/\Gamma_0(\mathbf{t}^2)\cong C_2$ act on this graph, and a representative of
the non-trivial element is $\sbmattrix 10{\mathbf{t}}1$, 
whose associated Moebius transformation
is given by $\tau(z)=\frac z{\mathbf{t}z+1}$. We conclude that a representative of the cusp in
the left is $\tau(\infty)=\frac1{\mathbf{t}}$. 

When $f=\mathbf{t}(\mathbf{t}+1)$, we have $R\cong\mathbb{F}_2\times\mathbb{F}_2$, whence $R^*$ is trivial.
In this case $\mathrm{Aut}(\mathfrak{o})$ is a 
semidirect product of the form 
$C_2\ltimes(S_3\times S_3)$,
where $C_2$ acts by permuting the coordinates. 
Its $2$-sylow subgroup has the form
$C_2\ltimes(C_2\times C_2)\cong D_4$. As before, there are two
subgroups isomorphic to $C_2\times C_2$. One is contained in $S_3\times S_3$.
We claim that this cannot be the one corresponding to $\Gamma_0(f)$.
In fact, the element $\sbmattrix 1{\mathbf{t}}01\in\Gamma_0(f)$ stabilizes a type-$1$ vertex, 
while it switches the two neighboring type-$0$  vertices. This implies that the
image of $\Gamma_0(f)$ cannot stabilize the
natural partition of vertices in $K_{3,3}$, 
which correspond to the type-$0$ vertices.
We conclude that it is, up to conjugacy, the group generated by two automorphisms of $K_{3,3}$
that, when drawn as in Figure 7C, can be defined as the reflection 
on the vertical axis and the reflection on
the horizontal axis.  This gives the required quotient graph. Now we note that, in the geodesic whose visual limits are $0$
and $\infty$, the type-$1$ vertex next to the cusp corresponding to $0$ has the ball $B_0^{[1]}$ as
a representative, so the type-$1$ vertices on the upper line of the graph have representatives
$B_{1/\mathbf{t}}^{[3]}$ and 
$B_{1/(\mathbf{t}+1)}^{[3]}$. There is a unique ray from each of these vertices with
increasing types, and it can be seen, as in the previous case, that the corresponding visual limits are $1/\mathbf{t}$
and $1/(\mathbf{t}+1)$. 
\end{proof}

Now we evaluate the WRFQ $\mathfrak{wq}_P(\mathbb{O}_D)$ for 
the genus of Eichler orders $\mathbb{O}_D$, whose level $D=\mathrm{div}(f)$
 is the divisor of zeros of one of the four 
polynomials considered in this section. 
In order to do this, we note that 
$\Gamma_0(f)$ is precisely the ring of units of
an Eichler $A$-order $E(f)$ of level $f$. 
It suffices now to apply to each of these graphs the action of the 
quotient group $N(f)/\Gamma_0(f)$, where  $N(f)$ is the normalizer 
of $E(f)$.
Although $A$ is a polynomial ring in all of this section,
next lemma is written in more generality, as it is used in the
last example of \S8.

\begin{lemma}\label{lem83}
Assume that $P\in|X|$ is chosen in a way that the ring 
$A=\oink_X(U)$ is a principal ideal domain. Let $f\in A$,
let $E(f)$ be the Eichler $A$-order described above, $N(f)$ its normalizer
and $n\in N(f)$. 
 If conjugation by $n$ fixes every maximal order containing $E(f)$, then $n\in\Gamma_0(f)$.  
\end{lemma}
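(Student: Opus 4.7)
The plan is to reduce the statement to a short matrix computation involving only two of the hypothesized fixed maximal orders. Recall that $E(f)=\bmattrix{A}{A}{fA}{A}$, and put $g=\bmattrix{f^{-1}}{0}{0}{1}$. Then $\Da_1=\mathbb{M}_2(A)$ and $\Da_2=g\Da_1g^{-1}=\bmattrix{A}{f^{-1}A}{fA}{A}$ are two maximal $A$-orders whose intersection is exactly $E(f)$, so by hypothesis $n$ fixes both by conjugation.

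First I would use the PID hypothesis to lift $n$ to a matrix. A standard argument (every rank-$2$ $A$-lattice $L\subseteq K^2$ whose $A$-endomorphism ring equals $\mathbb{M}_2(A)$ must be of the form $\alpha A^{2}$ for some $\alpha\in K^{*}$, using the fact that $A$ is a PID) shows that the normalizer of $\Da_1$ in $\mathrm{GL}_2(K)$ is exactly $K^{*}\cdot\mathrm{GL}_2(A)$. The hypothesis $n\Da_1n^{-1}=\Da_1$ therefore lets me represent $n$, modulo $K^{*}$, by a matrix $m=\bmattrix{a}{b}{c}{d}\in\mathrm{GL}_2(A)$.

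Next I would impose the condition $m\Da_2m^{-1}=\Da_2$, which is equivalent to $g^{-1}mg\in K^{*}\cdot\mathrm{GL}_2(A)$. A direct multiplication gives
\[
g^{-1}mg=\bmattrix{a}{fb}{f^{-1}c}{d},
\]
so $g^{-1}mg=\gamma m'$ for some $\gamma\in K^{*}$ and $m'=\bmattrix{a'}{b'}{c'}{d'}\in\mathrm{GL}_2(A)$. Comparison of the $(2,1)$-entries yields $c\in f\gamma A$, while comparison of determinants gives $\gamma^{2}=\det(m)/\det(m')\in A^{*}$. Since $A$ is integrally closed in $K$ and $\gamma^{2}\in A^{*}$, we conclude $\gamma\in A^{*}$, and hence $c\in fA$.

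Therefore $m\in\bmattrix{A}{A}{fA}{A}=E(f)$, and since $\det(m)\in A^{*}$, $m$ lies in the unit group $E(f)^{*}=\Gamma_0(f)$. Passing back to $\mathrm{PGL}_2(K)$ yields $n\in\Gamma_0(f)$. The proof uses only the hypothesis that $n$ fixes $\Da_1$ and $\Da_2$; intermediate maximal orders containing $E(f)$ play no role. The only non-trivial step is the integral-closure remark ruling out non-unit scalars $\gamma$, and the rest is a direct matrix computation, so no serious obstacle should arise.
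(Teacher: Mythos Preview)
Your argument is correct and in fact slightly sharper than the paper's: you only use the two ``extremal'' maximal orders $\Da_1=\mathbb{M}_2(A)$ and $\Da_2=g\Da_1g^{-1}$, whereas the paper's proof invokes the full hypothesis. The paper argues locally: fixing a maximal $A$-order $D$ forces $n\in K_Q^{*}D_Q^{*}$ at every finite place $Q$, hence $\det(n)\in K_Q^{*2}\mathcal{O}_Q^{*}$ everywhere, so the divisor of $\det(n)$ is $2D'$ for some $D'$; the PID hypothesis then produces a global $\lambda\in K^{*}$ with $\mathrm{div}(\lambda)=D'$, and after dividing by $\lambda$ one has $\det(n)\in A^{*}$, whence $n\in D^{*}$ for every maximal $D\supseteq E(f)$ and thus $n\in E(f)^{*}$. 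Your route replaces this local--global divisor bookkeeping by a single explicit matrix identity and the observation that $\gamma^{2}\in A^{*}$ with $A$ integrally closed forces $\gamma\in A^{*}$. Both uses of the PID hypothesis are doing the same job (killing the scalar ambiguity in the normalizer), but yours is more elementary and shows that the conclusion already follows from the weaker assumption that $n$ normalizes just $\Da_1$ and $\Da_2$.
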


\begin{proof}
If $n$ fixes a maximal order $\Da_Q$ at a place $Q\neq P$, then 
$n\in K_Q^*\Da_Q^*$, whence
 $\det(n)\in K_Q^{*2}\mathcal{O}_Q^*$. In particular,
if $n$ fixes a maximal $A$-order $D$, then 
$\det(n)\in K_Q^{*2}\mathcal{O}_Q^*$ 
at every finite place. In other words, the principal divisor
$\big(\det(n)\big)$ has the form $2D'$ for some divisor
$D'$. Since $\big(\det(n)\big)$ is principal, its degree is $0$,
and the same holds for $D'$. As $A$ is principal, 
there exists a constant $\lambda$ whose valuation
$\nu_Q(\lambda)$ coincide with the valuation 
$\nu_Q(D')$ at every place $Q\neq P$, and therefore
also at $P$ since both divisors have degree $0$.
We can divide $n$ by the constant $\lambda$
and assume that $\det(n)$ is a unit. 
Then $n$ is a unit of $D$
locally at all places, and therefore also globally. As the order 
$D\supseteq E(f)$ is arbitrary, and an Eichler order
is an intersection of maximal orders, the result follows.
\end{proof}

\begin{proposition}\label{p52b}
When $q=2$ and $f$ is a quadratic polynomial, 
or when $f$ is linear for any $q$,
the WRFQ for $N(f)$ looks 
like the one in the corresponding 
entry of Table \ref{table3}.
\end{proposition}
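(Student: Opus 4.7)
The plan is to derive the WRFQ $N(f)|\mathfrak{t}_P = \mathfrak{wq}_P(\mathbb{O}_{\mathrm{div}(f)})$ by applying a further quotient, by the finite group $G = N(f)/\Gamma_0(f)$, to the graph $\Gamma_0(f)|\mathfrak{t}_P$ determined in Proposition \ref{p52}. Since $\Gamma_0(f)$ is the unit group $E(f)^*$, it is normal in the normalizer $N(f)$, so the resulting covering $\Gamma_0(f)\backslash\backslash \mathfrak{t}_P \twoheadrightarrow N(f)\backslash\backslash \mathfrak{t}_P$ is regular in the sense of \S2, with deck group $G$.

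The first step is to identify $G$ in each of the four cases. Since $A = \mathbb{F}[\mathbf{t}]$ is a PID, Lemma \ref{lem83} implies that $G$ acts faithfully on the finite set $\mathcal{M}(f)$ of maximal orders containing $E(f)$. By the proof of Theorem \ref{t1} (and Proposition \ref{p41}), the elements of $\mathcal{M}(f)$ are, up to conjugacy, the split orders $\Da_{D'}$ with $0 \leq D' \leq \mathrm{div}(f)$, giving $|\mathcal{M}(f)| = 2, 4, 3, 2$ in the four cases. For each divisor $g$ of $f$ the Atkin-Lehner element $w_g = \sbmattrix{0}{-1}{g}{0} \in \mathrm{GL}_2(K)$ normalizes $E(f)$ and acts on $\mathcal{M}(f)$ by a partial swap determined by the primes dividing $g$. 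When $f = \mathbf{t}(\mathbf{t}+1)$, the two commuting involutions $w_\mathbf{t}$ and $w_{\mathbf{t}+1}$ together with $\Gamma_0(f)$ generate $N(f)$, yielding $G \cong (\mathbb{Z}/2\mathbb{Z})^2$; in the three remaining cases only $w_f$ is available modulo $\Gamma_0(f)$, and $G \cong \mathbb{Z}/2\mathbb{Z}$.

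The second step is to translate the $G$-action to the graph $\Gamma_0(f)|\mathfrak{t}_P$ in Table \ref{table2}. The action on type-$0$ vertices is read directly from the action on $\mathcal{M}(f)$; the action on the remaining vertices is then forced because, in each case of Table \ref{table2}, the automorphism group of the reduced graph is small enough that its action is determined by what it does on the type-$0$ vertices (or on a single identified cusp). With the $G$-orbits in hand, one forms the fine quotient and records weights via the usual regular-covering formula: if vertices $v_1, \ldots, v_k$ of $\Gamma_0(f)|\mathfrak{t}_P$ collapse to a single actual vertex $[v]$, the weight from $[v]$ to the class $[w]$ of a neighbor equals $\sum_i m_{v_i, w}$, while edges fixed by an involution in $G$ become half-edges in the WRFQ by the convention in \S2.

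The main obstacle is distinguishing, for each $w_g$, whether a given edge or cusp is reversed or fixed pointwise, since both alternatives are a priori compatible with the vertex-level action. This is resolved by computing the Moebius transformation $z \mapsto -g/z$ explicitly on the visual limits labelling the cusps in Table \ref{table2} (for example $0 \leftrightarrow \infty$, $a \leftrightarrow -g/a$), which pins down how each cusp is mapped and, in particular, whether an Atkin-Lehner involution acts as a genuine reflection on a given cusp or reverses it. Carrying out this computation case by case, and then assembling weights as in the previous paragraph, yields precisely the entries of Table \ref{table3}. \qed
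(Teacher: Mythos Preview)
Your approach is essentially the paper's: use Lemma~\ref{lem83} to bound $G=N(f)/\Gamma_0(f)$ via its faithful action on the maximal orders containing $E(f)$, exhibit explicit Atkin--Lehner generators, compute their action on $\Gamma_0(f)\backslash\mathfrak{t}_P$ through the associated Moebius transformations on the labelled visual limits, and pass to the quotient. Two points need correction, though neither breaks the strategy.

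First, the matrix $\sbmattrix{0}{-1}{g}{0}$ normalizes $E(f)=\sbmattrix{A}{A}{fA}{A}$ only when $g$ is associated to $f$: conjugation sends $E(f)$ to $\sbmattrix{A}{(f/g)A}{gA}{A}$. So for $f=\mathbf{t}(\mathbf{t}+1)$ your proposed $w_{\mathbf{t}}$ and $w_{\mathbf{t}+1}$ do not lie in $N(f)$, and the Moebius map $z\mapsto -g/z$ is the wrong one to test on the visual limits. The paper uses $\sbmattrix{0}{-1}{f}{0}$ together with the genuine partial Atkin--Lehner matrix $\sbmattrix{\mathbf{t}}{1}{\mathbf{t}(\mathbf{t}+1)}{\mathbf{t}}$, whose Moebius transformation $\eta(z)=\frac{\mathbf{t}z+1}{\mathbf{t}(\mathbf{t}+1)z+\mathbf{t}}$ satisfies $\eta(0)=1/\mathbf{t}$ and $\eta(\infty)=1/(\mathbf{t}+1)$, collapsing all four cusps.

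Second, the type-$0$ vertices of $\Gamma_0(f)\backslash\mathfrak{t}_P$ are not the elements of $\mathcal{M}(f)$: for $f=\mathbf{t}(\mathbf{t}+1)$ there are two type-$0$ vertices but $|\mathcal{M}(f)|=4$, and for $f=\mathbf{t}^2$ there are two versus three. So the $G$-action on them is not ``read directly'' from the grid. The paper determines the action on the graph entirely from the Moebius action on the labelled cusps in Table~\ref{table2}; once the cusp orbits are known, the shape of each graph forces the rest.
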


\begin{proof}
It follows from the preceding lemma that an element in 
$N(f)/\Gamma_0(f)$ is completely determined by
its action on the maximal orders containing $E(f)$. 
If the divisor of zeros is
$\mathrm{div}(f)=k_1Q_1+k_2Q_2$, then,
according to \cite[Lemma 4.4]{scffgeo}, these maximal orders 
are in correspondence with the vertices on a 
$(k_1\times k_2)$-grid. This easily generalizes to 
polynomials with more than two prime divisors, 
but we do not need it here.
Action by an element in $N(f)/\Gamma_0(f)$ can be only 
a horizontal or 
vertical flip of the grid, or a combination 
thereof, since it preserves the places $Q_1$ and $Q_2$, 
whence it can 
only have $1$, $2$ or $4$ elements.
The grids for our examples are depicted on the right of 
Figure \ref{grids}.
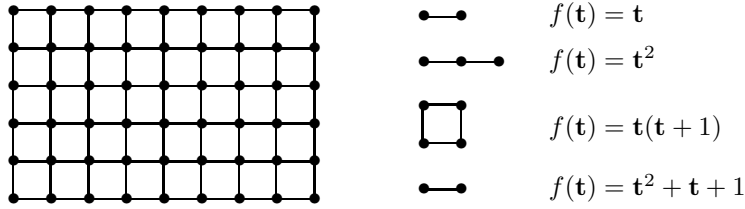
\begin{figure}
\unitlength 1mm 
\linethickness{0.4pt}
\ifx\plotpoint\undefined\newsavebox{\plotpoint}\fi 
\begin{picture}(67,29.5)(30,20)
\put(23.75,22){\framebox(40,25)[]{}}
\put(28.75,22){\line(0,1){25}}\put(33.75,22){\line(0,1){25}}
\put(38.75,22){\line(0,1){25}}
\put(43.75,22){\line(0,1){25}}\put(48.75,22){\line(0,1){25}}
\put(53.75,22){\line(0,1){25}}
\put(58.75,22){\line(0,1){25}}\put(23.75,32){\line(1,0){40}}
\put(23.75,27){\line(1,0){40}}\put(78,46.25){\line(1,0){5}}
\put(23.75,42){\line(1,0){40}}\put(23.75,37){\line(1,0){40}}
\put(23,21.25){$\bullet$}\put(28,21.25){$\bullet$}
\put(33,21.25){$\bullet$}\put(38,21.25){$\bullet$}
\put(43,21.25){$\bullet$}\put(48,21.25){$\bullet$}
\put(53,21.25){$\bullet$}\put(58,21.25){$\bullet$}
\put(63,21.25){$\bullet$}
\put(23,26.25){$\bullet$}\put(28,26.25){$\bullet$}
\put(33,26.25){$\bullet$}\put(38,26.25){$\bullet$}
\put(43,26.25){$\bullet$}\put(48,26.25){$\bullet$}
\put(53,26.25){$\bullet$}\put(58,26.25){$\bullet$}
\put(63,26.25){$\bullet$}
\put(23,31.25){$\bullet$}\put(28,31.25){$\bullet$}
\put(33,31.25){$\bullet$}\put(38,31.25){$\bullet$}
\put(43,31.25){$\bullet$}\put(48,31.25){$\bullet$}
\put(53,31.25){$\bullet$}\put(58,31.25){$\bullet$}
\put(63,31.25){$\bullet$}
\put(23,36.25){$\bullet$}\put(28,36.25){$\bullet$}
\put(33,36.25){$\bullet$}\put(38,36.25){$\bullet$}
\put(43,36.25){$\bullet$}\put(48,36.25){$\bullet$}
\put(53,36.25){$\bullet$}\put(58,36.25){$\bullet$}
\put(63,36.25){$\bullet$}
\put(23,41.25){$\bullet$}\put(28,41.25){$\bullet$}
\put(33,41.25){$\bullet$}\put(38,41.25){$\bullet$}
\put(43,41.25){$\bullet$}\put(48,41.25){$\bullet$}
\put(53,41.25){$\bullet$}\put(58,41.25){$\bullet$}
\put(63,41.25){$\bullet$}
\put(23,46.25){$\bullet$}\put(28,46.25){$\bullet$}
\put(33,46.25){$\bullet$}\put(38,46.25){$\bullet$}
\put(43,46.25){$\bullet$}\put(48,46.25){$\bullet$}
\put(53,46.25){$\bullet$}\put(58,46.25){$\bullet$}
\put(63,46.25){$\bullet$}
\put(78,40.25){\line(1,0){10}}\put(78.25,29.5){\framebox(5,5)[]{}}\put(78,23.25){\line(1,0){5}}
\put(77.5,45.5){$\bullet$}\put(82.5,45.5){$\bullet$}
\put(77.5,39.5){$\bullet$}\put(82.5,39.5){$\bullet$}\put(87.5,39.5){$\bullet$}
\put(77.5,33.5){$\bullet$}\put(82.5,33.5){$\bullet$}\put(77.5,28.5){$\bullet$}\put(82.5,28.5){$\bullet$}
\put(77.5,22.5){$\bullet$}\put(82.5,22.5){$\bullet$}
\put(95,45.5){$f(\mathbf{t})=\mathbf{t}$}\put(95,39.5){$f(\mathbf{t})=\mathbf{t}^2$}\put(95,30.5){$f(\mathbf{t})=\mathbf{t}(\mathbf{t}+1)$}\put(95,22.5){$f(\mathbf{t})=\mathbf{t}^2+\mathbf{t}+1$}
\end{picture}
\caption{A general grid for Eichler orders (left), and the particular grids for 
each examples considered here (right).}\label{grids}
\end{figure}
Generators for the group $N(f)/\Gamma_0(f)$ can be, in general, computed as Atkin-Lehner involutions. For these few cases
we can simply give the explicit matrices. The matrix $\sbmattrix0{-1}f0$ defines a flip in each direction 
(horizontal and vertical), which is the only symmetry if $f\in\{\mathbf{t},\mathbf{t}^2,\mathbf{t}^2+\mathbf{t}+1\}$.
This is equivalent to saying that it transposes the orders
$\sbmattrix AAAA$ and $\sbmattrix A{f^{-1}A}{fA}A$. 
For $f(\mathbf{t})=\mathbf{t}(\mathbf{t}+1)$ we need the additional matrix $$\bmattrix {\mathbf{t}}1{\mathbf{t}(\mathbf{t}+1)}{\mathbf{t}}=
\bmattrix 01{\mathbf{t}}0\bmattrix {(\mathbf{t}+1)}1{\mathbf{t}}1.$$
Since this matrix is congruent to $\sbmattrix1101$ modulo $\mathbf{t}+1$ it fixes
both orders locally at the place $Q_1$ defined by the relation
$\mathrm{div}(\mathbf{t}+1)=Q_1-P$.
Recall that $P$ is the place at infinity.
The factorization on the right hand side shows that it permutes them 
at the place $Q_2$ defined by $\mathrm{div}(t)=Q_2-P$. 
If we check the effect of each matrix on the Bruhat-Tits tree $\mathfrak{t}_P$, we conclude that the cusps at
$0$ and $\infty$ are always in the same orbit. This is enough to compute the quotient when $f\in\{\mathbf{t},\mathbf{t}^2,\mathbf{t}^2+\mathbf{t}+1\}$.
When $f(\mathbf{t})=\mathbf{t}(\mathbf{t}+1)$ we need to use the remaining matrix, which corresponds to the Moebius transformation
$\eta(z)=\frac{\mathbf{t}z+1}{\mathbf{t}(\mathbf{t}+1)z+\mathbf{t}}$. Since $\eta(0)=1/\mathbf{t}$ and $\eta(\infty)=1/(\mathbf{t}+1)$, we conclude that all cusps
are in the same orbit in this case. 
The result follows.
\end{proof}
 
In next section we apply Theorem \ref{t5} to the graphs 
in Table \ref{table3}
to illustrate our main results.

\begin{table}
\[
\begin{tabular}{|l|c|}
  \hline
  $f$ & $N(f)|\mathfrak{t}_P$ \\
\hline  
$\mathbf{t}$ & 
\xygraph{
!{<0cm,0cm>;<.8cm,0cm>:<0cm,.8cm>::}
!{(3,1)}*+{*}="a" 
!{(4,1)}*+{\bullet}="d" 
!{(5,1)}*+{\bullet}="e"
!{(6,1)}*+{\bullet}="f" 
!{(8,1)}*+{\star}="i4"
!{(3.8,1.2)}*+{{}^q}!{(4.2,1.2)}*+{{}^1}!{(4.8,1.2)}*+{{}^q}
!{(5.2,1.2)}*+{{}^1}!{(5.8,1.2)}*+{{}^q}
 "a"-"d" "d"-"e" "e"-"f"  "f"-@{.>}"i4"
} 
\\  \hline
  $\mathbf{t}(\mathbf{t}+1)$ & 
\xygraph{
!{<0cm,0cm>;<.8cm,0cm>:<0cm,.8cm>::}
!{(3,1)}*+{*}="a" 
!{(4,1)}*+{\bullet}="d" 
!{(5,1)}*+{\bullet}="e"
!{(6,1)}*+{\bullet}="f" 
!{(8,1)}*+{\star}="i4"
!{(3.8,1.2)}*+{{}^1}!{(4.2,1.2)}*+{{}^2}!{(4.8,1.2)}*+{{}^2}
!{(5.2,1.2)}*+{{}^1}!{(5.8,1.2)}*+{{}^2}
 "a"-"d" "d"-"e" "e"-"f"  "f"-@{.>}"i4"
} 
\\  \hline
  $\mathbf{t}^2$ &
\xygraph{
!{<0cm,0cm>;<.8cm,0cm>:<0cm,.8cm>::}
!{(2,1)}*+{\bullet}="a" 
!{(3,1)}*+{\bullet}="b"
!{(4,1)}*+{\bullet}="c" 
!{(5,1)}*+{\bullet}="d"
!{(6,1)}*+{\bullet}="e"
!{(1,1)}*+{\star}="i1"
!{(8,1)}*+{\star}="i3"
!{(2.2,1.2)}*+{{}^2}!{(2.8,1.2)}*+{{}^1}!{(3.2,1.2)}*+{{}^2}
!{(3.8,1.2)}*+{{}^1}!{(4.2,1.2)}*+{{}^2}!{(4.8,1.2)}*+{{}^2}
!{(5.2,1.2)}*+{{}^1}!{(5.8,1.2)}*+{{}^2}
 "a"-"b" "b"-"c" "c"-"d" "d"-"e" 
"a"-@{.>}"i1"  "e"-@{.>}"i3" } 
\\ \hline
$\mathbf{t}^2+\mathbf{t}+1$ & 
\xygraph{
!{<0cm,0cm>;<.8cm,0cm>:<0cm,.8cm>::}
!{(2,1)}*+{\bullet}="b" 
!{(3,1)}*+{\bullet}="c"
!{(4,1)}*+{\bullet}="d" 
!{(5,1)}*+{\bullet}="e"
!{(8,1)}*+{\star}="i3"
!{(2.2,1.2)}*+{{}^3}!{(2.8,1.2)}*+{{}^1}!{(3.2,1.2)}*+{{}^2}
!{(3.8,1.2)}*+{{}^2}!{(4.2,1.2)}*+{{}^1}!{(4.8,1.2)}*+{{}^2}
!{(5.2,1.2)}*+{{}^1}
"b"-"c" "c"-"d" "d"-"e" 
"e"-@{.>}"i3" } 
\\  \hline
\end{tabular}
\]
\caption{Some quotient graphs to be used in the main
examples.}\label{table3}
\end{table} 

\section{Some explicit quotient at $Q$}

We start this section by computing, in detail,
the WRFQ associated to the spinor genera of Eichler orders 
of level $2P_0$, where 
$P_0\in\mathbb{P}^1$ has degree 1, 
and $q=2$, at a place $Q$ of degree 
$3$. First we note that Theorem 
\ref{t5} does not apply directly, as the divisor $2P$ is 
not multiplicity free. 
However, the description of cusps in that quotient is still 
explicit enough to
determine unambiguously the vertices in each cusp. In fact, if we label vertices in
the WRFQ in the third entry of Table \ref{table3}
as shown in Figure \ref{f17}, in a way that $d_1,d_2,d_3,\dots$ correspond to the
split orders, then $c$ corresponds to an Eichler order 
contained in two maximal orders
isomorphic to $\mathfrak{D}_{P_\infty}$ and one 
isomorphic to $\mathfrak{D}_0$,
while $u_i$ corresponds, for any $i\geq1$, to an 
Eichler order contained in one
maximal order isomorphic to $\mathfrak{D}_{iP_\infty}$ and two isomorphic to
$\mathfrak{D}_{(i-1)P_\infty}$. If $\mathbf{N}$ is the associated linear map,
corresponding neighborhood matrix $N=N_{P_\infty}(\mathbb{O}_{2P_0})$, then the image, under $\mathbf{N}^3$, of the unitary charge $\delta_v$
supported on a vertex $v$ is contained in the space generated by the unitary charges 
supported on four vertices. The two neighbors of $v$ and the two vertices at distance 
three from $v$ at either side. To fix ideas, assume that $v=c$. There are three
promenades of length $3$ going from 
$c$ to $d_1$. We can go to $d_2$ and then 
take a step back. We can first go to $u_1$ and 
then go directly to $d_1$. Or we can go back 
and forth between $c$ and $d_1$ three times. The first promenade has a total weight of $2$,
since the edge from $c$ to $d_1$ has a weight of 1, the edge from $d_1$ to $d_2$ has 
a weight of 1 and the edge from $d_2$ to $d_1$ has a weight of 2. Similarly,
the second promenade has a weight of 4, 
and the last one a weight of 2. 
This gives a total
weight of 8 for the edge from $c$ to $d_1$.
We perform analogous computation for other 
vertices. If we write $\delta_v$ for the
unitary charge supported on the vertex $v$, the linear map $\mathbf{T}$
corresponding to $N$ satisfies
$$ (\mathbf{T}^3-6\mathbf{T})(\delta_c)=
(8\delta_{d_1}+16\delta_{u_1}+\delta_{d_3}+2\delta_{u_3})
-6(\delta_{d_1}+2\delta_{u_1})=
2\delta_{d_1}+4\delta_{u_1}+\delta_{d_3}+2\delta_{u_3}.$$
Analogous computations give
$$ (\mathbf{T}^3-6\mathbf{T})(\delta_{u_1})=
\delta_{u_4}+2\delta_{u_2}+2\delta_{d_2}+4\delta_{c},\qquad
(\mathbf{T}^3-6\mathbf{T})(\delta_{u_2})=
\delta_{u_5}+4\delta_{d_1}+4\delta_{u_1},$$
$$ (\mathbf{T}^3-6\mathbf{T})(\delta_{d_1})=
\delta_{d_4}+4\delta_{u_2}+4\delta_{c},\qquad
(\mathbf{T}^3-6\mathbf{T})(\delta_{d_2})=
\delta_{d_5}+8\delta_{u_1},$$
and finally
$$ (\mathbf{T}^3-6\mathbf{T})(\delta_{u_i})=
\delta_{u_{i+1}}+8\delta_{u_{i-1}},\qquad
(\mathbf{T}^3-6\mathbf{T})(\delta_{d_i})=
\delta_{d_{i+1}}+8\delta_{d_{i-1}}$$
for $i\geq3$.
\begin{figure}
$$\xygraph{
!{<0cm,0cm>;<.8cm,0cm>:<0cm,.8cm>::}
!{(2,1)}*+{\bullet}="a" !{(2,0.5)}*+{d_2} 
!{(3,1)}*+{\bullet}="b" !{(3,0.5)}*+{d_1} 
!{(4,1)}*+{\bullet}="c0" !{(4,0.5)}*+{c} 
!{(5,1)}*+{\bullet}="d" !{(5,0.5)}*+{u_1} 
!{(6,1)}*+{\bullet}="e" !{(6,0.5)}*+{u_2} 
!{(1,1)}*+{\star}="i1"
!{(8,1)}*+{\star}="i3"
!{(2.2,1.2)}*+{{}^2}!{(2.8,1.2)}*+{{}^1}!{(3.2,1.2)}*+{{}^2}
!{(3.8,1.2)}*+{{}^1}!{(4.2,1.2)}*+{{}^2}!{(4.8,1.2)}*+{{}^2}
!{(5.2,1.2)}*+{{}^1}!{(5.8,1.2)}*+{{}^2}
 "a"-"b" "b"-"c0" "c0"-"d" "d"-"e" 
"a"-@{.>}"i1"  "e"-@{.>}"i3" } $$
\caption{A suitable labeling for the vertices of the WRFQ associated to
Eichler orders of level $2P$ at a degree one place.}\label{f17}
\end{figure}
This gives the graph illustrated in Figure \ref{f19}.
\begin{figure}
$$\xygraph{
!{<0cm,0cm>;<.8cm,0cm>:<0cm,.8cm>::}
!{(4,3)}*+{\bullet}="b0" !{(3.9,2.5)}*+{{}^{u_1}} 
!{(5,3)}*+{\bullet}="d0" !{(5,2.5)}*+{{}^{d_2}} 
!{(3,3)}*+{\star}="i10"
!{(6,3)}*+{\star}="i30"
!{(3.8,3.2)}*+{{}^1} !{(4.2,3.2)}*+{{}^2}
!{(3.4,2.6)}*+{{}^4} !{(4.3,2.6)}*+{{}^2}
!{(4.8,3.2)}*+{{}^8} !{(5.2,3.2)}*+{{}^1}
!{(4,-1)}*+{\bullet}="b1" !{(4,-0.7)}*+{{}^{d_1}} 
!{(5,-1)}*+{\bullet}="d1" !{(5.3,-0.7)}*+{{}^{u_2}} 
!{(3,-1)}*+{\star}="i11"
!{(6,-1)}*+{\star}="i31"
!{(3.8,-1.2)}*+{{}^1} !{(4.2,-1.2)}*+{{}^4}
!{(3.4,-0.7)}*+{{}^4} !{(4.7,-0.7)}*+{{}^4}
!{(4.8,-1.2)}*+{{}^4} !{(5.2,-1.2)}*+{{}^1}
!{(1,1)}*+{\bullet}="b" !{(1,0.5)}*+{{}^{d_3}} 
!{(2,1)}*+{\bullet}="c0" !{(2,0.5)}*+{{}^c} 
!{(3,1)}*+{\bullet}="d" !{(3,0.5)}*+{{}^{u_3}} 
!{(0,1)}*+{\star}="i1"
!{(4,1)}*+{\star}="i3"
!{(0.8,1.2)}*+{{}^1}!{(1.2,1.2)}*+{{}^8} !{(2.2,1.4)}*+{{}^4}
!{(1.8,1.2)}*+{{}^1}!{(2.4,0.75)}*+{{}^2}!{(2.8,1.2)}*+{{}^8}
!{(3.2,1.2)}*+{{}^1}
 "b"-"c0" "c0"-"d" 
"b"-@{.>}"i1"  "d"-@{.>}"i3"
"b0"-"d0" "b0"-@{.>}"i10"  "d0"-@{.>}"i30"
"b1"-"d1" "b1"-@{.>}"i11"  "d1"-@{.>}"i31"
"c0"-"b0" "c0"-"b1" "b0"-"d1"}
\qquad\xygraph{
!{<0cm,0cm>;<.8cm,0cm>:<0cm,.8cm>::}
!{(3,1)}*+{\bullet}="b" !{(3,0.5)}*+{d_i} 
!{(4,1)}*+{\bullet}="c" !{(4,0.5)}*+{d_{i+3}} 
!{(5,1)}*+{\bullet}="d" !{(5,0.5)}*+{d_{i+6}} 
!{(6,1)}*+{\bullet}="e" !{(6,0.5)}*+{d_{i+9}} 
!{(8,1)}*+{\star}="i3"
!{(3.8,3.2)}*+{{}^8}!{(4.2,3.2)}*+{{}^1}!{(4.8,3.2)}*+{{}^8}
!{(5.2,3.2)}*+{{}^1}!{(5.8,3.2)}*+{{}^8}
!{(3,3)}*+{\bullet}="b1" !{(3,2.5)}*+{u_i} 
!{(4,3)}*+{\bullet}="c1" !{(4,2.5)}*+{u_{i+3}} 
!{(5,3)}*+{\bullet}="d1" !{(5,2.5)}*+{u_{i+6}} 
!{(6,3)}*+{\bullet}="e1" !{(6,2.5)}*+{u_{i+9}} 
!{(8,3)}*+{\star}="i31"
!{(3.8,1.2)}*+{{}^8}!{(4.2,1.2)}*+{{}^1}!{(4.8,1.2)}*+{{}^8}
!{(5.2,1.2)}*+{{}^1}!{(5.8,1.2)}*+{{}^8}
!{(4.8,-0.7)}*+{{}_{i=1,2,3}}
 "b"-"c" "c"-"d" "d"-"e" "e"-@{.>}"i3" 
  "b1"-"c1" "c1"-"d1" "d1"-"e1" "e1"-@{.>}"i31" } 
$$
\caption{The WRFQ $\mathfrak{wq}_Q(\mathbb{O}_{2P})$, where $Q$ has degree $3$
and $P$ has degree $1$. Here $X=\mathbb{P}^1$ and $\mathbb{F}=\mathbb{F}_2$. 
The vertex $c$ has two adjacent
edges of weight $2$, but the number $2$ is only written once between them for
the sake of space. The doted lines denote cusps, whose structure
is shown on the right hand side of the picture.}\label{f19}
\end{figure}
Analogous computations produced the graphs in Figure \ref{f20}, using the other entries
in Table \ref{table3}. For $f=\mathbf{t}$ we label the actual
vertices, in order, as $d_1,d_2,\dots$.
For $f=\mathbf{t}(\mathbf{t}+1)$ we use $a,d_1,d_2,\dots$ and for $f=\mathbf{t}^2+\mathbf{t}+1$ is $a,b,d_1,d_2,\dots$.
Note that $d_1,d_2,\dots$ are the vertices in the cusp in every case.
Note that Theorem \ref{t5} applies directly to all three cases.
\begin{figure}
$$\xygraph{
!{<0cm,0cm>;<.8cm,0cm>:<0cm,.8cm>::}
!{(0,2)}*+{(A)}
!{(3,4.2)}*+{*}="b2" !{(5,4.2)}*+{*}="d2" 
!{(3,3)}*+{\bullet}="b0" !{(3.3,3.3)}*+{{}^{d_1}} 
!{(5,3)}*+{\bullet}="d0" !{(5,2.5)}*+{{}^{d_2}} 
!{(1,3)}*+{\star}="i10"
!{(7,3)}*+{\star}="i30"
!{(2.6,3.1)}*+{{}^1} !{(2.5,3.8)}*+{{}^{q^3-q^2}}
!{(5.2,3.8)}*+{{}^{q^2}} !{(3.6,2.75)}*+{{}^{q^2-q}}
!{(4.5,3.2)}*+{{}^{q^3-q^2}} !{(2.7,2.5)}*+{{}^{q-1}}
!{(5.3,3.1)}*+{{}^1}
!{(3,1)}*+{\bullet}="c0" !{(3,0.5)}*+{{}^{d_3}} 
!{(5,1)}*+{\bullet}="d" !{(5,0.5)}*+{{}^{d_6}} 
!{(7,1)}*+{\star}="i3"
!{(3.2,0.7)}*+{{}^1}
!{(2.8,1.2)}*+{{}^{q^3}} !{(4.8,1.2)}*+{{}^{q^3}}
!{(5.2,1.2)}*+{{}^1}
 "c0"-"d"  "d"-@{.>}"i3"
"b0"-"d0" "b0"-@{.>}"i10"  "d0"-@{.>}"i30"
"c0"-"b0" "b0"-"b2" "d0"-"d2"}
\qquad\xygraph{
!{<0cm,0cm>;<.8cm,0cm>:<0cm,.8cm>::}
!{(3,3)}*+{\bullet}="b" !{(3,2.5)}*+{d_i} 
!{(4,3)}*+{\bullet}="c" !{(4,2.5)}*+{d_{i+3}} 
!{(5,3)}*+{\bullet}="d" !{(5,2.5)}*+{d_{i+6}} 
!{(6,3)}*+{\bullet}="e" !{(6,2.5)}*+{d_{i+9}} 
!{(8,3)}*+{\star}="i3"
!{(3.8,3.2)}*+{{}^{q^3}}!{(4.2,3.2)}*+{{}^1}!{(4.8,3.2)}*+{{}^{q^3}}
!{(5.2,3.2)}*+{{}^1}!{(5.8,3.2)}*+{{}^{q^3}} !{(6.2,3.2)}*+{{}^1}
!{(4.8,1.7)}*+{{}_{i=1,2,6}}
 "b"-"c" "c"-"d" "d"-"e" "e"-@{.>}"i3"  } 
$$
$$
(B)\ \xygraph{
!{<0cm,0cm>;<.8cm,0cm>:<0cm,.8cm>::}
!{(2,2)}*+{*}="a1" !{(0,0)}*+{*}="a2"
!{(1,-1)}*+{\bullet}="b1" !{(1,-1.5)}*+{{}^{d_1}} 
!{(2,-1)}*+{\bullet}="d1" !{(2,-1.5)}*+{{}^{d_2}} 
!{(0,-1)}*+{\star}="i11"
!{(3,-1)}*+{\star}="i31"
!{(0.8,-1.2)}*+{{}^1} !{(1.2,-1.2)}*+{{}^2}
!{(0.4,-0.7)}*+{{}^4} !{(2.2,-0.7)}*+{{}^4}
!{(1.8,-1.2)}*+{{}^4} !{(2.2,-1.2)}*+{{}^1}
 !{(1.1,-0.5)}*+{{}^2}
!{(2,1)}*+{\bullet}="c0" !{(1.7,1.2)}*+{{}^a} 
!{(3,1)}*+{\bullet}="d" !{(3,0.5)}*+{{}^{d_3}} 
!{(4,1)}*+{\star}="i3"
 !{(2.2,1.4)}*+{{}^3}
!{(1.7,0.7)}*+{{}^2} !{(1.9,0.4)}*+{{}^2}
!{(2.4,0.75)}*+{{}^2}!{(2.8,1.2)}*+{{}^8}
!{(3.2,1.2)}*+{{}^1}
  "c0"-"d" "d"-@{.>}"i3" "b1"-"c0" "d1"-"c0"
"b1"-"d1" "b1"-@{.>}"i11"  "d1"-@{.>}"i31"
"a1"-"c0" "a2"-"b1"}
\qquad\qquad (C)\ 
\xygraph{
!{<0cm,0cm>;<.8cm,0cm>:<0cm,.8cm>::}
!{(0,0)}*+{\bullet}="a2" !{(-0.3,0)}*+{{}^a}
!{(0.3,0.3)}*+{{}^3} !{(0.5,-0.3)}*+{{}^6}
!{(1,-1)}*+{\bullet}="b1" !{(1,-1.5)}*+{{}^{d_2}} 
!{(2,-1)}*+{\bullet}="d1" !{(2,-1.5)}*+{{}^{d_1}} 
!{(0,-1)}*+{\star}="i11"
!{(3,-1)}*+{\star}="i31"
!{(0.8,-1.2)}*+{{}^1} !{(1.2,-1.2)}*+{{}^4}
!{(0.85,-0.7)}*+{{}^4} !{(2.2,-0.7)}*+{{}^6}
!{(1.8,-1.2)}*+{{}^2} !{(2.2,-1.2)}*+{{}^1}
!{(2,1)}*+{\bullet}="c0" !{(2,1.3)}*+{{}^b} 
!{(3,1)}*+{\bullet}="d" !{(3,0.5)}*+{{}^{d_3}} 
!{(4,1)}*+{\star}="i3"
!{(1.5,0.9)}*+{{}^1} !{(1.9,0.4)}*+{{}^6}
!{(2.4,0.75)}*+{{}^2}!{(2.8,1.2)}*+{{}^8}
!{(3.2,1.2)}*+{{}^1}
  "c0"-"d" "d"-@{.>}"i3"  "d1"-"c0" "c0"-"a2"
"b1"-"d1" "b1"-@{.>}"i11"  "d1"-@{.>}"i31"
"a2"-"b1"}
$$
\caption{The WRFQ $\mathfrak{wq}_Q(\mathbb{O}_{D})$, where $Q$ has degree $3$.
We assume $X=\mathbb{P}^1$ and $\mathbb{F}=\mathbb{F}_q$, with $q=2$ in (B) and (C). 
The divisor is $D=P_1$ in (A), $D=P_1+P_2$ in (B) and $D=P'$ in (C), where
$\deg(P_1)=\deg(P_2)=1$, while $\deg(P')=2$. All cusp in (B) and (C) follow the 
pattern in Figure \ref{f19}.}\label{f20}
\end{figure}

For the final example we let $X$ be the elliptic curve defined over $\mathbb{F}_2$
by the equation 
$$\mathbf{y}^2+ \mathbf{y} + \mathbf{x}^3+ \mathbf{x} +1=0.$$
As usual, $K=\mathbb{F}_2(X)$.
Note that this curve has a unique rational place, the place $P=P_\infty$ at infinity.
In particular, $\mathrm{Pic}(X)\cong\mathbb{Z}$, whence the ring $A$ of rational 
functions that are regular outside $P_\infty$ is a principal ideal domain. 
In particular, if $D$ is a maximal $A$-order, the normalizer of $D$ in 
$\mathrm{GL}_2(K)$ is $K^*D^*$ by 
Lemma \ref{lem83},
whence the C-graph and the S-graph of 
maximal orders
coincide. We conclude that the WRFQ associated 
to the spinor genera $\mathbb{O}_0$
is as depicted in Figure \ref{f21}(A), as such
is the S-graph according to Example 2.4.4 in 
\S II.2.4 of \cite{trees}. Let $V$ be 
the underlying graph, and
let $\mathbb{T}:C_V\rightarrow C_V$ be 
the linear map associated to the WRFQ.
The general Theory tells us that 
$W=\{n_1,n_2,m_1,m_2,d_0\}$ is a shell for 
the obstruction space $O_{\mathbf{T}}$. Let $\mathbf{F}$ be a map with shell $W$ that
commutes with $\mathbf{T}$. 
Let  $\mathbf{P}$ be the projection onto $C_W$ with
kernel $C_{W^c}$, which also commutes with $\mathbf{F}$. 
Then the matrix of $\mathbf{P}\mathbf{T}|_{C_W}$
with respect to the basis $\{\delta_{n_1},\delta_{n_2},\delta_{m_1},
\delta_{m_2},\delta_{d_0}\}$ of $C_W$ is
$$\left(\begin{array}{ccccc}
0&0&1&0&0\\0&0&1&0&0\\3&3&0&2&0\\0&0&1&0&0\\0&0&0&0&0\end{array}\right),$$
which is a diagonalizable matrix with eigenvalues $\pm2\sqrt2$, each with
multiplicity $1$, and $0$ with multiplicity $3$. The map $\mathbf{F}$ must
leave invariant each of the corresponding eigenspaces. In particular,
The eigenvector corresponding to the eigenvalue $2\sqrt2$, namely the charge
$\mu=\delta_{n_1}+\delta_{n_2}+2\sqrt2\delta_{m_1}+\delta_{m_2}$, must also 
be an eigenvector of $\mathbf{F}$. Let $\rho$ be such that
$\mathbf{F}(\mu)=\rho\mu$. Note that $\mathbf{T}(\mu)=2\sqrt2\mu+\delta_{d_1}$,
so a computation gives 
$$\rho2\sqrt2\mu=\rho2\sqrt2\mu+0=
\mathbf{F}(2\sqrt2\mu)+\mathbf{F}(\delta_{d_1})=
\mathbf{F}\mathbf{T}(\mu)=\mathbf{T}\mathbf{F}(\mu)=
\rho2\sqrt2\mu+\rho\delta_{d_1},$$
which can happen only if $\rho=0$. For the same reason, $\mathbf{F}$
annihilates the eigenvector corresponding to $-2\sqrt2$, so its image is contained 
in the null space of $\mathbf{P}\mathbf{T}$. It follows that $\mathbf{F}\mathbf{P}\mathbf{T}=\mathbf{P}\mathbf{T}\mathbf{F}=0$. 
This conditions imply that $\mathbf{F}$ has a matrix of the form
$$\left(\begin{array}{ccccc}
2x_1&2x_2&0&-2(x_1+x_2)&2x_3\\2y_1&2y_2&0&-2(y_1+y_2)&2y_3\\
0&0&0&0&0\\-3(x_1+y_1)&-3(x_2+y_2)&0&3(x_1+x_2+y_1+y_2)&3(x_3+y_3)\\
z_1&z_2&0&-(z_1+z_2)&z_3\end{array}\right).$$
The condition that the columns add up to $0$ reads $z_i=x_i+y_i$,
for $i=1,2,3$. In particular, the set $T$ in Theorem \ref{t2} is
$T=\{n_1,n_2,m_2,d_0\}$. In other words,  the only edges whose weight 
cannot be determined from the WRFQ at $P_\infty$ are those joining two
of these four vertices. If $Q$ is a place of degree $3$, for instance,
the place corresponding to the point $(\alpha,0)$ when $\alpha$ is a root
of $x^3+x+1=0$, the WRFQ at $Q$ must corresponds to a linear map of the 
form $\mathbf{T}^3-6\mathbf{T}+\mathbf{T}'$, where $\mathbf{T}'$ has 
$T$ as a shell. 
The WRFQ corresponding to $\mathbf{T}^3-6\mathbf{T}$ is depicted
in Figure \ref{f21}(B). Since this graph has no edges joining any pair 
of vertices in $T$, the condition that each column of $\mathbf{T}'$
adds up to $1$ implies that  $\mathbf{T}'=0$, since weights cannot
be negative. We conclude that Figure \ref{f21}(B) is indeed the right
picture.\begin{figure}
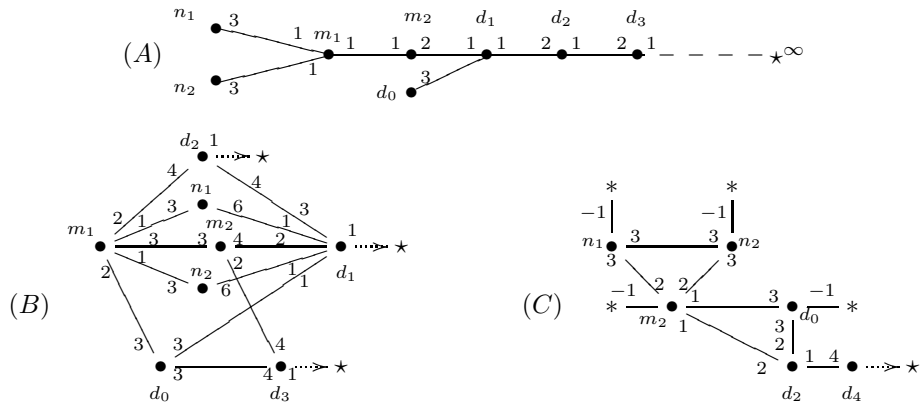

\[
(A)\xygraph{!{<0cm,0cm>;<1cm,0cm>:<0cm,1cm>::}
!{(8,0) }*+{\star^{\infty}}="vl2"
!{(-.2,0) }*+{}="v0"
!{(0,0.5) }*+{{}^{n_1}}
!{(0,-0.5) }*+{{}^{n_2}}
!{(0.35,0.38) }*+{}="v1v"
!{(0.35,-0.38) }*+{}="v2v"
!{(1.95,0) }*+{}="v0v"
!{(1.8,0) }*+{}="v0"
!{(3,0.0) }*+{\bullet}
!{(4,0) }*+{\bullet}
!{(3,-0.5) }*+{\bullet}
!{(5,0) }*+{\bullet}
!{(6,0) }*+{\bullet}
!{(1.9,0) }*+{\bullet}
!{(6.2,0) }*+{}="v7i"
!{(0.4,0.35) }*+{\bullet}
!{(0.4,-0.35) }*+{\bullet}
!{(1.9,0.2) }*+{{}^{m_1}}
!{(3.1,0.45) }*+{{}^{m_2}}
!{(4,0.45) }*+{{}^{d_1}}
!{(2.68,-0.5) }*+{{}_{d_0}}
!{(4.15,0.05) }*+{}="v5v"
!{(2.96,-0.5) }*+{}="v5bv"
!{(5,0.45) }*+{{}^{d_2}}
!{(6,0.45) }*+{{}^{d_3}}
!{(6.2,0) }*+{}="v7"
!{(.65,0.4)}*+{{}^3}!{(.65,-0.5)}*+{{}^3}!{(3.2,-0.35)}*+{{}^3}
!{(2.2,0.1)}*+{{}^1}!{(1.5,0.3)}*+{{}_1}!{(1.7,-0.25)}*+{{}^1}
!{(2.8,.1)}*+{{}^1}!{(3.2,.1)}*+{{}^2}!{(3.8,.1)}*+{{}^1}!{(4.2,.1)}*+{{}^1}
!{(4.8,.1)}*+{{}^2}!{(5.2,.1)}*+{{}^1}!{(5.8,.1)}*+{{}^2}!{(6.2,.1)}*+{{}^1}
"v0"-"v7" "vl2"-@{--}"v7i" "v0v"-"v1v" "v0v"-"v2v" "v5v"-"v5bv" 
}
\]
\[
(B)\ \xygraph{
!{<0cm,0cm>;<.8cm,0cm>:<0cm,.8cm>::}
!{(0,-1)}*+{\bullet}="b1" !{(0,-1.5)}*+{{}^{d_0}} 
!{(2,-1)}*+{\bullet}="d1" !{(2,-1.5)}*+{{}^{d_3}} 
!{(3,-1)}*+{\star}="i31"
!{(0.3,-1.2)}*+{{}^3} !{(0.3,-0.7)}*+{{}^3} 
!{(-0.35,-0.7)}*+{{}^3} !{(2,-0.7)}*+{{}^4}
!{(1.8,-1.2)}*+{{}^4} !{(2.2,-1.2)}*+{{}^1}
 !{(0.7,2.5)}*+{\bullet}="d2" !{(0.5,2.7)}*+{{}^{d_2}} 
 !{(0.2,2.2)}*+{{}^4}!{(1.6,2)}*+{{}^4} !{(0.9,2.7)}*+{{}^1}
!{(-1,1)}*+{\bullet}="c1" !{(-1.3,1.2)}*+{{}^{m_1}} 
!{(-0.1,1.05)}*+{{}^3} !{(-0.3,1.35)}*+{{}^1}
!{(-0.3,0.75)}*+{{}^1} !{(-0.7,1.4)}*+{{}^2}
!{(-0.9,0.5)}*+{{}^2}
!{(1,1)}*+{\bullet}="c2" !{(1,1.3)}*+{{}^{m_2}} 
!{(1.3,1.05)}*+{{}^4} !{(0.7,1.05)}*+{{}^3}
!{(1.3,0.65)}*+{{}^2} 
!{(0.7,1.7)}*+{\bullet}="z1" !{(0.7,1.9)}*+{{}^{n_1}} 
 !{(0.2,1.6)}*+{{}^3}!{(1.3,1.6)}*+{{}^6} 
!{(0.7,0.3)}*+{\bullet}="z2" !{(0.7,0.5)}*+{{}^{n_2}} 
 !{(0.2,0.25)}*+{{}^3}!{(1.1,0.2)}*+{{}^6} 
!{(3,1)}*+{\bullet}="d" !{(3.1,0.5)}*+{{}^{d_1}} 
!{(4,1)}*+{\star}="i3" !{(1.7,2.5)}*+{\star}="i1"
 !{(2.4,1.55)}*+{{}^3}  !{(2.1,1.35)}*+{{}^1}
!{(2.2,0.55)}*+{{}^1} !{(2.4,0.35)}*+{{}^1}
!{(3.2,1.2)}*+{{}^1} !{(2,1.05)}*+{{}^2}
  "c1"-"c2" "c2"-"d" "d"-@{.>}"i3" "d2"-@{.>}"i1" "b1"-"c1" "d1"-"c2"
  "z1"-"c1" "z1"-"d"   "z2"-"c1" "z2"-"d" "c1"-"d2" "d2"-"d" "b1"-"d"
"b1"-"d1" "d1"-@{.>}"i31"} \qquad\qquad
(C)\ \xygraph{
!{<0cm,0cm>;<.8cm,0cm>:<0cm,.8cm>::}
!{(1,1)}*+{\bullet}="n1" !{(0.7,1)}*+{{}^{n_1}} 
!{(3,1)}*+{\bullet}="n2" !{(3.3,1)}*+{{}^{n_2}} 
!{(1,2)}*+{*}="a1" !{(3,2)}*+{*}="a2" 
!{(1,0)}*+{*}="a3" !{(5,0)}*+{*}="a4" 
!{(2,0)}*+{\bullet}="m2" !{(1.7,-0.3)}*+{{}^{m_2}} 
!{(4,0)}*+{\bullet}="d0" !{(4.3,-0.2)}*+{{}^{d_0}} 
!{(4,-1)}*+{\bullet}="d2" !{(4,-1.5)}*+{{}^{d_2}} 
!{(5,-1)}*+{\bullet}="d4" !{(5,-1.5)}*+{{}^{d_4}} 
!{(6,-1)}*+{\star}="i31"
!{(0.7,1.5)}*+{{}^{-1}} !{(2.7,1.5)}*+{{}^{-1}}
!{(1.2,0.2)}*+{{}^{-1}} !{(4.5,0.2)}*+{{}^{-1}}
!{(2.4,0.1)}*+{{}^{1}} !{(3.7,0.1)}*+{{}^{3}}
!{(1.4,1.1)}*+{{}^{3}} !{(2.7,1.1)}*+{{}^{3}}
!{(1,0.7)}*+{{}^{3}} !{(3,0.7)}*+{{}^{3}}
!{(1.8,0.3)}*+{{}^{2}} !{(2.2,0.3)}*+{{}^{2}}
!{(4.3,-0.9)}*+{{}^{1}} !{(4.7,-0.9)}*+{{}^{4}}
!{(2.2,-0.4)}*+{{}^{1}} !{(3.5,-1.1)}*+{{}^{2}}
!{(3.8,-0.4)}*+{{}^{3}} !{(3.8,-0.7)}*+{{}^{2}}
"n1"-"m2" "n2"-"m2" "m2"-"d0" "m2"-"a3" "d0"-"a4"
"d0"-"d2" "m2"-"d2"
"n1"-"a1" "n2"-"a2" "n1"-"n2" "d2"-"d4" "d4"-@{.>}"i31"}
\]
\caption{
In (A) we have the WRFQ $\mathfrak{wq}_P(\mathbb{O}_0)$ at
a point $P$ of degree one,
for an elliptic curve $X$, taken from a reference.
In (B) we have $\mathfrak{wq}_Q(\mathbb{O}_0)$
for a point $Q$ of degree $3$.
In (C) we show a first approximation
for one connected component
in $\mathfrak{wq}_Q(\mathbb{O}_0)$
when $Q$ is a point of degree $2$.
The coefficient $-1$ appearing in some
positions show that it
differ non-trivially from the actual 
graph by some edges joining the four 
upper points.}\label{f21}
\end{figure}

Now assume that $Q$ is a place of degree $2$. Then one connected
component of the WRFQ corresponding to $\mathbf{T}^2-4\mathbf{T}$,
computed analogously, is depicted in Figure 
\ref{f21}(C). Note that the appearance of negative weights at all half
edges, which forces the error term $\mathbf{T}'$ to be nontrivial.
In fact, the fact that weights must be nonnegative forces the inequalities
$2x_1\geq1$, $2y_1\geq-3$ and $0\leq x_1+y_1\leq1$ whence we conclude
$$(x_1,y_1)\in\left\{\left(\frac12,\frac{-1}2\right),
\left(\frac12,\frac12\right),(1,-1),(1,0),
\left(\frac32,\frac{-3}2\right),\left(\frac32,\frac{-1}2\right),(2,-1),\left(\frac52,\frac{-3}2\right)\right\},$$
and similar lists can be given for the remaining columns.

The picture in Page 61 of \cite{Alvetall},
suggest that we have either 
$(x_1,y_1,y_2,x_2,x_3,y_3)=(2,-1,-1,1,1,0)$
or 
$(x_1,y_1,y_2,x_2,x_3,y_3)=(1,-1,-1,2,0,1)$,
depending on the choice of
the particular degree $2$ place $Q$.
Note that the curve $X$ has the automorphism
$(x,y)\mapsto(x,y+1)$ that permutes the two
degree $2$ places, and it can be shown that
this automorphism also permutes the maximal 
orders corresponding to the vertices $n_1$
and $n_2$, as either of them is associated
to a different degree $2$ place. See the 
reference for details.

\section{Acknowledgments}
This work was supported by Fondecyt, Grant No 1180471.

\end{document}